\documentclass[11pt]{amsart}
\usepackage{amsmath, amsthm, amscd, amsfonts, amssymb, graphicx, color}
\usepackage[ colorlinks, plainpages]{hyperref}
\usepackage{tikz}
\usepackage{booktabs}
\usepackage{placeins}
\usepackage{pgfplots}
\usepackage{caption}
\usepackage{algorithm}
\newtheorem{question}{Question}
\usetikzlibrary{automata}
\newtheorem{theorem}{Theorem}[section]
\newtheorem{lemma}[theorem]{Lemma}
\newtheorem{proposition}[theorem]{Proposition}
\newtheorem{corollary}[theorem]{Corollary}
\theoremstyle{definition}
\newtheorem{definition}[theorem]{Definition}
\newtheorem{example}[theorem]{Example}

\theoremstyle{remark}
\newtheorem{remark}[theorem]{Remark}
\numberwithin{equation}{section}
\begin{document}
\title[]{Defect Antichains and Multigraded Symbolic Defect Series of Edge Ideals under Graph Blow-ups}
\author[]{Tabinda Rasheed$^1$, Wang Yao$^{1}$}
\maketitle
{\centering{$^{1}$ School of Mathematics and Statistics, Nanjing University of Information Science and Technology, Nanjing 210044, P.R. China.}\\
\footnotesize{Corresponding author. Email address:
\href{mailto:xx@xx}{tabindarasheed00@gmail.com}, wangyao@nuist.edu.cn}}
\begin{abstract}
In this paper, we study symbolic defect functions of edge ideals through finite antichains of exponent vectors. Let $G$ be a finite simple graph and let $I(G)$ be its edge ideal. For each symbolic degree $s$, we define the symbolic exponent region $\mathcal{P}_s(G)$, the ordinary exponent region $\mathcal{O}_s(G)$, and the symbolic defect antichain $\mathcal{D}_s(G)=\min\big(\mathcal{P}_s(G)\setminus \mathcal{O}_s(G)\big)$, where the minimum is taken with respect to the componentwise partial order. We prove that $\mathcal{D}_s(G)$ gives a finite obstruction set controlling the minimal monomial generators of the quotient $I(G)^{(s)}/I(G)^s$.
Our main result is a blow-up transfer formula. If $G^{\mathbf n}$ is the graph obtained from $G$ by replacing each vertex $v_i$ by an independent set of size $n_i$, then for every $s\geq 1$,
\[
\operatorname{sdefect}(I(G^{\mathbf n}),s)
=
\sum_{\mathbf a\in \mathcal D_s(G)}
\prod_{i=1}^{r}
\binom{a_i+n_i-1}{n_i-1}.
\]
We further refine this formula to a multigraded symbolic defect series, which records the full multidegree distribution of the minimal generators of $I(G^{\mathbf n})^{(s)}/I(G^{\mathbf n})^s$.
As applications, we classify the defect antichains of complete graphs in terms of integer partitions and derive explicit symbolic defect formulas for complete multipartite graphs, complete split graphs, and blow-ups of odd cycles. We also study symbolic defect antichains under graph joins and obtain polynomiality and rational generating-function consequences in the blow-up parameters. The results provide a unified antichain-based framework for symbolic defects of edge ideals and convert several previously case-by-case computations into consequences of a single transfer principle.
\end{abstract}

\noindent\textbf{2020 MSC:}
Primary 13A02; 13F20; 05E40. Secondary 13D02; 05C25.

\medskip
\noindent\textbf{Keywords:}
Symbolic powers; symbolic defect; edge ideals; graph blow-ups; vertex covers; generating functions.
\section{Introduction}
\label{sec:introduction}

Symbolic powers and ordinary powers of ideals are central objects in commutative algebra and algebraic combinatorics. Let $S=K[x_1,\ldots,x_n]$ be a polynomial ring over a field $K$, and let $I\subseteq S$ be a homogeneous ideal. The comparison between the symbolic power \(I^{(s)}\) and the ordinary power \(I^s\) is closely related to containment problems, symbolic Rees algebras, asymptotic invariants, and the structure of minimal generators. Symbolic powers have classical roots in the work of Krull, Zariski, and Nagata, and they continue to play an important role in modern commutative algebra through containment theory and the study of symbolic blow-up algebras \cite{KRU, ZAR, NAG, EIS, DAO}. In the monomial setting, symbolic powers admit polyhedral and combinatorial descriptions, and symbolic Rees algebras are closely connected with vertex cover algebras and affine semigroup methods \cite{HER, GIT, MIL, VILL, COO}.

Edge ideals of finite simple graphs provide a natural framework in which the difference between symbolic and ordinary powers becomes highly combinatorial. If \(G\) is a finite simple graph and \(I(G)\) is its edge ideal, then symbolic powers of \(I(G)\) are governed by vertex-cover inequalities, whereas ordinary powers are governed by products of edges. Thus, the quotient \(I(G)^{(s)}/I(G)^s\) reflects the gap between vertex-cover constraints and edge-incidence decompositions. This connection has been developed through the study of Rees algebras, associated primes, cover ideals, critical graphs, and graph operations \cite{SIM, VIL, FRA, SUL, GU}. Related developments also connect symbolic powers of edge ideals with Waldschmidt constants, resurgence-type invariants, and containment problems \cite{BOCC, COO, MINH}.

The behavior of symbolic powers of edge ideals is also closely connected with homological and asymptotic invariants. The Castelnuovo-Mumford regularity of ordinary powers has been studied for many graph classes, including forests, cycles, and more general families \cite{BAN, BEY}. For symbolic powers, regularity and depth have been investigated for edge ideals, cover ideals, unicyclic graphs, Cameron-Walker graphs, and other special graph classes \cite{JAY, SEY, SEYE, DUNG, MINHC}. Recent work has further emphasized componentwise linearity, regularity comparisons, and symbolic powers of edge ideals in connection with Minh's conjecture \cite{MIN, FICA, FIC, FAK, FAKH}. These results show that powers of edge ideals are naturally studied through a combination of algebraic, asymptotic, and graph-theoretic methods.
A numerical invariant measuring the failure of equality between symbolic and ordinary powers is the symbolic defect
$\operatorname{sdefect}(I,s)=\mu\bigl(I^{(s)}/I^s\bigr)$, where \(\mu\) denotes the minimal number of generators. Thus, while containment problems ask whether symbolic powers lie inside ordinary powers or their variants, the symbolic defect records the minimal new generators appearing in \(I^{(s)}\) but not in \(I^s\). 

Symbolic defect functions have recently attracted attention as invariants with both computational and asymptotic significance. Drabkin and Guerrieri studied symbolic defect functions for ideals with Noetherian symbolic Rees algebra and proved eventual quasi-polynomial behavior in several settings \cite{DRA}. Oltsik developed symbolic defect theory for monomial ideals using symbolic polyhedra and asymptotic methods \cite{OLT}. For edge ideals, exact symbolic defects have been computed for unicyclic graphs and related odd-cycle families \cite{GU, MAN}. These works show that symbolic defect is not merely a numerical byproduct but an invariant encoding how symbolic powers fail to coincide with ordinary powers.
To the best of our knowledge, symbolic defect functions of edge ideals have not previously been studied through finite antichains of exponent obstructions, nor has a general blow-up transfer formula for such antichains been established.

The present paper takes a complementary point of view for edge ideals. Instead of studying only the number \(\operatorname{sdefect}(I(G),s)\), we study the exponent patterns responsible for the quotient \(I(G)^{(s)}/I(G)^s\). For a finite simple graph \(G\), we introduce two exponent regions. The symbolic region \(\mathcal P_s(G)\) consists of exponent vectors satisfying the symbolic-cover inequalities determined by the minimal vertex covers of \(G\). The ordinary region \(\mathcal O_s(G)\) consists of exponent vectors that dominate an \(s\)-fold sum of edge-incidence vectors. Their difference
\[
\mathcal E_s(G)=\mathcal P_s(G)\setminus\mathcal O_s(G)
\]
records exponent vectors that occur symbolically but not ordinarily. We define the symbolic defect antichain
\[
\mathcal D_s(G)=\min_{\leq}\mathcal E_s(G),
\]
where the minimum is taken with respect to the componentwise partial order. Thus \(\mathcal D_s(G)\) is the finite set of minimal exponent obstructions responsible for the minimal monomial generators of \(I(G)^{(s)}/I(G)^s\).

The main theme of this paper is that the antichain \(\mathcal D_s(G)\) is the natural object for studying symbolic defects under graph blow-ups. Let \(G^{\mathbf n}\) be the graph obtained from \(G\) by replacing each vertex \(v_i\) by an independent set of size \(n_i\), and by replacing each edge of \(G\) with all edges between the corresponding independent sets. This operation includes several important graph families as special cases. Complete multipartite graphs are blow-ups of complete graphs, complete split graphs arise as special complete-graph blow-ups, and blow-ups of odd cycles are obtained by taking the base graph to be an odd cycle. Nevertheless, the results of this paper are not restricted to any one of these families: the base graph \(G\) is arbitrary. This viewpoint is closely related to the use of parallelizations and graph blow-ups in the study of symbolic powers of edge ideals \cite{SUL, COOP}.
Our first main result is a blow-up transfer formula for symbolic defects. If \(J=I(G^{\mathbf n})\), then for every \(s\geq 1\),
\[
\operatorname{sdefect}(J,s)
=
\sum_{\mathbf a\in\mathcal D_s(G)}
\prod_i
\binom{a_i+n_i-1}{n_i-1}.
\]
Thus the symbolic defect of every blow-up \(G^{\mathbf n}\) is determined by the defect antichain of the base graph \(G\). The binomial factor counts the number of ways an obstruction vector \(\mathbf a\) can be lifted to monomials in the blown-up variables. This separates the symbolic defect problem into two parts: the base obstruction data \(\mathcal D_s(G)\), which depends only on \(G\) and \(s\), and the blow-up weights, which depend only on the sizes of the blown-up parts.
We further refine this numerical transfer formula to a multigraded defect-generator series. If \(\mathcal M_{J,s}(\mathbf z)\) denotes the generating series of the minimal monomial generators of \(J^{(s)}/J^s\), then
\[
\mathcal M_{J,s}(\mathbf z)
=
\sum_{\mathbf a\in\mathcal D_s(G)}
\prod_i h_{a_i}(z_{i1},\ldots,z_{in_i}),
\]
where \(h_d\) is the complete homogeneous symmetric polynomial of degree \(d\). Consequently, the numerical symbolic defect is obtained by setting all auxiliary variables equal to \(1\). This shows that the blow-up formula is not only a stars-and-bars count; it is the numerical specialization of a multigraded antichain formula that describes the full multigraded distribution of the minimal-defect generators.

A second main contribution is a complete classification of the defect antichains of complete graphs. We prove that \(\mathcal D_s(K_t)\) is governed by integer partition types. More precisely, its elements are exactly the orbits, under permutation of coordinates, of partitions of total degree \(s+p\) whose largest part is \(p\), where \(1\leq p\leq s-1\), and whose largest part occurs at least twice. This gives an all-symbolic-degree description of \(\mathcal D_s(K_t)\). Since $K_{n_1,\ldots,n_t}=K_t^{(n_1,\ldots,n_t)}$, the classification immediately yields all-degree formulas for symbolic defects and multigraded defect series of complete multipartite graphs. This connects the present work with recent studies of symbolic powers of complete graphs and parallelizations of finite simple graphs \cite{COOP}.
The paper also records further consequences of the antichain viewpoint. We describe symbolic and ordinary exponent regions for graph joins and obtain a finite criterion for defect antichains of joins. This shows that the antichain language is compatible with graph operations beyond blow-ups. We also derive polynomiality and rational generating-function consequences in the blow-up parameters. For fixed \(G\) and \(s\), the function
\[
(n_1,\ldots,n_r)\longmapsto
\operatorname{sdefect}(I(G^{\mathbf n}),s)
\]
is polynomial, and the corresponding multivariable generating function is rational. A conditional quasi-polynomial transfer principle is also obtained: whenever weighted counts of the base antichains are eventually quasi-polynomial in \(s\), the symbolic defect functions of all fixed blow-ups inherit this behavior. This connects the present framework with the broader asymptotic study of symbolic defect functions \cite{DRA, MAN, OLT}.

The main contributions of the paper may be summarized as follows:
\begin{enumerate}
\item We introduce symbolic defect antichains
\[
\mathcal D_s(G)=\min_{\leq}
\bigl(\mathcal P_s(G)\setminus\mathcal O_s(G)\bigr)
\]
as finite obstruction sets controlling the quotient \(I(G)^{(s)}/I(G)^s\).

\item We prove a transfer formula for symbolic defects of arbitrary graph blow-ups \(G^{\mathbf n}\).

\item We refine the numerical symbolic defect formula to a multigraded defect-generator series.

\item We classify \(\mathcal D_s(K_t)\) for complete graphs in all symbolic degrees using partition types.

\item We derive applications to complete multipartite graphs, complete split graphs, and blow-ups of odd cycles, and obtain polynomiality and rational generating-function consequences in the blow-up parameters.

\item We show that the antichain framework is compatible with graph joins, providing a finite criterion for symbolic defect antichains under this operation.
\end{enumerate}

The paper is organized as follows. Section~\ref{sec:defect-regions} introduces the symbolic and ordinary exponent regions, defines the symbolic defect antichain, and proves the blow-up transfer formula. Section~\ref{sec:multigraded-defect-series} develops the multigraded defect-generator series. Section~\ref{sec:partition-complete-graphs} classifies the defect antichains of complete graphs. Section~\ref{sec:applications-blowup} gives applications of the transfer formula to complete multipartite graphs, complete split graphs, and blow-ups of odd cycles. Section~\ref{sec:joins} discusses defect regions under graph joins. Section~\ref{sec:asymptotic} proves polynomiality and rational generating-function consequences. The final section contains examples and computations illustrating the transfer principle.

\section{Defect regions and blow-up transfer}\label{sec:defect-regions}

\begin{lemma}\label{lem:min-covers-blowup}
Let \(G\) be a finite simple graph with vertex set $V(G)=\{v_1,\ldots,v_r\}$, and let \(G^{\mathbf n}\) be the \(\mathbf n\)-blow-up of \(G\), where \(v_i\) is replaced by the independent set $X_i=\{x_{i1},\ldots,x_{in_i}\}$.
Then the minimal vertex covers of \(G^{\mathbf n}\) are precisely the sets
\begin{equation}\label{eqsan}
C^{\mathbf n}=\bigcup_{v_i\in C}X_i,
\end{equation}
where \(C\) runs through the minimal vertex covers of \(G\).
\end{lemma}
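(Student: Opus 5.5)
The plan is to prove two things. First, every minimal vertex cover of \(G^{\mathbf n}\) is a union of whole fibres \(X_i\). Second, for sets of that form, being a (minimal) vertex cover of \(G^{\mathbf n}\) is equivalent to the index set being a (minimal) vertex cover of \(G\). Throughout, write \(\pi:V(G^{\mathbf n})\to V(G)\) for the projection \(x_{ij}\mapsto v_i\). By definition of the blow-up, \(x_{ij}x_{kl}\) is an edge of \(G^{\mathbf n}\) exactly when \(v_iv_k\in E(G)\). We assume all \(n_i\geq 1\); this is implicit in the construction, since if some \(n_i=0\) the statement has to be modified.

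\emph{Step 1: covers of the form \(C^{\mathbf n}\).} For \(C\subseteq V(G)\), the set \(C^{\mathbf n}\) is a vertex cover of \(G^{\mathbf n}\) if and only if \(C\) is a vertex cover of \(G\).
\begin{itemize}
\item If \(C\) covers \(G\), then any edge \(x_{ij}x_{kl}\) has \(v_iv_k\in E(G)\), so \(v_i\in C\) or \(v_k\in C\). Hence \(x_{ij}\) or \(x_{kl}\) lies in \(C^{\mathbf n}\).
\item Conversely, if \(C^{\mathbf n}\) covers \(G^{\mathbf n}\) and \(v_iv_k\in E(G)\), then the edge \(x_{i1}x_{k1}\) (which uses \(n_i,n_k\geq1\)) is covered. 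This forces \(v_i\in C\) or \(v_k\in C\).
\end{itemize}
Since \(C\mapsto C^{\mathbf n}\) is inclusion-preserving and injective, minimality also transfers among covers of this form.

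\emph{Step 2: saturation, the key step.} Let \(D\) be any minimal vertex cover of \(G^{\mathbf n}\). We claim that if \(D\cap X_i\neq\emptyset\), then \(X_i\subseteq D\).
\begin{itemize}
\item Pick \(x_{ij}\in D\). By minimality, \(D\setminus\{x_{ij}\}\) is not a cover, so some edge \(x_{ij}x_{kl}\) has \(x_{kl}\notin D\).
\item Now take any \(x_{ij'}\in X_i\). The edge \(x_{ij'}x_{kl}\) exists, because the blow-up edges depend only on \(v_iv_k\in E(G)\). This edge must be covered and \(x_{kl}\notin D\), so \(x_{ij'}\in D\).
\end{itemize}
Hence \(D=C^{\mathbf n}\) with \(C=\pi(D)\).

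\emph{Step 3: conclusion.} By Step 1, \(C=\pi(D)\) is a vertex cover of \(G\). It is minimal: if \(C'\subsetneq C\) were a cover, then \(C'^{\mathbf n}\subsetneq D\) would be a cover, contradicting minimality of \(D\). Conversely, let \(C\) be a minimal cover of \(G\). Then \(C^{\mathbf n}\) is a cover, so it contains some minimal cover \(D\). By Step 2, \(D=C''^{\mathbf n}\) with \(C''\subseteq C\) a cover, so \(C''=C\) and \(C^{\mathbf n}\) is minimal.

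The only real content is the saturation argument in Step 2. It relies on the fact that vertices in the same fibre \(X_i\) have identical neighbourhoods, which the definition of the blow-up guarantees.
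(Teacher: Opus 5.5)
Your proof is correct and follows essentially the same route as the paper. Both arguments show that a minimal cover of $G^{\mathbf n}$ must be a union of whole parts $X_i$ because vertices in the same part have identical neighbourhoods; your private-neighbour argument is the contrapositive of the paper's redundancy argument. Both then transfer the cover and minimality conditions between $C$ and $C^{\mathbf n}$. There are two small differences: you obtain minimality of $C^{\mathbf n}$ indirectly, by saturating a minimal subcover, rather than exhibiting a private edge for each vertex, and you make explicit the assumption $n_i\geq 1$ that the paper uses tacitly.
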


\begin{proof}
Let \(W\) be a vertex cover of \(G^{\mathbf n}\). If \(\{v_i,v_j\}\in E(G)\), then \(W\) must contain all vertices of \(X_i\) or all vertices of \(X_j\); otherwise, choosing one vertex outside \(W\) from each of \(X_i\) and \(X_j\) would give an uncovered edge. Now assume that \(W\) is minimal. The previous observation implies that each
part \(X_i\) is either entirely contained in \(W\) or disjoint from \(W\). Indeed, if \(W\) contains some but not all vertices of \(X_i\), then all neighboring parts of \(X_i\) must be contained in \(W\), so the vertices of \(W\cap X_i\) are redundant, contradicting minimality.
Thus
\[
W=\bigcup_{v_i\in C}X_i
\]
for some subset \(C\subseteq V(G)\). The first paragraph shows that \(C\) is a vertex cover of \(G\), and minimality of \(W\) is equivalent to minimality of \(C\). Hence \(C\) is a minimal vertex cover of \(G\).
Conversely, if \(C\) is a minimal vertex cover of \(G\), then Eq. \eqref{eqsan} covers every edge of \(G^{\mathbf n}\). Minimality follows because for every \(v_i\in C\), minimality of \(C\) gives an edge \(\{v_i,v_j\}\in E(G)\) with \(v_j\notin C\); hence every vertex of \(X_i\) is needed to cover some edge to \(X_j\). Therefore \(C^{\mathbf n}\) is a minimal vertex cover of \(G^{\mathbf n}\).
\end{proof}

\begin{lemma}\label{lem:symbolic-membership-part-degree}
Let \(G\) be a finite simple graph, let \(G^{\mathbf n}\) be its
\(\mathbf n\)-blow-up, and set $S_{\mathbf n}=K[x_{ij}:1\leq i\leq r,\ 1\leq j\leq n_i]$. For a monomial
\[
u=\prod_{i=1}^{r}\prod_{j=1}^{n_i}x_{ij}^{b_{ij}}\in S_{\mathbf n},
\]
define its part-degree vector by $\pi(u)=(a_1,\ldots,a_r)$, $a_i=\sum_{j=1}^{n_i}b_{ij}$.
For \(s\geq 1\), define
\[
\mathcal P_s(G)
=
\left\{
\mathbf a\in\mathbb N^r:
\sum_{v_i\in C}a_i\geq s
\text{ for every minimal vertex cover }C\text{ of }G
\right\}.
\]
Then
\[
u\in I(G^{\mathbf n})^{(s)}
\quad\Longleftrightarrow\quad
\pi(u)\in\mathcal P_s(G).
\]
\end{lemma}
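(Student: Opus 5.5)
The plan is to combine the standard primary decomposition of symbolic powers of squarefree monomial ideals with Lemma~\ref{lem:min-covers-blowup}. For a squarefree monomial ideal $I$ with minimal primes $P_W=(x:x\in W)$, where $W$ ranges over minimal vertex covers, one has
\[
I^{(s)}=\bigcap_{W}P_W^{\,s}.
\]
I will take this as the working definition of the symbolic power. It is the standard one for edge ideals, because every associated prime of $I(G^{\mathbf n})$ is minimal. A monomial $u$ lies in $P_W^{\,s}$ if and only if the total degree of $u$ in the variables of $W$ is at least $s$. Indeed, $P_W^{\,s}$ is generated by the monomials of degree $s$ in the variables indexed by $W$, and a monomial ideal contains $u$ exactly when some generator divides $u$. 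So the first step records
\[
u\in I(G^{\mathbf n})^{(s)}\iff \sum_{x_{ij}\in W}b_{ij}\geq s\ \text{ for every minimal vertex cover } W \text{ of } G^{\mathbf n}.
\]

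The second step applies Lemma~\ref{lem:min-covers-blowup}. The minimal vertex covers $W$ of $G^{\mathbf n}$ are exactly the sets $C^{\mathbf n}=\bigcup_{v_i\in C}X_i$ with $C$ a minimal vertex cover of $G$, so we may replace $W$ by $C^{\mathbf n}$ in the condition above. Since each $X_i$ is either entirely inside $C^{\mathbf n}$ or disjoint from it, the sum regroups by parts:
\[
\sum_{x_{ij}\in C^{\mathbf n}}b_{ij}=\sum_{v_i\in C}\sum_{j=1}^{n_i}b_{ij}=\sum_{v_i\in C}a_i .
\]
Hence the condition holds for all $W$ exactly when $\sum_{v_i\in C}a_i\geq s$ for every minimal vertex cover $C$ of $G$, that is, when $\pi(u)\in\mathcal P_s(G)$. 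Both directions follow at once because Lemma~\ref{lem:min-covers-blowup} gives a bijection $C\mapsto C^{\mathbf n}$ between the two sets of minimal covers.

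The main point needing care is the first step: justifying that membership in $I^{(s)}$ is tested only on minimal primes, each by a degree condition. I would cite the standard fact, for instance from \cite{HER} or \cite{VILL}, that for a squarefree monomial ideal $I^{(s)}=\bigcap_{P\in\operatorname{Min}(I)}P^{s}$ and $P^{s}$ is $P$-primary. This holds because $I$ is radical with no embedded primes, and powers of monomial primes are primary. Given this fact, the rest is bookkeeping. One small remark: if some $n_i$ were $0$ the lemma would still need $X_i$ nonempty, but the blow-up assumes $n_i\geq 1$.
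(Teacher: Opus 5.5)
Your proposal is correct and follows the paper's proof essentially step for step. You write $I(G^{\mathbf n})^{(s)}$ as the intersection of $P_W^s$ over minimal vertex covers $W$, test monomial membership in each $P_W^s$ by total degree in the variables of $W$, and use Lemma~\ref{lem:min-covers-blowup} to rewrite that degree as $\sum_{v_i\in C}a_i$. Your extra justification of the primary decomposition, and your remark that $n_i\geq 1$ is needed for the cover correspondence, are sensible additions rather than a different route.
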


\begin{proof}
Since \(I(G^{\mathbf n})\) is squarefree, its symbolic power is
\[
I(G^{\mathbf n})^{(s)}
=
\bigcap_{W\in\mathcal C(G^{\mathbf n})}P_W^s,
\]
where \(W\) runs through the minimal vertex covers of \(G^{\mathbf n}\).
By Lemma~\ref{lem:min-covers-blowup}, these covers are exactly in \eqref{eqsan}, where $C\in\mathcal C(G)$.
For a fixed minimal cover \(C\) of \(G\), the monomial \(u\) belongs to \(P_{C^{\mathbf n}}^s\) if and only if its total degree in the variables of \(C^{\mathbf n}\) is at least \(s\). This total degree is
\[
\sum_{v_i\in C}\sum_{j=1}^{n_i}b_{ij}
=
\sum_{v_i\in C}a_i.
\]
Therefore \(u\in I(G^{\mathbf n})^{(s)}\) if and only if $\sum_{v_i\in C}a_i\geq s$ for every minimal vertex cover \(C\) of \(G\), which is exactly \(\pi(u)\in\mathcal P_s(G)\).
\end{proof}

\begin{lemma}\label{lem:ordinary-membership-part-degree}
Let \(G\) be a finite simple graph and let \(G^{\mathbf n}\) be its
\(\mathbf n\)-blow-up. Let \(A_G\) denote the vertex-edge incidence matrix of
\(G\). For \(s\geq 1\), define
\[
\mathcal O_s(G)
=
\left\{
\mathbf a\in\mathbb N^r:
\mathbf a\geq A_G\boldsymbol\lambda
\text{ for some } \boldsymbol\lambda\in\mathbb N^{E(G)}
\text{ with } \sum_{e\in E(G)}\lambda_e=s
\right\}.
\]
Then, for every monomial \(u\in S_{\mathbf n}\),
\[
u\in I(G^{\mathbf n})^s
\quad\Longleftrightarrow\quad
\pi(u)\in\mathcal O_s(G).
\]
\end{lemma}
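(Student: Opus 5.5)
We prove the two implications separately. The key observation is that $I(G^{\mathbf n})^s$ is a monomial ideal generated by the products $e_1\cdots e_s$ of $s$ edge monomials of $G^{\mathbf n}$. Hence $u\in I(G^{\mathbf n})^s$ if and only if $u$ is divisible by such a product.

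\emph{Forward direction.} Suppose $u$ is divisible by $w=\prod_{k=1}^s x_{i_kj_k}x_{i'_kj'_k}$, where each factor is an edge of $G^{\mathbf n}$. By the definition of the blow-up, each such edge $\{x_{i_kj_k},x_{i'_kj'_k}\}$ lies over an edge $\{v_{i_k},v_{i'_k}\}$ of $G$; in particular $i_k\neq i'_k$. Let $\lambda_e$ be the number of indices $k$ whose edge lies over $e\in E(G)$. Then $\sum_e\lambda_e=s$ and $\pi(w)=A_G\boldsymbol\lambda$. The map $\pi$ is additive and monotone under divisibility, so $\pi(u)\geq\pi(w)=A_G\boldsymbol\lambda$. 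This gives $\pi(u)\in\mathcal O_s(G)$.

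\emph{Reverse direction.} This is the only step needing care. Assume $\pi(u)=\mathbf a\geq A_G\boldsymbol\lambda$ with $\sum\lambda_e=s$, and set $\mathbf c=A_G\boldsymbol\lambda$. Here $c_i$ is the number of edge slots (counted with multiplicity $\lambda_e$) incident to $v_i$, and $c_i\leq a_i=\sum_j b_{ij}$.

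First, choose for each $i$ an exponent vector $(d_{i1},\ldots,d_{in_i})$ with $0\leq d_{ij}\leq b_{ij}$ and $\sum_j d_{ij}=c_i$. Such a vector exists by greedily filling the coordinates up to $b_{ij}$, which is possible because $c_i\leq a_i$.

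Next, list the $c_i$ incidences at $v_i$ in any order. Assign to them the variables $x_{i1}$ ($d_{i1}$ times), then $x_{i2}$ ($d_{i2}$ times), and so on.

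Each of the $s$ edge slots over $e=\{v_i,v_{i'}\}$ now receives one variable $x_{ij}\in X_i$ and one variable $x_{i'j'}\in X_{i'}$. Since $\{x_{ij},x_{i'j'}\}$ is an edge of $G^{\mathbf n}$ for every choice of $j$ and $j'$, the product of these $s$ edge monomials is a product of $s$ edges of $G^{\mathbf n}$. Its exponent is exactly $(d_{ij})\leq(b_{ij})$. Therefore this product divides $u$, and $u\in I(G^{\mathbf n})^s$.

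The only subtlety is that the assignment must respect the individual exponents $b_{ij}$ and not just the part totals $a_i$. The distribution step handles this, and it works precisely because every vertex in $X_i$ is adjacent to every vertex in $X_{i'}$ whenever $v_iv_{i'}\in E(G)$.
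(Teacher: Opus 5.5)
Your proof is correct and follows the paper's argument. In the forward direction you project the $s$ edge factors to $G$ to obtain $\boldsymbol\lambda$; in the converse you select $c_i$ variable occurrences of $u$ inside each $X_i$ and pair them along the edge slots. Your explicit vector $(d_{ij})$ with $d_{ij}\leq b_{ij}$ simply spells out the paper's step of choosing variables from the multiset of variables of $u$ in $X_i$.
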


\begin{proof}
Suppose first that \(u\in I(G^{\mathbf n})^s\). Then \(u\) is divisible by a
product of \(s\) edge monomials of \(G^{\mathbf n}\). Projecting these \(s\)
edges to \(G\), let \(\lambda_e\) be the number of times the edge
\(e\in E(G)\) occurs. Then
\begin{equation}\label{eqsan1}
\sum_{e\in E(G)}\lambda_e=s.
\end{equation}
The vector \(A_G\boldsymbol\lambda\) records how many selected edge factors use each part \(X_i\). Since the product divides \(u\), these numbers are bounded by the part-degrees of \(u\). Hence $\pi(u)\geq A_G\boldsymbol\lambda$,
so \(\pi(u)\in\mathcal O_s(G)\).
Conversely, suppose \(\pi(u)=\mathbf a\in\mathcal O_s(G)\). Then there exists \(\boldsymbol\lambda\in\mathbb N^{E(G)}\) with Eq. \eqref{eqsan1} and $\mathbf a\geq A_G\boldsymbol\lambda$.
For each \(i\), the \(i\)-th component of \(A_G\boldsymbol\lambda\) is the
number of edge occurrences incident to \(v_i\). Since this number is at most
the total degree of \(u\) in \(X_i\), we may choose the required number of
variables from the multiset of variables appearing in \(u\) inside \(X_i\).
For each occurrence of an edge \(\{v_i,v_j\}\), choose one selected variable
from \(X_i\) and one from \(X_j\). This gives \(s\) edge monomials of
\(G^{\mathbf n}\) whose product divides \(u\). Hence $u\in I(G^{\mathbf n})^s$.
\end{proof}

\begin{definition}\label{def:defect-antichain}
For \(s\geq 1\), define the \(s\)-th symbolic defect region of \(G\) by
\[
\mathcal E_s(G)=\mathcal P_s(G)\setminus\mathcal O_s(G).
\]
The \(s\)-th symbolic defect antichain of \(G\) is
\[
\mathcal D_s(G)=\min_{\leq}\mathcal E_s(G),
\]
where the minimum is taken with respect to the componentwise order on
\(\mathbb N^r\). Thus \(\mathcal D_s(G)\) consists of the minimal exponent
vectors which satisfy the symbolic-cover inequalities but do not dominate an
\(s\)-fold edge-incidence vector.
\end{definition}

\begin{lemma}\label{lem:defect-antichain-minimal-generators}
Let \(G\) be a finite simple graph, let \(G^{\mathbf n}\) be its
\(\mathbf n\)-blow-up, and set $J=I(G^{\mathbf n})$. A monomial \(u\in S_{\mathbf n}\) represents a minimal monomial generator of $J^{(s)}/J^s$ if and only if $\pi(u)\in\mathcal D_s(G)$.
\end{lemma}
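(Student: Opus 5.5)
The plan is to reduce everything to the part-degree map \(\pi\), using Lemmas~\ref{lem:symbolic-membership-part-degree} and~\ref{lem:ordinary-membership-part-degree}, and then compare minimality in \(S_{\mathbf n}\) with minimality in \(\mathbb N^r\).

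First I fix what "minimal monomial generator of \(J^{(s)}/J^s\)" means. Since \(J^s\subseteq J^{(s)}\) are monomial ideals, the quotient is \(\mathbb N^{\sum n_i}\)-graded. Its minimal monomial generators are the monomials \(u\in J^{(s)}\setminus J^s\) such that no proper monomial divisor of \(u\) lies in \(J^{(s)}\setminus J^s\). Equivalently, they are the minimal monomial generators of \(J^{(s)}\) not lying in \(J^s\). I will use the first description.

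Next I record two elementary facts.
\begin{enumerate}
\item Both \(\mathcal P_s(G)\) and \(\mathcal O_s(G)\) are up-closed in \((\mathbb N^r,\leq)\). This is immediate from the defining inequalities. Consequently, if \(\mathbf a\in\mathcal E_s(G)\) and \(\mathbf b\leq\mathbf a\) with \(\mathbf b\in\mathcal P_s(G)\), then \(\mathbf b\notin\mathcal O_s(G)\); otherwise up-closure would force \(\mathbf a\in\mathcal O_s(G)\).
\item The map \(\pi\) is compatible with division. If \(w\mid u\) then \(\pi(w)\leq\pi(u)\), and \(\pi(u/x_{ij})=\pi(u)-\mathbf e_i\). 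Conversely, if \(\mathbf b\leq\pi(u)\), there is a monomial divisor \(w\mid u\) with \(\pi(w)=\mathbf b\), obtained by deleting \(a_i-b_i\) variable occurrences from the \(X_i\)-part of \(u\). Moreover \(w\) is a proper divisor exactly when \(\mathbf b\neq\pi(u)\).
\end{enumerate}

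With these facts the equivalence is a direct translation. By Lemmas~\ref{lem:symbolic-membership-part-degree} and~\ref{lem:ordinary-membership-part-degree}, \(u\in J^{(s)}\setminus J^s\) if and only if \(\pi(u)\in\mathcal E_s(G)\).

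Suppose \(u\) is a minimal generator, and suppose some \(\mathbf b<\pi(u)\) lies in \(\mathcal E_s(G)\). Fact 2 lifts \(\mathbf b\) to a proper divisor \(w\) of \(u\), and by the two lemmas \(w\in J^{(s)}\setminus J^s\). This contradicts minimality, so \(\pi(u)\in\mathcal D_s(G)\).

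Conversely, let \(\pi(u)\in\mathcal D_s(G)\) and let \(w\) be a proper divisor of \(u\). Then \(\pi(w)<\pi(u)\) strictly, because total degree drops. By minimality of \(\pi(u)\) we get \(\pi(w)\notin\mathcal E_s(G)\), hence \(w\notin J^{(s)}\setminus J^s\). So \(u\) is a minimal generator.

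The only point needing care is the precise meaning of minimality in the quotient, i.e.\ that "minimal in \(J^{(s)}\setminus J^s\)" agrees with "minimal generator of \(J^{(s)}\) not in \(J^s\)". Up-closure of \(\mathcal O_s(G)\) handles this. A proper divisor of \(u\) lying in \(J^{(s)}\) cannot lie in \(J^s\), since otherwise \(u\in J^s\). So the two notions coincide, and the count \(\mu(J^{(s)}/J^s)\) is obtained from these \(u\).
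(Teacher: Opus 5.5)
Your proof is correct and takes essentially the same route as the paper. Both arguments reduce membership in $J^{(s)}\setminus J^s$ to the condition $\pi(u)\in\mathcal E_s(G)$ via Lemmas~\ref{lem:symbolic-membership-part-degree} and~\ref{lem:ordinary-membership-part-degree}, and then compare divisibility of monomials with the componentwise order on part-degree vectors. Your Fact 2 states explicitly that every $\mathbf b\leq\pi(u)$ equals $\pi(w)$ for some divisor $w\mid u$; the paper's forward direction uses this only tacitly, so your version is, if anything, slightly more complete.
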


\begin{proof}
By Lemmas~\ref{lem:symbolic-membership-part-degree} and
\ref{lem:ordinary-membership-part-degree}, membership in the symbolic and
ordinary powers of the blow-up ideal is completely determined by the part-degree
vector \(\pi(u)\),
\[
u\in J^{(s)}\setminus J^s
\quad\Longleftrightarrow\quad
\pi(u)\in\mathcal P_s(G)\setminus\mathcal O_s(G)
=
\mathcal E_s(G).
\]
Thus, nonzero monomial classes in \(J^{(s)}/J^s\) are exactly those whose part-degree vectors lie in \(\mathcal E_s(G)\). If \(u\) is minimal in \(J^{(s)}/J^s\), then no proper divisor of \(u\) can have part-degree vector in \(\mathcal E_s(G)\). Hence \(\pi(u)\) is minimal in \(\mathcal E_s(G)\), so $\pi(u)\in\mathcal D_s(G)$.
Conversely, suppose \(\pi(u)\in\mathcal D_s(G)\). Then \(u\in J^{(s)}\setminus J^s\). If a proper divisor \(w\mid u\) also represented a nonzero class in
\(J^{(s)}/J^s\), then $\pi(w)\in\mathcal E_s(G)$ and $\pi(w)\leq\pi(u)$.
By minimality of \(\pi(u)\), we would have \(\pi(w)=\pi(u)\). Since \(w\mid u\)
and both monomials have the same total degree in every part \(X_i\), this
forces \(w=u\), a contradiction. Hence \(u\) is minimal.
\end{proof}

\begin{theorem}[Blow-up transfer formula]\label{thm:blowup-transfer-symbolic-defect}
Let \(G\) be a finite simple graph with vertex set $V(G)=\{v_1,\ldots,v_r\}$, and let \(G^{\mathbf n}\) be its \(\mathbf n\)-blow-up. Set $J=I(G^{\mathbf n})\subseteq S_{\mathbf n}$. Then, for every \(s\geq 1\),
\[
\operatorname{sdefect}(J,s)
=
\sum_{\mathbf a\in\mathcal D_s(G)}
\prod_{i=1}^{r}
\binom{a_i+n_i-1}{n_i-1}.
\]
\end{theorem}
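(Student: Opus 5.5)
The plan is to reduce the count of minimal generators of \(J^{(s)}/J^s\) to a count of monomials with prescribed part-degree vectors, and then count those by stars and bars.

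First we identify \(\operatorname{sdefect}(J,s)\) with a count of monomials. Both \(J^{(s)}\) and \(J^s\) are monomial ideals, so \(J^{(s)}/J^s\) is a \(\mathbb Z^{\sum n_i}\)-graded module. Its minimal generators can be taken to be the images of those monomials \(u\in J^{(s)}\setminus J^s\) that are minimal in this set under divisibility. By graded Nakayama, \(\mu(J^{(s)}/J^s)\) equals the number of such monomials: each multidegree component of \(J^{(s)}/J^s\modulo\mathfrak m\) is at most one-dimensional, and it is nonzero exactly at these minimal monomials. I will state this observation briefly, since it is exactly the notion of ``minimal monomial generator'' used in Lemma~\ref{lem:defect-antichain-minimal-generators}.

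Next we apply Lemma~\ref{lem:defect-antichain-minimal-generators}. A monomial \(u\) is such a minimal generator if and only if \(\pi(u)\in\mathcal D_s(G)\). Hence
\[
\operatorname{sdefect}(J,s)=\#\{u \text{ monomial in } S_{\mathbf n}:\pi(u)\in\mathcal D_s(G)\}=\sum_{\mathbf a\in\mathcal D_s(G)}\#\pi^{-1}(\mathbf a).
\]
This sum is finite because \(\mathcal D_s(G)\) is an antichain in \(\mathbb N^r\), which is finite by Dickson's lemma. The fibres over distinct vectors are disjoint, so the sum counts each monomial exactly once.

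Finally we compute the fibres. A monomial with \(\pi(u)=\mathbf a\) is the same as an independent choice, for each \(i\), of a monomial of degree \(a_i\) in the \(n_i\) variables \(x_{i1},\ldots,x_{in_i}\). By stars and bars there are \(\binom{a_i+n_i-1}{n_i-1}\) such monomials, so \(\#\pi^{-1}(\mathbf a)=\prod_i\binom{a_i+n_i-1}{n_i-1}\), which gives the formula.

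The only delicate point is the first step, namely justifying that \(\mu\) of the quotient equals the number of divisibility-minimal monomials in \(J^{(s)}\setminus J^s\). This follows from the multigrading together with graded Nakayama. The rest is direct bookkeeping from the earlier lemmas.
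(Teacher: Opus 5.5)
Your proof is correct and follows the paper's own argument. You use Lemma~\ref{lem:defect-antichain-minimal-generators} to identify the minimal generators of \(J^{(s)}/J^s\) with the monomials whose part-degree vector lies in \(\mathcal D_s(G)\), and then count each fibre \(\pi^{-1}(\mathbf a)\) by stars and bars; your graded Nakayama remark only makes explicit the identification of \(\mu(J^{(s)}/J^s)\) with the number of divisibility-minimal monomials in \(J^{(s)}\setminus J^s\), which the paper uses without comment.
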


\begin{proof}
By Lemma~\ref{lem:defect-antichain-minimal-generators}, the minimal monomial generators of \(J^{(s)}/J^s\) are precisely the monomials whose part-degree vectors lie in \(\mathcal D_s(G)\). Fix $\mathbf a=(a_1,\ldots,a_r)\in\mathcal D_s(G)$. The number of monomials in the variables of \(X_i\) with total degree \(a_i\) is
\[
\binom{a_i+n_i-1}{n_i-1}.
\]
Since the choices in different parts are independent, the number of monomials
with part-degree vector \(\mathbf a\) is
\[
\prod_{i=1}^{r}
\binom{a_i+n_i-1}{n_i-1}.
\]
Summing over all \(\mathbf a\in\mathcal D_s(G)\) gives the formula.
\end{proof}

\begin{remark}
Theorem~\ref{thm:blowup-transfer-symbolic-defect} applies to the blow-up
\(G^{\mathbf n}\) of an arbitrary finite simple graph \(G\). Thus the ambient
class considered in this paper is not a fixed graph family, but the full
blow-up closure of all finite simple graphs. Complete multipartite graphs,
complete split graphs, and blow-ups of odd cycles are only special cases
obtained by choosing different base graphs.
\end{remark}

\subsection{Defect antichains as obstruction data}
\label{subsec:defect-antichains-obstruction-data}

The symbolic defect antichain should be viewed as obstruction data rather than only as a counting device. For a fixed symbolic degree \(s\), the region $\mathcal P_s(G)$ records the exponent vectors satisfying the symbolic-cover inequalities, while $\mathcal O_s(G)$ records the exponent vectors which dominate an \(s\)-fold product of edge
incidence vectors. Hence, the difference
\[
\mathcal E_s(G)=\mathcal P_s(G)\setminus\mathcal O_s(G)
\]
measures the failure of symbolic and ordinary powers to agree at the level of
exponent vectors.
The antichain $\mathcal D_s(G)=\min_{\leq}\mathcal E_s(G)$ therefore consists of the minimal exponent obstructions responsible for the quotient $I(G)^{(s)}/I(G)^s$.
By Dickson's lemma, \(\mathcal D_s(G)\) is finite.

\begin{proposition}\label{prop:base-defect-antichain-obstruction}
Let \(G\) be a finite simple graph and set $I=I(G)$. Then the minimal monomial generators of $I^{(s)}/I^s$ are precisely the monomials $x^{\mathbf a}=x_1^{a_1}\cdots x_r^{a_r}$ with $\mathbf a\in\mathcal D_s(G)$.
In particular,
\begin{equation}\label{eqsan2}
    \operatorname{sdefect}(I(G),s)=|\mathcal D_s(G)|.
\end{equation}
\end{proposition}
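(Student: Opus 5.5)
The plan is to obtain the proposition as the special case \(\mathbf n=(1,\ldots,1)\) of the results already proved for blow-ups. When every \(n_i=1\), each part is a single vertex, \(X_i=\{x_{i1}\}\). The blow-up \(G^{\mathbf 1}\) is then \(G\) itself after renaming \(x_{i1}\) as \(x_i\), so \(J=I(G^{\mathbf 1})=I(G)\) inside \(S_{\mathbf 1}=K[x_1,\ldots,x_r]\). Moreover, for a monomial \(u=x_1^{b_1}\cdots x_r^{b_r}\) the part-degree vector is just its exponent vector, \(\pi(u)=(b_1,\ldots,b_r)\). So \(\pi\) is a bijection between monomials of \(S_{\mathbf 1}\) and \(\mathbb N^r\), sending \(x^{\mathbf a}\) to \(\mathbf a\).

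With this identification, Lemma~\ref{lem:defect-antichain-minimal-generators} says that a monomial \(u\) is a minimal monomial generator of \(I^{(s)}/I^s\) if and only if \(\pi(u)\in\mathcal D_s(G)\). Since \(u=x^{\pi(u)}\), these generators are exactly the monomials \(x^{\mathbf a}\) with \(\mathbf a\in\mathcal D_s(G)\), which is the first assertion.

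For \eqref{eqsan2}, I would apply Theorem~\ref{thm:blowup-transfer-symbolic-defect} with \(n_i=1\). Every factor is \(\binom{a_i}{0}=1\), so the sum equals \(|\mathcal D_s(G)|\).

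The only point needing care is that \(\operatorname{sdefect}\) counts minimal generators of \(I^{(s)}/I^s\) as a module. Since both ideals are monomial, this quotient is \(\mathbb N^r\)-graded. Its minimal generators can therefore be taken to be the classes of monomials that lie in \(I^{(s)}\setminus I^s\) and are minimal under divisibility among such monomials. The number of these is \(\mu(I^{(s)}/I^s)\) by graded Nakayama. This is the same convention used implicitly in Theorem~\ref{thm:blowup-transfer-symbolic-defect}, so I do not expect a genuine obstacle. Finiteness of the count follows from Dickson's lemma, as already noted.
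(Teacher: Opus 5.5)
Your proposal is correct and follows the paper's proof. Both specialize Lemma~\ref{lem:defect-antichain-minimal-generators} to $\mathbf n=(1,\ldots,1)$, where $\pi$ becomes the exponent-vector map. The only differences are cosmetic: you get the count from Theorem~\ref{thm:blowup-transfer-symbolic-defect} with every binomial factor equal to $1$, whereas the paper counts the indexed generators directly, and your graded Nakayama remark makes explicit a convention the paper leaves implicit.
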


\begin{proof}
This is the special case of Lemma~\ref{lem:defect-antichain-minimal-generators}
with $\mathbf n=(1,\ldots,1)$. In this case every part contains one variable, so the part-degree vector of a
monomial is exactly its exponent vector. Thus the minimal monomial generators of $I^{(s)}/I^s$ are indexed by
$\mathcal D_s(G)$. Counting these generators gives Eq.\eqref{eqsan2}.
\end{proof}

For a blow-up \(G^{\mathbf n}\), the same obstruction vector $\mathbf a=(a_1,\ldots,a_r)\in\mathcal D_s(G)$
does not give only one monomial. Instead, it expands into all monomials whose total degree in the blown-up part \(X_i\) is \(a_i\) for every \(i\). The number of such monomials is
\[
W_{\mathbf n}(\mathbf a)
=
\prod_{i=1}^{r}
\binom{a_i+n_i-1}{n_i-1}.
\]
Thus the blow-up transfer formula can be written as the weighted antichain sum
\[
\operatorname{sdefect}(I(G^{\mathbf n}),s)
=
\sum_{\mathbf a\in\mathcal D_s(G)}
W_{\mathbf n}(\mathbf a).
\]
This shows that the symbolic defect function of a blow-up is controlled by two separate pieces of data: the base obstruction set \(\mathcal D_s(G)\), which depends only on \(G\) and \(s\), and the weight $W_{\mathbf n}(\mathbf a)$,
which depends only on the blow-up sizes. In the next section, this numerical weight is refined to a multigraded generator series by replacing the binomial factor with complete homogeneous symmetric polynomials.
\begin{example}
Let \(G=K_3\) and \(s=2\). Then the minimal vertex covers of \(K_3\) are
\(\{v_1,v_2\}\), \(\{v_1,v_3\}\), and \(\{v_2,v_3\}\). Hence
\[
\mathcal P_2(K_3)=
\left\{
(a_1,a_2,a_3)\in\mathbb N^3:
a_1+a_2\geq 2,\ 
a_1+a_3\geq 2,\ 
a_2+a_3\geq 2
\right\}.
\]
The vector \((1,1,1)\) belongs to \(\mathcal P_2(K_3)\), but it does not belong to \(\mathcal O_2(K_3)\), since a product of two edges of \(K_3\) has total degree \(4\). Moreover, every proper componentwise smaller vector fails one of the cover inequalities. Therefore $\mathcal D_2(K_3)=\{(1,1,1)\}$.
Consequently, $\operatorname{sdefect}(I(K_3),2)=1$.
\end{example}
\subsection{Effective computation and semigroup interpretation}

We next record an effective consequence of the antichain construction. Although the region
\(\mathcal P_s(G)\setminus \mathcal O_s(G)\) may be infinite, its minimal elements lie in a finite box.

\begin{lemma}
Let \(G\) be a finite simple graph with vertex set \(V(G)=\{v_1,\ldots,v_r\}\). Then, for every \(s\geq 1\), $\mathcal D_s(G)\subseteq \{0,1,\ldots,s\}^r$.
\end{lemma}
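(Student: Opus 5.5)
The plan is a proof by contradiction that uses only the minimality of elements of \(\mathcal D_s(G)\) together with the monotonicity of \(\mathcal O_s(G)\). Take \(\mathbf a\in\mathcal D_s(G)\) and suppose some coordinate satisfies \(a_i\geq s+1\). We then look at the vector \(\mathbf a'=\mathbf a-\mathbf e_i\), where \(\mathbf e_i\) is the \(i\)-th unit vector. The aim is to show \(\mathbf a'\in\mathcal E_s(G)\). Since \(\mathbf a'<\mathbf a\), this would contradict the minimality of \(\mathbf a\).

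\emph{Step 1: \(\mathbf a'\notin\mathcal O_s(G)\).} The region \(\mathcal O_s(G)\) is an upward-closed set: if \(\mathbf b\geq A_G\boldsymbol\lambda\) and \(\mathbf c\geq\mathbf b\), then \(\mathbf c\geq A_G\boldsymbol\lambda\). So \(\mathbf a'\in\mathcal O_s(G)\) would force \(\mathbf a\in\mathcal O_s(G)\), which contradicts \(\mathbf a\in\mathcal E_s(G)\).

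\emph{Step 2: \(\mathbf a'\in\mathcal P_s(G)\).} Fix a minimal vertex cover \(C\) of \(G\). If \(v_i\notin C\), the sum \(\sum_{v_j\in C}a'_j\) equals \(\sum_{v_j\in C}a_j\geq s\). If \(v_i\in C\), the sum is at least \(a'_i=a_i-1\geq s\), because all coordinates are nonnegative. Hence every cover inequality still holds for \(\mathbf a'\).

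These two steps give \(\mathbf a'\in\mathcal P_s(G)\setminus\mathcal O_s(G)=\mathcal E_s(G)\), which is the required contradiction. Therefore every coordinate of \(\mathbf a\) is at most \(s\), and \(\mathcal D_s(G)\subseteq\{0,1,\ldots,s\}^r\).

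I do not expect a real obstacle here. The only point that needs care is Step 2, since decreasing a coordinate must not break any cover inequality. The hypothesis \(a_i\geq s+1\) is exactly what makes the single coordinate \(a'_i\) large enough on its own to satisfy every cover containing \(v_i\). I would also note that \(\mathbf a'\in\mathbb N^r\), since \(a_i\geq 1\).
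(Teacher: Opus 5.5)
Your proposal is correct and matches the paper's proof: both assume some \(a_i\geq s+1\), show \(\mathbf a-\mathbf e_i\) still satisfies every cover inequality (unchanged when \(v_i\notin C\), and at least \(a_i-1\geq s\) when \(v_i\in C\)), and use upward closure of \(\mathcal O_s(G)\) to keep \(\mathbf a-\mathbf e_i\) outside \(\mathcal O_s(G)\). This contradicts the minimality of \(\mathbf a\).
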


\begin{proof}
Let \(\mathbf a=(a_1,\ldots,a_r)\in \mathcal D_s(G)\). Suppose that \(a_i\geq s+1\) for some \(i\). We claim that \(\mathbf a-\mathbf e_i\in \mathcal P_s(G)\). Indeed, if \(C\) is a minimal vertex cover not containing \(v_i\), then the corresponding cover inequality is unchanged. If \(v_i\in C\), then
\[
\sum_{v_j\in C} a_j \geq a_i \geq s+1,
\]
and hence after subtracting \(1\) from the \(i\)-th coordinate the sum is still at least \(s\). Thus
\(\mathbf a-\mathbf e_i\in \mathcal P_s(G)\).
If \(\mathbf a-\mathbf e_i\in \mathcal O_s(G)\), then since \(\mathcal O_s(G)\) is upward closed under the componentwise order, we would have \(\mathbf a\in \mathcal O_s(G)\), contradicting \(\mathbf a\in \mathcal P_s(G)\setminus\mathcal O_s(G)\). Therefore $\mathbf a-\mathbf e_i\in \mathcal P_s(G)\setminus\mathcal O_s(G)$, which contradicts the minimality of \(\mathbf a\). Hence, every coordinate of \(\mathbf a\) is at most \(s\).
\end{proof}

This gives the following finite procedure.

\begin{algorithm}
\caption{Finite procedure for computing the symbolic defect antichain \(\mathcal D_s(G)\)}
\label{alg:defect-antichain}
\begin{enumerate}
\item Determine the set \(\mathcal C(G)\) of minimal vertex covers of \(G\).

\item Form the symbolic exponent region
\[
\mathcal P_s(G)=
\left\{
\mathbf a\in\mathbb{N}^r \;:\;
\sum_{v_i\in C} a_i \geq s
\ \text{for every } C\in\mathcal C(G)
\right\}.
\]

\item Form the ordinary exponent region
\[
\mathcal O_s(G)=
\left\{
\mathbf a\in\mathbb{N}^r \;:\;
\mathbf a\geq A_G\boldsymbol\lambda
\ \text{for some } \boldsymbol\lambda\in\mathbb{N}^{E(G)}
\text{ with } \sum_{e\in E(G)}\lambda_e=s
\right\}.
\]

\item Search the finite box \(\{0,1,\ldots,s\}^r\), retain the vectors in
\(\mathcal P_s(G)\setminus\mathcal O_s(G)\), and return the componentwise
minimal elements.
\end{enumerate}
\end{algorithm}

\begin{proposition}
Algorithm~\ref{alg:defect-antichain} terminates and returns \(\mathcal D_s(G)\).
\end{proposition}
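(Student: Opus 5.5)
The plan is to check termination and correctness separately, with the preceding lemma ($\mathcal D_s(G)\subseteq\{0,1,\ldots,s\}^r$) as the key input.

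\emph{Termination.} Each step is a finite computation.
\begin{itemize}
\item Step (1) enumerates the subsets of the finite vertex set $V(G)$, keeps those that cover every edge, and then keeps the inclusion-minimal ones.
\item For a fixed $\mathbf a$ in the box, membership in $\mathcal P_s(G)$ means checking the finitely many inequalities indexed by $\mathcal C(G)$.
\item Membership in $\mathcal O_s(G)$ means searching the finite set of $\boldsymbol\lambda\in\mathbb N^{E(G)}$ with $\sum_e\lambda_e=s$. There are $\binom{s+|E(G)|-1}{|E(G)|-1}$ of these, and for each one we test $\mathbf a\geq A_G\boldsymbol\lambda$ componentwise.
\item The box $\{0,\ldots,s\}^r$ has $(s+1)^r$ elements, so step (4) tests finitely many vectors. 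Extracting the componentwise-minimal elements of a finite set is a finite pairwise comparison.
\end{itemize}
Hence the procedure halts.

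\emph{Correctness.} Let $B=\{0,\ldots,s\}^r$ and let $R=B\cap\mathcal E_s(G)$ be the set retained in step (4). The algorithm returns $\min_{\leq}R$, and I will show $\min_{\leq}R=\mathcal D_s(G)$.

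First, take $\mathbf a\in\mathcal D_s(G)$. By the preceding lemma $\mathbf a\in B$, so $\mathbf a\in R$. Any $\mathbf b\in R$ with $\mathbf b\leq\mathbf a$ lies in $\mathcal E_s(G)$, so minimality of $\mathbf a$ in $\mathcal E_s(G)$ forces $\mathbf b=\mathbf a$. Thus $\mathbf a\in\min_{\leq}R$.

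Conversely, take $\mathbf a\in\min_{\leq}R$, and suppose some $\mathbf b\in\mathcal E_s(G)$ satisfies $\mathbf b\leq\mathbf a$ and $\mathbf b\neq\mathbf a$. The set $\{\mathbf c\in\mathcal E_s(G):\mathbf c\leq\mathbf b\}$ is finite and nonempty, so it contains a minimal element $\mathbf c$. This $\mathbf c$ is minimal in all of $\mathcal E_s(G)$, because anything in $\mathcal E_s(G)$ below $\mathbf c$ is also below $\mathbf b$. So $\mathbf c\in\mathcal D_s(G)\subseteq B$, hence $\mathbf c\in R$. Since $\mathbf c\leq\mathbf b<\mathbf a$, this contradicts the minimality of $\mathbf a$ in $R$. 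Therefore $\mathbf a$ is minimal in $\mathcal E_s(G)$, that is, $\mathbf a\in\mathcal D_s(G)$.

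The main obstacle is this converse direction. A vector minimal within the truncated set $R$ might a priori fail to be minimal in the full region $\mathcal E_s(G)$. This is resolved by two facts: every element of $\mathcal E_s(G)$ dominates some element of $\mathcal D_s(G)$, since $\mathbb N^r$ is well-founded under $\leq$; and the bound $\mathcal D_s(G)\subseteq B$ places all such minimal elements inside the box.
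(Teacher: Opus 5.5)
Your proof is correct and follows the same route as the paper. Termination comes from the finiteness of the box $\{0,\ldots,s\}^r$, together with the finite membership checks for $\mathcal P_s(G)$ and $\mathcal O_s(G)$, and correctness comes from the preceding lemma $\mathcal D_s(G)\subseteq\{0,\ldots,s\}^r$; you write out both inclusions, which the paper leaves implicit. One simplification: the converse direction is not really an obstacle, because the box is downward closed, so any $\mathbf b\in\mathcal E_s(G)$ with $\mathbf b\leq\mathbf a\in R$ already lies in $R$, and minimality of $\mathbf a$ in $R$ then gives $\mathbf b=\mathbf a$ directly, with no need for the well-foundedness argument.
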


\begin{proof}
The algorithm terminates because it searches the finite box \(\{0,1,\ldots,s\}^r\). By the preceding lemma, every element of \(\mathcal D_s(G)\) lies in this box. The algorithm keeps precisely those vectors in \(\mathcal P_s(G)\setminus \mathcal O_s(G)\) and then takes the minimal elements with respect to the componentwise order. This is exactly the definition of $\mathcal D_s(G)=\min_{\leq}\bigl(\mathcal P_s(G)\setminus\mathcal O_s(G)\bigr)$.
\end{proof}

The construction also has a natural semigroup interpretation. Define
\[
\mathcal S(G)=
\left\{
(\mathbf a,s)\in\mathbb N^r\times\mathbb N:
\mathbf a\in\mathcal P_s(G)
\right\}
\]
and
\[
\mathcal R(G)=
\left\{
(\mathbf a,s)\in\mathbb N^r\times\mathbb N:
\mathbf a\in\mathcal O_s(G)
\right\}.
\]
Then \(\mathcal S(G)\) is the exponent semigroup of the symbolic Rees algebra of \(I(G)\), while \(\mathcal R(G)\) is the exponent semigroup of the ordinary Rees algebra of \(I(G)\), after allowing multiplication by monomials of the base ring. Thus, for each fixed \(s\), the antichain \(\mathcal D_s(G)\) is the set of componentwise minimal elements in the \(s\)-th fiber of $\mathcal S(G)\setminus \mathcal R(G)$.
Finally, the multigraded defect series can be interpreted as a Hilbert series of minimal generators. Let \(J=I(G^{\mathbf n})\subseteq S_{\mathbf n}\), let $Q_{J,s}=J^{(s)}/J^s$,
and let \(\mathfrak m_{\mathbf n}\) be the homogeneous maximal ideal of
\(S_{\mathbf n}\). Then $Q_{J,s}/\mathfrak m_{\mathbf n}Q_{J,s}$ is a finite-dimensional multigraded \(K\)-vector space whose homogeneous basis
is given by the minimal monomial generators of \(J^{(s)}/J^s\). Hence
$\mathcal M_{J,s}(\mathbf z)=H_{Q_{J,s}/\mathfrak m_{\mathbf n}Q_{J,s}}(\mathbf z)$, where \(H\) denotes the multigraded Hilbert series. In particular,
\[
\operatorname{sdefect}(J,s)
=
\dim_K\bigl(Q_{J,s}/\mathfrak m_{\mathbf n}Q_{J,s}\bigr)
=
\mathcal M_{J,s}(1,\ldots,1).
\]

\section{Multigraded defect series of graph blow-ups}

\label{sec:multigraded-defect-series}
In this section, we refine the numerical symbolic defect into a multigraded defect-generator series. This refinement shows that the blow-up transfer theorem does not merely count generators; it describes their full multigraded distribution. The numerical symbolic defect is then recovered as a specialization of this series. Let \(G\) be a finite simple graph with vertex set $V(G)=\{v_1,\ldots,v_r\}$, and let $\mathbf n=(n_1,\ldots,n_r)\in\mathbb N^r$. Let \(G^{\mathbf n}\) be the \(\mathbf n\)-blow-up of \(G\), and set
\[
J=I(G^{\mathbf n})\subseteq
S_{\mathbf n}=K[x_{ij}:1\leq i\leq r,\ 1\leq j\leq n_i].
\]
For \(s\geq 1\), let $\mathcal G_s(J)$ denote the set of monomials whose residue classes form the minimal monomial
generators of the quotient $J^{(s)}/J^s$. Equivalently,
\[
\mathcal G_s(J)
=
\left\{
u\in J^{(s)}\setminus J^s:
\text{ no proper divisor of }u\text{ belongs to }J^{(s)}\setminus J^s
\right\}.
\]

For a monomial
\[
u=\prod_{i=1}^{r}\prod_{j=1}^{n_i}x_{ij}^{b_{ij}},
\]
we write
\[
\mathbf z^u
=
\prod_{i=1}^{r}\prod_{j=1}^{n_i}z_{ij}^{b_{ij}},
\]
where $\mathbf z=\{z_{ij}:1\leq i\leq r,\ 1\leq j\leq n_i\}$ is a set of auxiliary variables. We define the multigraded defect series of
\(J\) in symbolic degree \(s\) by
\[
\mathcal M_{J,s}(\mathbf z)
=
\sum_{u\in\mathcal G_s(J)}
\mathbf z^u.
\]

For \(d\geq 0\), let $h_d(z_{i1},\ldots,z_{in_i})$ denote the complete homogeneous symmetric polynomial of degree \(d\) in the variables \(z_{i1},\ldots,z_{in_i}\); that is,
\[
h_d(z_{i1},\ldots,z_{in_i})
=
\sum_{\substack{\beta_1+\cdots+\beta_{n_i}=d\\ \beta_j\geq 0}}
z_{i1}^{\beta_1}\cdots z_{in_i}^{\beta_{n_i}}.
\]
We use the convention $h_0(z_{i1},\ldots,z_{in_i})=1$.

\begin{theorem}[Multigraded defect series of a blow-up]
\label{thm:multigraded-defect-series}
Let \(G\) be a finite simple graph and let \(G^{\mathbf n}\) be its \(\mathbf n\)-blow-up. Set $J=I(G^{\mathbf n})$.
Then, for every \(s\geq 1\),
\begin{equation}\label{eqsan3}
\mathcal M_{J,s}(\mathbf z)
=
\sum_{\mathbf a\in\mathcal D_s(G)}
\prod_{i=1}^{r}
h_{a_i}(z_{i1},\ldots,z_{in_i}),
\end{equation}
where $\mathbf a=(a_1,\ldots,a_r)$.
\end{theorem}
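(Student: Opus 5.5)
The identity \eqref{eqsan3} follows from Lemma~\ref{lem:defect-antichain-minimal-generators}: the generating series over $\mathcal G_s(J)$ splits according to part-degree vectors, and each piece factors into complete homogeneous symmetric polynomials. I would carry this out in three steps.

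\emph{Identify $\mathcal G_s(J)$.} By Lemma~\ref{lem:defect-antichain-minimal-generators},
\[
\mathcal G_s(J)=\{u\in S_{\mathbf n}\ \text{monomial}:\ \pi(u)\in\mathcal D_s(G)\}.
\]
One small point needs checking. The set $\mathcal G_s(J)$ was defined by the condition that no proper divisor lies in $J^{(s)}\setminus J^s$. This matches the notion of a minimal monomial generator of the quotient used in that lemma, whose proof rules out exactly such proper divisors. Hence the lemma applies verbatim.

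\emph{Split the sum.} Consider the fibres $\pi^{-1}(\mathbf a)$ for $\mathbf a\in\mathcal D_s(G)$. These are pairwise disjoint, so
\[
\mathcal M_{J,s}(\mathbf z)=\sum_{\mathbf a\in\mathcal D_s(G)}\ \sum_{u:\ \pi(u)=\mathbf a}\mathbf z^u .
\]
The sum is finite, since $\mathcal D_s(G)$ is finite by Dickson's lemma and each fibre is finite.

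\emph{Factor each fibre.} Fix $\mathbf a\in\mathcal D_s(G)$. A monomial $u$ with $\pi(u)=\mathbf a$ is the same as a choice, for each $i$, of an exponent tuple $(b_{i1},\ldots,b_{in_i})$ with $\sum_j b_{ij}=a_i$. These choices are independent across $i$, and $\mathbf z^u$ factors as the product over $i$ of $\prod_j z_{ij}^{b_{ij}}$. Distributivity then gives
\[
\sum_{\pi(u)=\mathbf a}\mathbf z^u=\prod_{i=1}^r h_{a_i}(z_{i1},\ldots,z_{in_i}),
\]
with $h_0=1$ covering the case $a_i=0$. Summing over $\mathbf a$ yields \eqref{eqsan3}.

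There is no real obstacle here. The only step needing care is the first, where the definition of $\mathcal G_s(J)$ must be matched to Lemma~\ref{lem:defect-antichain-minimal-generators}. As a consistency check, setting every $z_{ij}=1$ gives $h_{a_i}(1,\ldots,1)=\binom{a_i+n_i-1}{n_i-1}$, which recovers Theorem~\ref{thm:blowup-transfer-symbolic-defect}.
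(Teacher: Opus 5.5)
Your proof is correct and follows the paper's argument essentially step for step. You identify $\mathcal G_s(J)$ with the monomials whose part-degree vector lies in $\mathcal D_s(G)$ via Lemma~\ref{lem:defect-antichain-minimal-generators}, split the series over the fibres of $\pi$, and factor each fibre into a product of complete homogeneous symmetric polynomials; your explicit check that the definition of $\mathcal G_s(J)$ matches the minimality notion used in that lemma is a small piece of care the paper leaves implicit.
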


\begin{proof}
By Lemma~\ref{lem:defect-antichain-minimal-generators}, a monomial $u\in S_{\mathbf n}$ belongs to \(\mathcal G_s(J)\) if and only if its part-degree vector satisfies $\pi(u)\in\mathcal D_s(G)$. Therefore,
\[
\mathcal M_{J,s}(\mathbf z)
=
\sum_{\mathbf a\in\mathcal D_s(G)}
\sum_{\substack{u\in S_{\mathbf n}\\ \pi(u)=\mathbf a}}
\mathbf z^u.
\]

Fix $\mathbf a=(a_1,\ldots,a_r)\in\mathcal D_s(G)$. The condition $\pi(u)=\mathbf a$ means that, for each \(i\), the total degree of \(u\) in the variables $x_{i1},\ldots,x_{in_i}$ is exactly \(a_i\). Hence, the contribution from the \(i\)-th part is precisely $h_{a_i}(z_{i1},\ldots,z_{in_i})$. Since the choices in different parts are independent, we obtain
\[
\sum_{\substack{u\in S_{\mathbf n}\\ \pi(u)=\mathbf a}}
\mathbf z^u
=
\prod_{i=1}^{r}
h_{a_i}(z_{i1},\ldots,z_{in_i}).
\]
Summing over all $\mathbf a\in\mathcal D_s(G)$ gives Eq. \eqref{eqsan3} as required.
\end{proof}

\begin{corollary}[Numerical specialization]
\label{cor:numerical-specialization-defect-series}
Let \(J=I(G^{\mathbf n})\). Then $\operatorname{sdefect}(J,s)=\mathcal M_{J,s}(1,\ldots,1)$.

Equivalently,
\[
\operatorname{sdefect}(J,s)
=
\sum_{\mathbf a\in\mathcal D_s(G)}
\prod_{i=1}^{r}
\binom{a_i+n_i-1}{n_i-1}.
\]
\end{corollary}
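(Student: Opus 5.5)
The statement to prove is Corollary~\ref{cor:numerical-specialization-defect-series}. I would deduce both of its equalities directly from Theorem~\ref{thm:multigraded-defect-series}, by specializing every auxiliary variable $z_{ij}$ to $1$.

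For the first equality, recall that $\mathcal M_{J,s}(\mathbf z)=\sum_{u\in\mathcal G_s(J)}\mathbf z^u$ is a finite sum. It is finite because $\mathcal G_s(J)$ is the finite set of minimal monomial generators of $J^{(s)}/J^s$; equivalently, by Lemma~\ref{lem:defect-antichain-minimal-generators} these are the monomials with $\pi(u)$ in the finite antichain $\mathcal D_s(G)$. Setting every $z_{ij}=1$ turns each monomial $\mathbf z^u$ into $1$, so $\mathcal M_{J,s}(1,\ldots,1)=|\mathcal G_s(J)|$.

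It remains to identify $|\mathcal G_s(J)|$ with $\mu(J^{(s)}/J^s)=\operatorname{sdefect}(J,s)$. I would argue this as follows. $J^{(s)}$ and $J^s$ are monomial ideals, so the quotient is $\mathbb N^{\sum n_i}$-graded, and its minimal generators can be chosen as residue classes of monomials. The monomials of $J^{(s)}\setminus J^s$ give a $K$-basis of $J^{(s)}/J^s$. Hence a $K$-basis of $Q_{J,s}/\mathfrak m_{\mathbf n}Q_{J,s}$ consists of those monomials in $J^{(s)}\setminus J^s$ that are not of the form $x_{ij}w$ with $w\in J^{(s)}\setminus J^s$. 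These are exactly the elements of $\mathcal G_s(J)$, by the divisor-closed description of $\mathcal G_s(J)$. Graded Nakayama then gives $\mu=\dim_K(Q_{J,s}/\mathfrak m_{\mathbf n}Q_{J,s})=|\mathcal G_s(J)|$, which matches the Hilbert series interpretation recorded at the end of Section~\ref{sec:defect-regions}.

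For the second equality, I would substitute $z_{ij}=1$ in the right-hand side of \eqref{eqsan3}. The only computation needed is $h_{a_i}(1,\ldots,1)$ with $n_i$ arguments. This equals the number of solutions of $\beta_1+\cdots+\beta_{n_i}=a_i$ in nonnegative integers, which is $\binom{a_i+n_i-1}{n_i-1}$ by stars and bars; the case $a_i=0$ gives $1$, consistent with the convention $h_0=1$. Multiplying over $i$ and summing over $\mathbf a\in\mathcal D_s(G)$ gives the displayed formula, recovering Theorem~\ref{thm:blowup-transfer-symbolic-defect}.

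The only step I expect to need any care is the identification of $\mu(J^{(s)}/J^s)$ with the number of minimal monomial generators. I would settle it with the graded Nakayama argument above; everything else is direct substitution.
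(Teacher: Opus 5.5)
Your proposal is correct and follows the paper's proof: you set every $z_{ij}=1$ to get $\mathcal M_{J,s}(1,\ldots,1)=|\mathcal G_s(J)|=\mu(J^{(s)}/J^s)$, then substitute $h_{a_i}(1,\ldots,1)=\binom{a_i+n_i-1}{n_i-1}$ into Theorem~\ref{thm:multigraded-defect-series}. The only difference is that you justify $|\mathcal G_s(J)|=\mu(J^{(s)}/J^s)$ via the monomial basis of the quotient and graded Nakayama, whereas the paper treats this as immediate from the definition of $\mathcal G_s(J)$.
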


\begin{proof}
The value $\mathcal M_{J,s}(1,\ldots,1)$ counts the number of monomials in \(\mathcal G_s(J)\). Since
\(\mathcal G_s(J)\) is the set of minimal monomial generators of $J^{(s)}/J^s$, this number is $\mu(J^{(s)}/J^s)=
\operatorname{sdefect}(J,s)$.
Moreover,
\[
h_{a_i}(1,\ldots,1)
=
\binom{a_i+n_i-1}{n_i-1}.
\]
The formula follows from Theorem~\ref{thm:multigraded-defect-series}.
\end{proof}

Theorem~\ref{thm:multigraded-defect-series} also has a coarser specialization which records only part-degree vectors rather than the individual variables inside each part. Introduce variables $y_1,\ldots,y_r$ corresponding to the parts \(X_1,\ldots,X_r\), and set $z_{i1}=\cdots=z_{in_i}=y_i$.

\begin{corollary}[Part-degree defect series]
\label{cor:part-degree-defect-series}
Let \(J=I(G^{\mathbf n})\). Define
\[
\mathcal P\mathcal M_{J,s}(y_1,\ldots,y_r)
=
\mathcal M_{J,s}(\mathbf z)\big|_{z_{i1}=\cdots=z_{in_i}=y_i}.
\]
Then
\[
\mathcal P\mathcal M_{J,s}(y_1,\ldots,y_r)
=
\sum_{\mathbf a\in\mathcal D_s(G)}
\left(
\prod_{i=1}^{r}
\binom{a_i+n_i-1}{n_i-1}
\right)
y_1^{a_1}\cdots y_r^{a_r}.
\]
\end{corollary}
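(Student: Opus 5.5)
The plan is to specialize the formula of Theorem~\ref{thm:multigraded-defect-series} directly. By definition, \(\mathcal P\mathcal M_{J,s}(y_1,\ldots,y_r)\) is obtained from \(\mathcal M_{J,s}(\mathbf z)\) by the ring homomorphism sending \(z_{ij}\mapsto y_i\) for all \(j\). This substitution is additive and multiplicative. It therefore commutes with the finite sum over \(\mathbf a\in\mathcal D_s(G)\) in Eq.~\eqref{eqsan3} and with the product over \(i\). Finiteness of \(\mathcal D_s(G)\) follows from Dickson's lemma, or from the box lemma \(\mathcal D_s(G)\subseteq\{0,\ldots,s\}^r\). So it suffices to compute the image of each factor \(h_{a_i}(z_{i1},\ldots,z_{in_i})\).

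The key step is the identity
\[
h_{d}(y,\ldots,y)=\binom{d+n-1}{n-1}\,y^{d}
\]
for \(n\) arguments all equal to \(y\). To see it, note that by the defining sum \(h_d\) is a sum over all \((\beta_1,\ldots,\beta_n)\in\mathbb N^n\) with \(\beta_1+\cdots+\beta_n=d\). Each term \(z_{i1}^{\beta_1}\cdots z_{in}^{\beta_n}\) specializes to \(y^{d}\), so the result is \(y^d\) times the number of weak compositions of \(d\) into \(n\) parts. By stars and bars, that number is \(\binom{d+n-1}{n-1}\). This is the same count already used in Theorem~\ref{thm:blowup-transfer-symbolic-defect} and Corollary~\ref{cor:numerical-specialization-defect-series}. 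For \(d=0\) the identity agrees with the convention \(h_0=1\).

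Multiplying over \(i\) gives \(\prod_i\binom{a_i+n_i-1}{n_i-1}\,y_1^{a_1}\cdots y_r^{a_r}\) for each \(\mathbf a\), and summing over \(\mathcal D_s(G)\) yields the claimed expression.

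There is no real obstacle here. The only point needing care is that the substitution is a well-defined ring homomorphism from the polynomial ring in the \(z_{ij}\) to \(K[y_1,\ldots,y_r]\), which is immediate. As an alternative route, I could bypass \(h_d\) entirely and argue from Lemma~\ref{lem:defect-antichain-minimal-generators}. After the substitution, each minimal generator \(u\) contributes \(y^{\pi(u)}\). Grouping the generators by \(\pi(u)\in\mathcal D_s(G)\), the monomials with a fixed part-degree vector \(\mathbf a\) number exactly \(\prod_i\binom{a_i+n_i-1}{n_i-1}\), which gives the same formula.
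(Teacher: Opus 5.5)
Your proposal is correct and follows the paper's proof: substitute $z_{i1}=\cdots=z_{in_i}=y_i$ into the formula of Theorem~\ref{thm:multigraded-defect-series}, then use $h_{a_i}(y_i,\ldots,y_i)=\binom{a_i+n_i-1}{n_i-1}y_i^{a_i}$, which you justify by stars and bars where the paper simply asserts it. Your remarks on the substitution being a ring homomorphism and on the finiteness of $\mathcal D_s(G)$ are fine but not needed beyond what the paper uses.
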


\begin{proof}
By Theorem~\ref{thm:multigraded-defect-series},
\[
\mathcal M_{J,s}(\mathbf z)
=
\sum_{\mathbf a\in\mathcal D_s(G)}
\prod_{i=1}^{r}
h_{a_i}(z_{i1},\ldots,z_{in_i}).
\]
After setting
\[
z_{i1}=\cdots=z_{in_i}=y_i,
\]
we have
\[
h_{a_i}(y_i,\ldots,y_i)
=
\binom{a_i+n_i-1}{n_i-1}y_i^{a_i}.
\]
Therefore
\[
\mathcal P\mathcal M_{J,s}(y_1,\ldots,y_r)
=
\sum_{\mathbf a\in\mathcal D_s(G)}
\left(
\prod_{i=1}^{r}
\binom{a_i+n_i-1}{n_i-1}
\right)
y_1^{a_1}\cdots y_r^{a_r}.
\]
\end{proof}

\begin{corollary}[Base graph specialization]
\label{cor:base-graph-antichain-enumerator}
Let $\mathbf n=(1,\ldots,1)$. Then $G^{\mathbf n}=G$. If \(I=I(G)\), then
\[
\mathcal M_{I,s}(z_1,\ldots,z_r)
=
\sum_{\mathbf a\in\mathcal D_s(G)}
z_1^{a_1}\cdots z_r^{a_r}.
\]
In particular, $\operatorname{sdefect}(I(G),s)=|\mathcal D_s(G)|$.
\end{corollary}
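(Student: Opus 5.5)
The plan is to specialize Theorem~\ref{thm:multigraded-defect-series} to the trivial blow-up $\mathbf n=(1,\ldots,1)$. First I will check that $G^{\mathbf n}=G$ in this case. Each part $X_i$ is the single vertex $x_{i1}$. The blow-up replaces each edge $\{v_i,v_j\}$ by all edges between $X_i$ and $X_j$, which here is the single edge $\{x_{i1},x_{j1}\}$. So identifying $x_{i1}$ with $x_i$ gives an isomorphism $G^{\mathbf n}\cong G$, under which $S_{\mathbf n}=S$, $J=I(G)=I$, and $z_{i1}=z_i$.

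Next I apply Eq.~\eqref{eqsan3}. With $n_i=1$, the complete homogeneous symmetric polynomial in one variable is $h_{a_i}(z_i)=z_i^{a_i}$, since the only composition of $a_i$ into one part is $a_i$ itself; this also covers $a_i=0$ via the convention $h_0=1$. Each summand $\prod_i h_{a_i}$ therefore becomes $z_1^{a_1}\cdots z_r^{a_r}$, which is the displayed formula.

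For the numerical statement, I set $z_1=\cdots=z_r=1$, so each term contributes $1$ and the sum equals $|\mathcal D_s(G)|$. Corollary~\ref{cor:numerical-specialization-defect-series} identifies this value with $\operatorname{sdefect}(I,s)$. Equivalently, every binomial factor becomes $\binom{a_i}{0}=1$, which agrees with Proposition~\ref{prop:base-defect-antichain-obstruction}.

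There is no real obstacle. The only point needing care is the identification $G^{\mathbf n}=G$ together with the matching relabelling of variables and ideals, which is immediate from the definition of the blow-up.
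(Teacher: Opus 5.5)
Your proposal is correct and follows the paper's proof: specialize Theorem~\ref{thm:multigraded-defect-series} to $\mathbf n=(1,\ldots,1)$, use $h_{a_i}(z_i)=z_i^{a_i}$, and evaluate at $z_1=\cdots=z_r=1$, which counts the minimal generators as in Corollary~\ref{cor:numerical-specialization-defect-series}. Your explicit verification that $G^{\mathbf n}\cong G$, with the matching relabelling of variables and ideals, is a detail the paper leaves implicit.
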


\begin{proof}
When $\mathbf n=(1,\ldots,1)$, each part contains exactly one variable. Therefore $h_{a_i}(z_i)=z_i^{a_i}$.
The formula follows immediately from
Theorem~\ref{thm:multigraded-defect-series}. Setting $z_1=\cdots=z_r=1$ gives $\operatorname{sdefect}(I(G),s)=|\mathcal D_s(G)|$.
\end{proof}

\begin{remark}
The multigraded defect series shows that the symbolic defect antichain is a finer invariant than the numerical symbolic defect. The number $\operatorname{sdefect}(J,s)$ only records the cardinality of the minimal defect generators. By contrast, $\mathcal M_{J,s}(\mathbf z)$ records their full multigraded distribution. Thus, the blow-up transfer theorem is not merely a stars-and-bars count; it is the numerical specialization of a
multigraded antichain formula.
\end{remark}

\begin{corollary}[Complete multipartite specialization]
\label{cor:complete-multipartite-defect-series}
Let $J=I(K_{n_1,\ldots,n_t})$. Then, for every \(s\geq 1\),
\[
\mathcal M_{J,s}(\mathbf z)
=
\sum_{\mathbf a\in\mathcal D_s(K_t)}
\prod_{i=1}^{t}
h_{a_i}(z_{i1},\ldots,z_{in_i}).
\]
Using the partition classification of $\mathcal D_s(K_t)$, this becomes
\[
\mathcal M_{J,s}(\mathbf z)
=
\sum_{p=1}^{s-1}
\ \sum_{\lambda\in\Lambda_{s,p}^{(t)}}
\ \sum_{\mathbf a\in\operatorname{Orb}_t(\lambda)}
\prod_{i=1}^{t}
h_{a_i}(z_{i1},\ldots,z_{in_i}).
\]
\end{corollary}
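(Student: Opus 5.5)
The first formula is Theorem~\ref{thm:multigraded-defect-series} applied with base graph $G=K_t$. The plan is to first check that $K_{n_1,\ldots,n_t}$ really is the $\mathbf n$-blow-up $K_t^{\mathbf n}$ with $\mathbf n=(n_1,\ldots,n_t)$. In the blow-up, each vertex $v_i$ of $K_t$ is replaced by an independent set $X_i$ of size $n_i$. Since every pair $v_i,v_j$ with $i\neq j$ is adjacent in $K_t$, every pair of parts is joined completely. So $K_t^{\mathbf n}$ has parts $X_1,\ldots,X_t$, no edges inside a part, and all edges between distinct parts. That is exactly $K_{n_1,\ldots,n_t}$. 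With this identification, $J=I(K_t^{\mathbf n})$, and Theorem~\ref{thm:multigraded-defect-series} gives the first display verbatim.

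For the second display, I would rewrite the index set $\mathcal D_s(K_t)$ using the partition classification from Section~\ref{sec:partition-complete-graphs}, which I take as available. In the introduction it is stated as follows: $\mathcal D_s(K_t)$ is the disjoint union, over $1\le p\le s-1$ and over partition types $\lambda\in\Lambda^{(t)}_{s,p}$, of the coordinate-permutation orbits $\operatorname{Orb}_t(\lambda)$. Here $\Lambda^{(t)}_{s,p}$ consists of the partitions with at most $t$ parts, padded by zeros to length $t$, of total $s+p$, with largest part $p$ occurring at least twice. Given this, the sum over $\mathbf a\in\mathcal D_s(K_t)$ splits as the triple sum in the statement. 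The summand $\prod_i h_{a_i}(z_{i1},\ldots,z_{in_i})$ depends only on $\mathbf a$, so regrouping a finite sum is legitimate.

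The only point needing care is that this decomposition is a genuine partition of $\mathcal D_s(K_t)$, so that no vector is counted twice. This is the step I expect to be the main obstacle. Distinct pairs $(p,\lambda)$ give disjoint orbits: $\lambda$ is recovered from any $\mathbf a$ in its orbit by sorting the entries in decreasing order, and then $p$ is the largest entry, or equivalently $|\mathbf a|-s$. Within a single orbit, $\operatorname{Orb}_t(\lambda)$ is a set of distinct vectors, so summing over it counts each $\mathbf a$ exactly once. Note that this holds even when the stabilizer of $\lambda$ is nontrivial, since we sum over the orbit itself and not over the symmetric group. Together these give
\[
\sum_{\mathbf a\in\mathcal D_s(K_t)}\prod_{i=1}^{t}h_{a_i}
=\sum_{p=1}^{s-1}\sum_{\lambda\in\Lambda^{(t)}_{s,p}}\sum_{\mathbf a\in\operatorname{Orb}_t(\lambda)}\prod_{i=1}^{t}h_{a_i},
\]
which completes the argument.
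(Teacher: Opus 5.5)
Your proposal is correct and is essentially the paper's proof. You identify $K_{n_1,\ldots,n_t}$ with $K_t^{\mathbf n}$, apply Theorem~\ref{thm:multigraded-defect-series} to the base graph $K_t$, and substitute the partition classification of Theorem~\ref{thm:partition-classification-complete-graph}. Your added check that distinct pairs $(p,\lambda)$ give disjoint orbits (recover $\lambda$ by sorting the entries, and $p$ as the maximum entry) is a detail the paper leaves implicit, since that theorem is stated only as a union, and it is exactly what justifies regrouping the sum.
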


\begin{proof}
Since $K_{n_1,\ldots,n_t}=K_t^{\mathbf n}$, the first formula follows from Theorem~\ref{thm:multigraded-defect-series} applied to the base graph \(K_t\). The second formula follows by substituting the partition classification of
\(\mathcal D_s(K_t)\).
\end{proof}

\begin{remark}
Corollary~\ref{cor:complete-multipartite-defect-series} upgrades the numerical
symbolic defect formula for complete multipartite graphs to a multigraded
formula for the minimal defect generators. The numerical formula is recovered
by setting all auxiliary variables equal to \(1\).
\end{remark}

\section{Partition classification for complete graphs}
\label{sec:partition-complete-graphs}

In this section we classify the symbolic defect antichains of complete graphs in
all symbolic degrees. This classification is useful because every complete
multipartite graph is a blow-up of a complete graph. Thus the results of this
section provide the base antichain data needed for the applications in the next
section.
Let $K_t$ be the complete graph on the vertex set $V(K_t)=\{v_1,\ldots,v_t\}$. For a vector $\mathbf a=(a_1,\ldots,a_t)\in\mathbb N^t$, write $|\mathbf a|=a_1+\cdots+a_t$ and $\max(\mathbf a)=\max\{a_1,\ldots,a_t\}$.

\subsection{Symbolic and ordinary regions of \(K_t\)}

We first record explicit descriptions of the symbolic and ordinary exponent
regions of \(K_t\).

\begin{lemma}\label{lem:complete-graph-regions}
Let \(t\geq 2\) and \(s\geq 1\). Then
\[
\mathcal P_s(K_t)
=
\left\{
\mathbf a\in\mathbb N^t:
|\mathbf a|-a_i\geq s
\text{ for every } i=1,\ldots,t
\right\}.
\]
Moreover,
\[
\mathcal O_s(K_t)
=
\left\{
\mathbf a\in\mathbb N^t:
\sum_{i=1}^{t}\min\{a_i,s\}\geq 2s
\right\}.
\]
\end{lemma}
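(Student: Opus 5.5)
The first claim reduces to identifying the minimal vertex covers of $K_t$. I would show that these are exactly the sets $V(K_t)\setminus\{v_i\}$ for $i=1,\ldots,t$. Such a set is a cover, since every edge has at least one endpoint different from $v_i$. If a vertex set misses two vertices $v_i,v_j$, then the edge $\{v_i,v_j\}$ is uncovered. Hence every cover has at least $t-1$ vertices, and each $(t-1)$-set is minimal. Substituting these covers into the definition of $\mathcal P_s(G)$ from Lemma~\ref{lem:symbolic-membership-part-degree} gives the condition $|\mathbf a|-a_i\ge s$ for every $i$.

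For $\mathcal O_s(K_t)$, the key reformulation is as follows. A vector $\mathbf b=A_{K_t}\boldsymbol\lambda$ with $\sum\lambda_e=s$ is exactly the degree sequence of a loopless multigraph on $t$ vertices with $s$ edges. Such vectors are characterized by
\[
|\mathbf b|=2s \quad\text{and}\quad b_i\le s \ \text{for all } i.
\]
Necessity is clear: the degrees sum to $2s$, and each edge contributes at most one to a given vertex, so $b_i\le s$. For sufficiency I would use a greedy argument. Sort so that $b_1\ge b_2\ge\cdots$ and join a vertex of largest degree to a vertex of second largest degree, decreasing both by one. After this step $|\mathbf b|=2(s-1)$. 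The bound $b_i\le s-1$ must still hold. A vertex with $b_i=s$ other than the two used cannot remain, because two vertices of degree $s$ would already exhaust the total $2s$. Induction on $s$ then finishes the argument. This characterization is the main step, and I expect the care to lie in checking the inequality preservation in the greedy step.

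With this characterization, suppose first $\mathbf a\ge\mathbf b$ with $\mathbf b$ as above. Then $\min\{a_i,s\}\ge b_i$, so $\sum_i\min\{a_i,s\}\ge 2s$.

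Conversely, suppose $\sum_i\min\{a_i,s\}\ge 2s$, and put $c_i=\min\{a_i,s\}$. Then $\mathbf c\le\mathbf a$, each $c_i\le s$, and $|\mathbf c|\ge 2s$. Decrease the coordinates of $\mathbf c$ one unit at a time until the sum is exactly $2s$. This never violates $c_i\le s$, so the resulting $\mathbf b$ satisfies $\mathbf b\le\mathbf a$, $b_i\le s$ and $|\mathbf b|=2s$. By the characterization, $\mathbf b=A_{K_t}\boldsymbol\lambda$ for some $\boldsymbol\lambda$ with $\sum\lambda_e=s$, so $\mathbf a\in\mathcal O_s(K_t)$.
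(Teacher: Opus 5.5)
Your proposal is correct and follows essentially the same route as the paper. You identify the minimal covers $V(K_t)\setminus\{v_i\}$, truncate $\min\{a_i,s\}$ to a vector $\mathbf c\le\mathbf a$ with $|\mathbf c|=2s$ and $c_i\le s$, and realize $\mathbf c$ as an edge-degree vector by induction on $s$. Your choice of the two largest coordinates is a special case of the paper's choice of two positive coordinates containing every coordinate equal to $s$. You should state explicitly that the second-largest coordinate is positive: $b_1\le s$ forces $\sum_{i\ge 2}b_i\ge s\ge 1$.
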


\begin{proof}
The minimal vertex covers of \(K_t\) are precisely the sets $V(K_t)\setminus\{v_i\}$, $i=1,\ldots,t$. Hence \(\mathbf a\in\mathcal P_s(K_t)\) if and only if $\sum_{j\neq i}a_j\geq s$ for every \(i\), which is equivalent to $|\mathbf a|-a_i\geq s$ for every \(i\).
We now prove the description of \(\mathcal O_s(K_t)\). If
\(\mathbf a\in\mathcal O_s(K_t)\), then \(\mathbf a\) dominates the degree
vector of a product of \(s\) edge monomials of \(K_t\). Such a product uses
exactly \(2s\) vertex occurrences, and no vertex can occur more than \(s\)
times. Therefore
\[
\sum_{i=1}^{t}\min\{a_i,s\}\geq 2s.
\]

Conversely, suppose that $\sum_{i=1}^{t}\min\{a_i,s\}\geq 2s$. Choose integers \(c_i\) such that $0\leq c_i\leq \min\{a_i,s\}$ for all \(i\), and $c_1+\cdots+c_t=2s$. Then \(c_i\leq s\) for all \(i\). We claim that
\(\mathbf c=(c_1,\ldots,c_t)\) is the degree vector of a multiset of \(s\)
edges of \(K_t\). This is clear for \(s=1\). For \(s>1\), choose two positive coordinates \(c_p,c_q\) so that every coordinate equal to \(s\) is among \(c_p,c_q\). Set $\mathbf c'=\mathbf c-\mathbf e_p-\mathbf e_q$. Then $|\mathbf c'|=2(s-1)$ and every coordinate of \(\mathbf c'\) is at most \(s-1\). By induction,
\(\mathbf c'\) is the degree vector of a multiset of \(s-1\) edges of \(K_t\).
Adding the edge \(\{v_p,v_q\}\) gives a multiset of \(s\) edges with degree vector \(\mathbf c\).
Since \(\mathbf c\leq\mathbf a\), the monomial with exponent vector
\(\mathbf a\) is divisible by a product of \(s\) edge monomials of \(K_t\).
Thus $\mathbf a\in\mathcal O_s(K_t)$.
\end{proof}

\subsection{Partition types}

We now introduce the partition notation used in the classification. For integers \(s\geq 1\) and \(1\leq p\leq s-1\), let $\Lambda_{s,p}^{(t)}$ denote the set of partitions $\lambda=(\lambda_1,\ldots,\lambda_m)$ such that $2\leq m\leq t$,  $\lambda_1\geq\lambda_2\geq\cdots\geq\lambda_m\geq 1$, $\lambda_1=p$, $|\lambda|=\lambda_1+\cdots+\lambda_m=s+p$, and the largest part \(p\) occurs at least twice. Equivalently, $\lambda_1=\lambda_2=p$. For such a partition \(\lambda\), let $\operatorname{Orb}_t(\lambda)$ be the set of all vectors in \(\mathbb N^t\) obtained by placing the parts of \(\lambda\) in \(m\) coordinates and filling the remaining \(t-m\) coordinates
with zeros, in all distinct ways.

\subsection{Classification theorem}

\begin{theorem}[Partition classification of \(\mathcal D_s(K_t)\)]
\label{thm:partition-classification-complete-graph}
Let \(t\geq 2\) and \(s\geq 1\). Then
\[
\mathcal D_s(K_t)
=
\bigcup_{p=1}^{s-1}
\ \bigcup_{\lambda\in\Lambda_{s,p}^{(t)}}
\operatorname{Orb}_t(\lambda).
\]
Equivalently, a vector \(\mathbf a\in\mathbb N^t\) belongs to \(\mathcal D_s(K_t)\) if and only if there exists an integer $p\in\{1,\ldots,s-1\}$ such that $|\mathbf a|=s+p$, $\max(\mathbf a)=p$, and the maximum value \(p\) occurs at least twice among the coordinates of
\(\mathbf a\).
\end{theorem}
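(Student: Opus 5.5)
The plan is to use the explicit descriptions of $\mathcal P_s(K_t)$ and $\mathcal O_s(K_t)$ from Lemma~\ref{lem:complete-graph-regions} to describe the defect region $\mathcal E_s(K_t)$ by inequalities in $N=|\mathbf a|$ and $M=\max(\mathbf a)$. I will then read off its minimal elements by testing single-coordinate decrements. The equivalence with the union of the orbits $\operatorname{Orb}_t(\lambda)$ is a matter of bookkeeping. A vector with $\max(\mathbf a)=p$ attained at least twice and $|\mathbf a|=s+p$ has nonzero coordinates forming a partition in $\Lambda^{(t)}_{s,p}$, and conversely. The number of parts $m$ satisfies $2\le m\le t$ automatically, because $p$ occurs twice.

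\textbf{Step 1: describe $\mathcal E_s(K_t)$.} By the lemma, $\mathbf a\in\mathcal P_s(K_t)$ if and only if $N-M\ge s$. I will show the following:
\[
\mathcal E_s(K_t)=\{\mathbf a:\ N-M\ge s,\ M\le s-1,\ N\le 2s-1\}.
\]
\begin{itemize}
\item Suppose $\mathbf a\in\mathcal P_s$ and $M\ge s$. Then the maximal coordinate contributes $\min\{a_i,s\}=s$. The remaining coordinates contribute $\sum_{j\ne i}\min\{a_j,s\}\ge\min\{N-M,s\}\ge s$, since either some $a_j\ge s$ or all terms equal $a_j$. So the total is at least $2s$, and $\mathbf a\in\mathcal O_s$.
\item Suppose instead $M\le s-1$. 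Then $\sum_i\min\{a_i,s\}=N$, so $\mathbf a\notin\mathcal O_s$ exactly when $N\le 2s-1$.
\end{itemize}

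\textbf{Step 2: necessary conditions for minimality.} Let $\mathbf a\in\mathcal D_s(K_t)$. Since $N\ge s\ge1$, we have $M\ge1$. Decrementing any coordinate preserves $M\le s-1$ and $N\le 2s-1$, so the decrement leaves $\mathcal E_s$ only if it violates $N-M\ge s$.
\begin{itemize}
\item If the maximum $M$ is attained at a unique coordinate $i$, then $\mathbf a-\mathbf e_i$ lowers both $N$ and $M$ by one. This leaves $N-M$ unchanged, so $\mathbf a-\mathbf e_i$ stays in $\mathcal E_s$, contradicting minimality. Hence $M$ occurs at least twice.
\item Decrementing one occurrence of the maximum then keeps the maximum equal to $M$ and lowers $N-M$ by one. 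Minimality forces $N-M=s$.
\end{itemize}
Setting $p=M$ gives $|\mathbf a|=s+p$ and $1\le p\le s-1$, with $p$ occurring at least twice.

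\textbf{Step 3: sufficiency.} Conversely, let $\mathbf a$ have $\max(\mathbf a)=p\in\{1,\dots,s-1\}$ attained twice, with $|\mathbf a|=s+p$. Then $N-M=s$, $M\le s-1$ and $N\le 2s-1$, so $\mathbf a\in\mathcal E_s$ by Step 1. For any coordinate $i$ with $a_i>0$, the maximum of $\mathbf a-\mathbf e_i$ is still $p$, because $p$ occurs twice. Its coordinate sum drops by one, so $\mathbf a-\mathbf e_i\notin\mathcal P_s$. Any $\mathbf b<\mathbf a$ satisfies $\mathbf b\le\mathbf a-\mathbf e_i$ for some $i$. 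Since $\mathcal P_s$ is upward closed, $\mathbf b\notin\mathcal P_s$, and hence $\mathbf a$ is minimal in $\mathcal E_s$.

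The main obstacle is the first bullet of Step 1, the case $M\ge s$. It must also handle configurations where several coordinates exceed $s$. The estimate $\sum_{j\ne i}\min\{a_j,s\}\ge\min\{N-M,s\}$ covers these. For $s=1$ both sides of the theorem are empty, consistent with the range $1\le p\le s-1$.
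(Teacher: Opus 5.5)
Your proof is correct and follows essentially the same route as the paper. Both use Lemma~\ref{lem:complete-graph-regions} to show that defect vectors have all coordinates at most $s-1$ and total degree at most $2s-1$, and then derive $|\mathbf a|-\max(\mathbf a)=s$ and the repeated maximum by single-coordinate decrements; the only cosmetic difference is in the minimality half of sufficiency, where you reduce to the vectors $\mathbf a-\mathbf e_i$ via upward closure of $\mathcal P_s(K_t)$, whereas the paper handles an arbitrary $\mathbf b<\mathbf a$ directly by counting the deficits $a_i-b_i$ over the maximal coordinates.
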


\begin{proof}
Let $\mathbf a=(a_1,\ldots,a_t)\in\mathcal D_s(K_t)$. Set $d=|\mathbf a|$ and $L=\max(\mathbf a)$. Since $\mathbf a\in\mathcal P_s(K_t)$, Lemma~\ref{lem:complete-graph-regions} gives $d-a_i\geq s$ for every \(i\). Hence $L\leq d-s$.
We first show that every coordinate of \(\mathbf a\) is at most \(s-1\). Suppose that \(a_i\geq s\) for some \(i\). Since \(\mathbf a\in\mathcal P_s(K_t)\), we have $d-a_i\geq s$.
Therefore, the total degree outside the \(i\)-th coordinate is at least \(s\).
It follows that
\[
\sum_{j=1}^{t}\min\{a_j,s\}\geq s+s=2s.
\]
By Lemma~\ref{lem:complete-graph-regions}, this implies $\mathbf a\in\mathcal O_s(K_t)$, contradicting
$\mathbf a\in\mathcal D_s(K_t)\subseteq\mathcal P_s(K_t)\setminus\mathcal O_s(K_t)$.
Thus $a_i\leq s-1$ for every \(i\).
Consequently,
\[
\sum_{i=1}^{t}\min\{a_i,s\}=|\mathbf a|=d.
\]
Since $\mathbf a\notin\mathcal O_s(K_t)$, Lemma~\ref{lem:complete-graph-regions} gives $d<2s$. Thus $d\leq 2s-1$.
We now prove that $L=d-s$. We already have \(L\leq d-s\). Suppose, for contradiction, that $L<d-s$. Choose an index \(j\) with \(a_j>0\), and set $\mathbf b=\mathbf a-\mathbf e_j$. Then $|\mathbf b|=d-1$ and $\max(\mathbf b)\leq L\leq d-s-1=(d-1)-s$.
Hence $|\mathbf b|-\max(\mathbf b)\geq s$, so $\mathbf b\in\mathcal P_s(K_t)$. Moreover, all coordinates of \(\mathbf b\) are at most \(s-1\), and $|\mathbf b|=d-1<2s$. Therefore
\[
\sum_{i=1}^{t}\min\{b_i,s\}=|\mathbf b|<2s,
\]
so $\mathbf b\notin\mathcal O_s(K_t)$. Thus $\mathbf b\in\mathcal P_s(K_t)\setminus\mathcal O_s(K_t)$, contradicting the minimality of \(\mathbf a\). Hence $L=d-s$.
Next we show that the maximum value \(L\) occurs at least twice. Suppose that \(L\) occurs only once, say at the \(j\)-th coordinate. Set $\mathbf b=\mathbf a-\mathbf e_j$. Then $|\mathbf b|=d-1$ and
$\max(\mathbf b)\leq L-1=(d-s)-1=(d-1)-s$.
Hence $\mathbf b\in\mathcal P_s(K_t)$. As before, $|\mathbf b|=d-1<2s$ and all coordinates of \(\mathbf b\) are at most \(s-1\), so $\mathbf b\notin\mathcal O_s(K_t)$. This again contradicts the minimality of \(\mathbf a\). Therefore, the maximum value \(L\) occurs at least twice.
Now set $p=L$. Since $L=d-s$, we have $d=s+p$. Also, because \(d\leq 2s-1\), we get $p\leq s-1$. Since \(L>0\), we have $p\geq 1$. Therefore $p\in\{1,\ldots,s-1\}$. The nonzero coordinates of \(\mathbf a\), written in nonincreasing order, form a partition $\lambda\in\Lambda_{s,p}^{(t)}$. Hence
\[
\mathbf a\in
\bigcup_{p=1}^{s-1}
\ \bigcup_{\lambda\in\Lambda_{s,p}^{(t)}}
\operatorname{Orb}_t(\lambda).
\]
Conversely, suppose that $\mathbf a\in\operatorname{Orb}_t(\lambda)$ for some $\lambda\in\Lambda_{s,p}^{(t)}$ and some \(p\in\{1,\ldots,s-1\}\). Then $|\mathbf a|=s+p$ and $\max(\mathbf a)=p$. Therefore $|\mathbf a|-\max(\mathbf a)=s$.
Hence $|\mathbf a|-a_i\geq s$ for every \(i\), and so $\mathbf a\in\mathcal P_s(K_t)$. Since \(p\leq s-1\), every coordinate of \(\mathbf a\) is at most \(s-1\).
Thus $\sum_{i=1}^{t}\min\{a_i,s\}=|\mathbf a|=s+p<2s$. By Lemma~\ref{lem:complete-graph-regions}, $\mathbf a\notin\mathcal O_s(K_t)$. Hence
$\mathbf a\in\mathcal P_s(K_t)\setminus\mathcal O_s(K_t)$.
It remains to prove minimality. Let $\mathbf 0\leq\mathbf b<\mathbf a$. Set $R=|\mathbf a|-|\mathbf b|>0$.
Let $M=\{i:a_i=p\}$ be the set of coordinates where \(\mathbf a\) attains its maximum. By
assumption, \(|M|\geq 2\). For \(i\in M\), write $\delta_i=a_i-b_i$. Since $\sum_{i\in M}\delta_i\leq R$ and \(|M|\geq 2\), there exists \(i_0\in M\) such that $\delta_{i_0}<R$. Therefore $b_{i_0}=p-\delta_{i_0}>p-R$.
But $|\mathbf b|=|\mathbf a|-R=s+p-R$, so $|\mathbf b|-s=p-R$. Thus $\max(\mathbf b)>|\mathbf b|-s$. Equivalently,
$|\mathbf b|-\max(\mathbf b)<s$. By Lemma~\ref{lem:complete-graph-regions}, $\mathbf b\notin\mathcal P_s(K_t)$. Therefore, no proper componentwise smaller vector lies in $\mathcal P_s(K_t)\setminus\mathcal O_s(K_t)$.
Hence $\mathbf a\in\mathcal D_s(K_t)$.

This proves the classification.
\end{proof}

\subsection{Low-degree consequences}

We record the first cases explicitly. These will be used in the applications
and examples.

\begin{corollary}\label{cor:D2-D3-D4-complete-graph}
Let \(t\geq 2\). Then $\mathcal D_2(K_t)
=
\left\{
\mathbf e_i+\mathbf e_j+\mathbf e_k:
1\leq i<j<k\leq t
\right\}$.
Moreover, $\mathcal D_3(K_t)
=
\mathcal A_3(t)\cup\mathcal B_3(t)$,
where
\[
\mathcal A_3(t)
=
\left\{
\mathbf e_i+\mathbf e_j+\mathbf e_k+\mathbf e_\ell:
1\leq i<j<k<\ell\leq t
\right\}
\]
and
\[
\mathcal B_3(t)
=
\left\{
2\mathbf e_i+2\mathbf e_j+\mathbf e_k:
i,j,k\text{ are distinct and } i<j
\right\}.
\]
Finally, \(\mathcal D_4(K_t)\) consists of the orbits of the partition patterns
\[
(1,1,1,1,1),
\qquad
(2,2,2),
\qquad
(2,2,1,1),
\qquad
(3,3,1),
\]
with at most \(t\) parts.
\end{corollary}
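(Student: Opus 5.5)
The plan is to obtain all three statements by specializing Theorem~\ref{thm:partition-classification-complete-graph}. That theorem says $\mathcal D_s(K_t)$ is the union, over $p\in\{1,\ldots,s-1\}$ and $\lambda\in\Lambda_{s,p}^{(t)}$, of $\operatorname{Orb}_t(\lambda)$. So for $s=2,3,4$ I only need to list, for each admissible $p$, the partitions of $s+p$ with $\lambda_1=\lambda_2=p$ and at most $t$ parts. Then I translate each orbit into the stated vector notation.

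The enumeration goes degree by degree.
\begin{itemize}
\item For $s=2$ the only value is $p=1$. A partition of $3$ with all parts equal to $1$ and the largest part repeated must be $(1,1,1)$. Its orbit in $\mathbb N^t$ consists of the vectors $\mathbf e_i+\mathbf e_j+\mathbf e_k$ with $i<j<k$.
\item For $s=3$ and $p=1$, we get $(1,1,1,1)$, whose orbit is $\mathcal A_3(t)$. For $p=2$, the partition has total $5$, parts at most $2$, and at least two parts equal to $2$; this forces $(2,2,1)$. Its orbit is $\{2\mathbf e_i+2\mathbf e_j+\mathbf e_k\}$ with $i,j,k$ distinct. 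Imposing $i<j$ ensures each vector is listed once, since the roles of $i$ and $j$ are symmetric. This gives $\mathcal B_3(t)$.
\item For $s=4$ and $p=1$, we get $(1^5)$. For $p=2$, the total is $6$ with two $2$'s already fixed, so the remaining parts sum to $2$ and are at most $2$. This yields $(2,2,2)$ and $(2,2,1,1)$. For $p=3$, the total is $7$ with two $3$'s, so the remaining part is $1$, giving $(3,3,1)$.
\end{itemize}

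The only point needing care is the constraint $m\le t$ in $\Lambda^{(t)}_{s,p}$. It explains the phrase ``with at most $t$ parts'' in the $\mathcal D_4$ statement. For $\mathcal D_2$ and $\mathcal D_3$ it is built into the index conditions: for example, $\mathcal D_2(K_2)=\emptyset$ because no triple $i<j<k$ exists, and $\mathcal A_3(3)=\emptyset$. I will check explicitly that these empty cases agree with the theorem, since $\operatorname{Orb}_t(\lambda)$ is empty when $\lambda$ has more than $t$ parts.

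I expect no real obstacle. The main risk is bookkeeping: keeping the enumeration of partitions complete, and keeping orbits distinct so there is no double counting. The latter is handled by the ordering conventions $i<j<k$, $i<j<k<\ell$, and $i<j$ with $k$ distinct from both.
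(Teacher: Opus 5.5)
Your proposal is correct and matches the paper's proof: both specialize Theorem~\ref{thm:partition-classification-complete-graph} to $s=2,3,4$ and, for each admissible $p$, list the partitions of $s+p$ with $\lambda_1=\lambda_2=p$, obtaining the same patterns. Your explicit check that the constraint $m\le t$ makes the orbits empty for small $t$ (for example, $\mathcal D_2(K_2)=\emptyset$) is a small addition that the paper leaves implicit.
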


\begin{proof}
For \(s=2\), the only possible value is \(p=1\). The corresponding partition
has total degree \(3\), largest part \(1\), and largest part occurring at least
twice. Thus, the only partition is $(1,1,1)$, which gives the stated description of \(\mathcal D_2(K_t)\).

For \(s=3\), if \(p=1\), the partition has total degree \(4\) and largest part \(1\), giving $(1,1,1,1)$.
If \(p=2\), the partition has total degree \(5\), largest part \(2\), and the
largest part must occur at least twice, giving $(2,2,1)$. This yields the stated description of \(\mathcal D_3(K_t)\).

For \(s=4\), the possible values are \(p=1,2,3\). If \(p=1\), the partition is $(1,1,1,1,1)$. If \(p=2\), the total degree is \(6\), and the possible partition types with
largest part \(2\) occurring at least twice are $(2,2,2)$ and $(2,2,1,1)$.
If \(p=3\), the total degree is \(7\), and the only possible partition type is $(3,3,1)$. The result follows from
Theorem~\ref{thm:partition-classification-complete-graph}.
\end{proof}

\begin{remark}
Theorem~\ref{thm:partition-classification-complete-graph} shows that the
complete graph case is controlled by partition types rather than by the sizes of
the blow-up parts. The blow-up parameters enter only later through the weights
in Theorem~\ref{thm:blowup-transfer-symbolic-defect}. This separation is what
allows complete multipartite graph formulas to be derived uniformly in all
symbolic degrees.
\end{remark}

\section{Applications of the general blow-up transfer formula}
\label{sec:applications-blowup}

The following examples are not separate targets of the paper. They illustrate
how the general theorem applies to different base graphs. The ambient class is
the class of all blow-ups \(G^{\mathbf n}\) of arbitrary finite simple graphs.

\subsection{Vanishing transfer}

We first record that vanishing of symbolic defects transfers from a base graph
to all of its blow-ups.

\begin{corollary}\label{cor:vanishing-transfer}
Let \(G\) be a finite simple graph and let \(G^{\mathbf n}\) be its \(\mathbf n\)-blow-up. Fix \(s\geq 1\). If $I(G)^{(s)}=I(G)^s$, then $I(G^{\mathbf n})^{(s)}=I(G^{\mathbf n})^s$.
In particular, if \(G\) is bipartite, then $I(G^{\mathbf n})^{(s)}=I(G^{\mathbf n})^s$ for all \(s\geq 1\).
\end{corollary}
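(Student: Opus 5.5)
The plan is to reduce everything to the antichain $\mathcal D_s(G)$ and apply the transfer formula. By Proposition~\ref{prop:base-defect-antichain-obstruction}, $I(G)^{(s)}=I(G)^s$ holds exactly when $\operatorname{sdefect}(I(G),s)=|\mathcal D_s(G)|=0$, that is, when $\mathcal D_s(G)=\emptyset$. To be careful I will first note that $\mathcal D_s(G)=\emptyset$ is equivalent to $\mathcal E_s(G)=\emptyset$. Indeed, $\mathcal E_s(G)$ is a subset of $\mathbb N^r$, and any nonempty subset of $\mathbb N^r$ has a minimal element under the componentwise order, since that order is well-founded.

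Next I apply Theorem~\ref{thm:blowup-transfer-symbolic-defect}. When $\mathcal D_s(G)=\emptyset$, the sum on the right-hand side is empty, so $\operatorname{sdefect}(I(G^{\mathbf n}),s)=0$. This means $J^{(s)}/J^s$ has no minimal generators, and since it is a finitely generated graded module, it is zero. Hence $J^{(s)}=J^s$.

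A more direct route is also available. By Lemmas~\ref{lem:symbolic-membership-part-degree} and~\ref{lem:ordinary-membership-part-degree}, a monomial $u$ lies in $J^{(s)}\setminus J^s$ if and only if $\pi(u)\in\mathcal E_s(G)=\emptyset$. Because both powers are monomial ideals, they therefore coincide. I would present this version as the main argument, since it avoids any appeal to Nakayama.

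For the bipartite statement, I will invoke the classical theorem of Simis--Vasconcelos--Villarreal (cited in the paper as \cite{SIM}): $G$ is bipartite if and only if $I(G)^{(s)}=I(G)^s$ for all $s\geq 1$. Applying the first part for each $s$ then gives the claim. As an alternative, I could observe that $G^{\mathbf n}$ is itself bipartite, since a bipartition of $G$ lifts to one of $G^{\mathbf n}$ by putting each $X_i$ on the side of $v_i$, and then apply the same theorem directly to $G^{\mathbf n}$.

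I do not expect a genuine obstacle. The only delicate point is justifying that the vanishing of the defect, which is a count of minimal generators, forces equality of the ideals. The monomial-membership argument above handles this cleanly.
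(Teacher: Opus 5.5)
Your proposal is correct, and your first route is exactly the paper's proof: $I(G)^{(s)}=I(G)^s$ forces $\mathcal D_s(G)=\emptyset$, Theorem~\ref{thm:blowup-transfer-symbolic-defect} then gives $\operatorname{sdefect}(I(G^{\mathbf n}),s)=0$, and the bipartite case follows from the Simis--Vasconcelos--Villarreal theorem. Your preferred variant through Lemmas~\ref{lem:symbolic-membership-part-degree} and~\ref{lem:ordinary-membership-part-degree} skips the counting formula and makes explicit the step from vanishing defect to equality of ideals, which the paper leaves tacit; your observation that $G^{\mathbf n}$ is itself bipartite is an equally valid shortcut for the final assertion.
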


\begin{proof}
If \(I(G)^{(s)}=I(G)^s\), then $\mathcal D_s(G)=\emptyset$. Hence, by Theorem~\ref{thm:blowup-transfer-symbolic-defect}, $\operatorname{sdefect}(I(G^{\mathbf n}),s)=0$. Therefore $I(G^{\mathbf n})^{(s)}=I(G^{\mathbf n})^s$.
The final assertion follows from the classical fact that edge ideals of
bipartite graphs are normally torsion free; equivalently,
\(I(G)^{(s)}=I(G)^s\) for all \(s\geq 1\) when \(G\) is bipartite
\cite{SIM}.
\end{proof}

\subsection{Complete multipartite graphs}

Complete multipartite graphs arise naturally as blow-ups of complete graphs. Indeed, if $\mathbf n=(n_1,\ldots,n_t)$, then $K_t^{\mathbf n}=K_{n_1,\ldots,n_t}$. Thus, complete multipartite graphs are treated here as applications of the
general blow-up transfer formula rather than as an isolated family.

\begin{corollary}\label{cor:complete-multipartite-general}
Let $J=I(K_{n_1,\ldots,n_t})$. Then, for every \(s\geq 1\),
\[
\operatorname{sdefect}(J,s)
=
\sum_{\mathbf a\in\mathcal D_s(K_t)}
\prod_{i=1}^{t}
\binom{a_i+n_i-1}{n_i-1}.
\]
Equivalently, using the partition classification of
\(\mathcal D_s(K_t)\),
\[
\operatorname{sdefect}(J,s)
=
\sum_{p=1}^{s-1}
\sum_{\lambda\in\Lambda_{s,p}^{(t)}}
\sum_{\mathbf a\in\operatorname{Orb}_t(\lambda)}
\prod_{i=1}^{t}
\binom{a_i+n_i-1}{n_i-1}.
\]
\end{corollary}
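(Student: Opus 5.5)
The plan is to obtain Corollary~\ref{cor:complete-multipartite-general} by specializing Theorem~\ref{thm:blowup-transfer-symbolic-defect} to the base graph $K_t$, and then substituting the classification of $\mathcal D_s(K_t)$ from Theorem~\ref{thm:partition-classification-complete-graph}.

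\emph{First formula.} We begin by identifying $K_{n_1,\ldots,n_t}$ with the blow-up $K_t^{\mathbf n}$, where $\mathbf n=(n_1,\ldots,n_t)$. In the $\mathbf n$-blow-up, each vertex $v_i$ is replaced by an independent set $X_i$ of size $n_i$. Every pair $v_i\neq v_j$ is adjacent in $K_t$, so all edges between $X_i$ and $X_j$ are present, and there are no edges inside any $X_i$. This is exactly the complete multipartite graph with parts $X_1,\ldots,X_t$. Hence $J=I(K_t^{\mathbf n})$, and applying Theorem~\ref{thm:blowup-transfer-symbolic-defect} with $G=K_t$ and $r=t$ gives the first displayed formula directly.

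\emph{Second formula.} Theorem~\ref{thm:partition-classification-complete-graph} expresses $\mathcal D_s(K_t)$ as the union of the sets $\operatorname{Orb}_t(\lambda)$ over $p\in\{1,\ldots,s-1\}$ and $\lambda\in\Lambda_{s,p}^{(t)}$. To turn the sum over $\mathcal D_s(K_t)$ into the triple sum, the only point needing care is that this union is disjoint, so that no vector is counted twice. I would argue disjointness as follows:
\begin{itemize}
\item Any $\mathbf a$ in the union determines $p=\max(\mathbf a)$.
\item It also determines $\lambda$, namely the nonincreasing rearrangement of its nonzero coordinates.
\item Therefore distinct pairs $(p,\lambda)$ give disjoint orbits.
\item By definition, $\operatorname{Orb}_t(\lambda)$ lists each distinct vector only once.
\end{itemize}

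With disjointness established, we split the sum over $\mathcal D_s(K_t)$ accordingly, and the weight $\prod_i\binom{a_i+n_i-1}{n_i-1}$ is unchanged term by term. This yields the second formula.

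For small cases: when $s=1$ the index range for $p$ is empty, so both sides vanish. This agrees with $I^{(1)}=I$.

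I expect no real obstacle here. The only step deserving explicit mention is the disjointness and non-redundancy of the orbit decomposition, which guarantees that the triple sum equals the sum over $\mathcal D_s(K_t)$ rather than overcounting it.
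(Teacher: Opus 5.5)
Your proposal is correct and follows the paper's own proof: identify $K_{n_1,\ldots,n_t}$ with $K_t^{\mathbf n}$, apply Theorem~\ref{thm:blowup-transfer-symbolic-defect} with base graph $K_t$, and then substitute the partition classification of Theorem~\ref{thm:partition-classification-complete-graph}. The paper leaves implicit that the sets $\operatorname{Orb}_t(\lambda)$ are pairwise disjoint; your check of this (each $\mathbf a$ determines $p=\max(\mathbf a)$ and $\lambda$, so the triple sum does not overcount) is valid.
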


\begin{proof}
Since $K_{n_1,\ldots,n_t}=K_t^{(n_1,\ldots,n_t)}$, the first formula follows directly from
Theorem~\ref{thm:blowup-transfer-symbolic-defect}. The second formula follows
by substituting the partition classification of
\(\mathcal D_s(K_t)\).
\end{proof}

The next corollary records the first two nonzero cases explicitly.

\begin{corollary}\label{cor:complete-multipartite-s2-s3}
Let $J=I(K_{n_1,\ldots,n_t})$. Then
\[
\operatorname{sdefect}(J,2)
=
\sum_{1\leq i<j<k\leq t} n_i n_j n_k,
\]
and
\[
\operatorname{sdefect}(J,3)
=
\sum_{1\leq i<j<k<\ell\leq t}
n_i n_j n_k n_\ell
+
\sum_{\substack{1\leq i<j\leq t\\ k\neq i,j}}
\binom{n_i+1}{2}\binom{n_j+1}{2}n_k.
\]
\end{corollary}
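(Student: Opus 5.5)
The plan is to combine the blow-up transfer formula of Corollary~\ref{cor:complete-multipartite-general} with the explicit descriptions of $\mathcal D_2(K_t)$ and $\mathcal D_3(K_t)$ from Corollary~\ref{cor:D2-D3-D4-complete-graph}. The computation has two ingredients: evaluating the weight $W_{\mathbf n}(\mathbf a)=\prod_i\binom{a_i+n_i-1}{n_i-1}$ on each element of the antichain, and checking that the index sets in the stated formulas list every element exactly once.

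For $s=2$, the antichain is $\mathcal D_2(K_t)=\{\mathbf e_i+\mathbf e_j+\mathbf e_k: i<j<k\}$. Each such vector has three coordinates equal to $1$ and the rest equal to $0$. Since $\binom{n-1}{n-1}=1$ and $\binom{n}{n-1}=n$, its weight is $n_in_jn_k$. The map from triples $i<j<k$ to these vectors is a bijection, so summing the weights gives the first formula. When $t<3$ the index set is empty, and both sides are $0$, which is consistent.

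For $s=3$, I would split $\mathcal D_3(K_t)=\mathcal A_3(t)\cup\mathcal B_3(t)$; the union is disjoint because the elements of $\mathcal A_3(t)$ are $0/1$-vectors and those of $\mathcal B_3(t)$ are not.
\begin{itemize}
\item On $\mathcal A_3(t)$ the same evaluation as for $s=2$ gives the weight $n_in_jn_kn_\ell$, with one vector per $4$-subset $i<j<k<\ell$.
\item For $2\mathbf e_i+2\mathbf e_j+\mathbf e_k$, the coordinate equal to $2$ contributes $\binom{n+1}{n-1}=\binom{n+1}{2}$. The weight is therefore $\binom{n_i+1}{2}\binom{n_j+1}{2}n_k$.
\end{itemize}

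The one point needing care is that each vector in $\mathcal B_3(t)$ is counted exactly once. The unordered pair $\{i,j\}$ of coordinates equal to $2$ is recorded by the condition $i<j$, and $k$ is the unique coordinate equal to $1$. Hence the triples $(i<j,\ k\notin\{i,j\})$ are in bijection with $\mathcal B_3(t)$. This matches the orbit count of the partition $(2,2,1)$ in Theorem~\ref{thm:partition-classification-complete-graph}. Adding the two contributions gives the second formula. No step is a genuine obstacle; the only place an error could enter is this orbit bookkeeping.
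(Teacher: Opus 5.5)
Your proposal is correct and follows the paper's own proof: substitute the partition patterns $(1,1,1)$ for $s=2$, and $(1,1,1,1)$ and $(2,2,1)$ for $s=3$, from the classification of $\mathcal D_s(K_t)$ into the transfer formula of Corollary~\ref{cor:complete-multipartite-general}. Your weight evaluations $\binom{n}{n-1}=n$ and $\binom{n+1}{n-1}=\binom{n+1}{2}$, together with the bijection between index triples $(i<j,\ k\notin\{i,j\})$ and $\mathcal B_3(t)$, simply spell out bookkeeping that the paper leaves implicit.
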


\begin{proof}
For \(s=2\), the partition classification of \(\mathcal D_s(K_t)\) gives the
single partition pattern $(1,1,1)$. For \(s=3\), it gives the two partition patterns $(1,1,1,1)$ and $(2,2,1)$.
Substituting these patterns into Corollary~\ref{cor:complete-multipartite-general} gives the stated formulas.
\end{proof}

\subsection{Complete split graphs}

Let \(S_{c,d}\) denote the complete split graph with a clique of size \(c\) and
an independent set of size \(d\), where every clique vertex is adjacent to every
independent vertex. Then
\[
S_{c,d}=K_{\underbrace{1,\ldots,1}_{c\text{ times}},d}.
\]
Hence \(S_{c,d}\) is a special blow-up of a complete graph.

\begin{corollary}\label{cor:complete-split-s2-s3}
Let $J=I(S_{c,d})$. Then
\[
\operatorname{sdefect}(J,2)
=
\binom{c}{3}+\binom{c}{2}d,
\]
and
\[
\operatorname{sdefect}(J,3)
=
\binom{c}{4}
+
\binom{c}{3}d
+
\binom{c}{2}(c-2+d)
+
c(c-1)\binom{d+1}{2}.
\]
\end{corollary}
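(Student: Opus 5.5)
The plan is to specialize Corollary~\ref{cor:complete-multipartite-s2-s3} to the complete multipartite description $S_{c,d}=K_{1,\ldots,1,d}$, i.e.\ the blow-up $K_t^{\mathbf n}$ with $t=c+1$ and $\mathbf n=(1,\ldots,1,d)$. We label the parts so that $n_1=\cdots=n_c=1$ are the clique vertices and $n_{c+1}=d$ is the independent set. No new structural input is needed. The only work is to split each sum in Corollary~\ref{cor:complete-multipartite-s2-s3} according to whether the special index $c+1$ occurs, and to evaluate the weights. The weights reduce to $n_i=1$ and $\binom{n_i+1}{2}=1$ for clique indices, and to $n_{c+1}=d$ and $\binom{d+1}{2}$ for the independent part.

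For $s=2$, the sum $\sum_{i<j<k} n_in_jn_k$ splits into two kinds of triples. Triples inside $\{1,\ldots,c\}$ each contribute $1$, giving $\binom{c}{3}$. Triples containing $c+1$ are determined by a pair of clique indices and each contribute $d$, giving $\binom{c}{2}d$.

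For $s=3$, the first sum, coming from the pattern $(1,1,1,1)$, splits the same way into $\binom{c}{4}+\binom{c}{3}d$. The second sum, coming from the pattern $(2,2,1)$, is indexed by an unordered pair $\{i,j\}$ carrying the $2$'s and a third index $k$ carrying the $1$. We split it into three cases.
\begin{enumerate}
\item If $i,j\le c$ and $k\le c$, there are $\binom{c}{2}(c-2)$ choices, each of weight $1$.
\item If $i,j\le c$ and $k=c+1$, there are $\binom{c}{2}$ choices, each of weight $d$.
\item If one of $i,j$ equals $c+1$, the other $2$-coordinate ranges over $c$ clique vertices and $k$ over the remaining $c-1$ clique vertices. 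Each such choice has weight $\binom{d+1}{2}$, for a total of $c(c-1)\binom{d+1}{2}$.
\end{enumerate}
The first two cases combine to $\binom{c}{2}(c-2+d)$, which yields the stated formula.

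There is no real obstacle here. The only point needing care is the bookkeeping in the $(2,2,1)$ case. The two $2$-coordinates form an unordered pair, matching the condition $i<j$ in $\mathcal B_3(t)$, so nothing is double counted. The index $k$ must be distinct from $i$ and $j$, which is why $c-1$, and not $c$, choices remain in the third case. The degenerate ranges $c<2,3,4$ are handled automatically by the convention that binomial coefficients vanish.
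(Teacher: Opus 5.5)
Your proposal is correct and is the same argument as the paper: the paper's proof consists of substituting $(n_1,\ldots,n_{c+1})=(1,\ldots,1,d)$ into Corollary~\ref{cor:complete-multipartite-s2-s3}. Your case split of the $(2,2,1)$ sum by where the index $c+1$ sits, with $k$ ranging over $c-1$ values in the third case, spells out the bookkeeping that the paper leaves implicit, and it checks out.
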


\begin{proof}
Substitute $(n_1,\ldots,n_{c+1})=(1,\ldots,1,d)$ into Corollary~\ref{cor:complete-multipartite-s2-s3}.
\end{proof}

\subsection{Blow-ups of odd cycles}

The transfer formula also applies to non-complete base graphs. Let \(C_{2q+1}\) be the odd cycle on vertices $v_1,\ldots,v_{2q+1}$. It is known that
\[
I(C_{2q+1})^{(q+1)}
=
I(C_{2q+1})^{q+1}
+
(x_1x_2\cdots x_{2q+1});
\]
see, for example, the symbolic-power decompositions for odd cycles and
unicyclic graphs in \cite{FRA, GU, MAN}. Hence $\mathcal D_{q+1}(C_{2q+1})=\{(1,\ldots,1)\}$.
Applying Theorem~\ref{thm:blowup-transfer-symbolic-defect}, we obtain the
following formula for all blow-ups of odd cycles.

\begin{corollary}\label{cor:odd-cycle-blowup-first-defect}
Let $\mathbf n=(n_1,\ldots,n_{2q+1})$ and let $J=I(C_{2q+1}^{\mathbf n})$. Then
\[
\operatorname{sdefect}(J,q+1)
=
\prod_{i=1}^{2q+1}n_i.
\]
\end{corollary}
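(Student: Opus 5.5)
The formula should be an immediate consequence of Theorem~\ref{thm:blowup-transfer-symbolic-defect}, once we know that $\mathcal D_{q+1}(C_{2q+1})=\{(1,\ldots,1)\}$. The blow-up weight of the all-ones vector is then
\[
W_{\mathbf n}(1,\ldots,1)=\prod_{i=1}^{2q+1}\binom{1+n_i-1}{n_i-1}=\prod_{i=1}^{2q+1}n_i ,
\]
which is the claimed value. Equivalently, by Lemma~\ref{lem:defect-antichain-minimal-generators}, the minimal generators of $J^{(q+1)}/J^{q+1}$ are the squarefree transversals $x_{1j_1}x_{2j_2}\cdots x_{2q+1,j_{2q+1}}$, one variable chosen from each part.

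The substantive step is the antichain identification, and I would check it directly in the exponent language rather than relying only on the cited decomposition.

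\emph{Membership.} The vector $\mathbf 1=(1,\ldots,1)$ lies in $\mathcal P_{q+1}(C_{2q+1})$, since every vertex cover of an odd cycle with $2q+1$ vertices has at least $q+1$ vertices. It does not lie in $\mathcal O_{q+1}(C_{2q+1})$, because $q+1$ edge-incidences require total degree $2q+2>2q+1$.

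\emph{Minimality.} Removing a coordinate gives $\mathbf 1-\mathbf e_i$. The set $V\setminus\{v_i\}$ induces a path on $2q$ vertices, and in this cyclic arrangement the alternate vertices starting at the neighbor of $v_i$ form a cover of size $q$ of the whole cycle. That cover has coordinate sum $q$ in $\mathbf 1-\mathbf e_i$, so $\mathbf 1-\mathbf e_i\notin\mathcal P_{q+1}$. Hence $\mathbf 1$ is minimal in $\mathcal E_{q+1}$.

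\emph{Exhaustiveness.} I would invoke the quoted identity $I(C_{2q+1})^{(q+1)}=I(C_{2q+1})^{q+1}+(x_1\cdots x_{2q+1})$. Together with Proposition~\ref{prop:base-defect-antichain-obstruction}, it says that the only minimal generator of the quotient is $x^{\mathbf 1}$, so $\mathcal D_{q+1}(C_{2q+1})=\{\mathbf 1\}$.

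The main obstacle is this exhaustiveness step, which is exactly the content of the cited symbolic-power decomposition. If a self-contained argument were wanted, I would take $\mathbf a\in\mathcal P_{q+1}\setminus\mathcal O_{q+1}$ and show it must dominate $\mathbf 1$. Suppose instead that some $a_i=0$. Then both neighbors of $v_i$ satisfy the cover inequalities of a bipartite (path) graph, and the normality-type argument for bipartite graphs (Corollary~\ref{cor:vanishing-transfer} and \cite{SIM}) would place $\mathbf a$ in $\mathcal O_{q+1}$. I would defer to the cited literature for this step.
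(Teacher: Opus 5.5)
Your proposal is correct and follows the paper's proof. The paper likewise reads off $\mathcal D_{q+1}(C_{2q+1})=\{(1,\ldots,1)\}$ from the cited identity $I(C_{2q+1})^{(q+1)}=I(C_{2q+1})^{q+1}+(x_1\cdots x_{2q+1})$, and then evaluates Theorem~\ref{thm:blowup-transfer-symbolic-defect} at $\mathbf a=(1,\ldots,1)$ using $\binom{n_i}{n_i-1}=n_i$.

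\textbf{A slip in the minimality check.} This step is not load-bearing, but the witness you give is wrong. The alternate vertices $v_{i+1},v_{i+3},\ldots,v_{i+2q-1}$ (indices mod $2q+1$) miss the edge $\{v_{i-1},v_i\}$. Indeed, your own membership step says no cover of $C_{2q+1}$ has only $q$ vertices. The correct witness is the minimum cover $\{v_i,v_{i+1},v_{i+3},\ldots,v_{i+2q-1}\}$ of size $q+1$, whose coordinate sum in $\mathbf 1-\mathbf e_i$ is $q$.

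\textbf{Your deferred self-contained route works.} Suppose $a_i=0$. For each minimal cover $C'$ of the path $C_{2q+1}-v_i$, the set $C'\cup\{v_i\}$ covers the cycle and has the same $\mathbf a$-sum, so that sum is at least $q+1$. Hence $\mathbf a$ with its $i$-th coordinate deleted lies in $\mathcal P_{q+1}(C_{2q+1}-v_i)$. By normal torsion-freeness of bipartite edge ideals, this region equals $\mathcal O_{q+1}(C_{2q+1}-v_i)$, so $\mathbf a\in\mathcal O_{q+1}(C_{2q+1})$. Therefore every element of $\mathcal E_{q+1}(C_{2q+1})$ dominates $\mathbf 1$, which gives $\mathcal D_{q+1}(C_{2q+1})=\{\mathbf 1\}$ without appealing to the odd-cycle decomposition.
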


\begin{proof}
Since $\mathcal D_{q+1}(C_{2q+1})=\{(1,\ldots,1)\}$, Theorem~\ref{thm:blowup-transfer-symbolic-defect} gives
\[
\operatorname{sdefect}(J,q+1)
=
\prod_{i=1}^{2q+1}
\binom{1+n_i-1}{n_i-1}
=
\prod_{i=1}^{2q+1}n_i.
\]
\end{proof}

\begin{remark}
The complete multipartite and complete split formulas come from the base graph
\(K_t\), whereas Corollary~\ref{cor:odd-cycle-blowup-first-defect} comes from
the non-complete base graph \(C_{2q+1}\). This illustrates that the transfer
formula is not restricted to complete base graphs.
\end{remark}

\section{Symbolic defect antichains under graph joins}
\label{sec:joins}

In this section, we briefly record how the defect-region language behaves under
graph joins. This operation is included as a complementary construction to
graph blow-ups: while blow-ups replace vertices by independent sets, joins add
all possible edges between two graphs. The results below are stated in terms of
symbolic and ordinary exponent regions, and they provide a finite criterion for
computing defect antichains of joins.

Let \(G_1\) and \(G_2\) be finite simple graphs on disjoint vertex sets $V(G_1)=\{x_1,\ldots,x_r\}$, $V(G_2)=\{y_1,\ldots,y_m\}$.
The join of \(G_1\) and \(G_2\), denoted by $G_1*G_2$, is the graph with vertex set
\[
V(G_1*G_2)=V(G_1)\sqcup V(G_2)
\]
and edge set
\[
E(G_1*G_2)
=
E(G_1)\sqcup E(G_2)
\sqcup
\bigl\{\{x_i,y_j\}:1\leq i\leq r,\ 1\leq j\leq m\bigr\}.
\]
For vectors
\[
\mathbf a=(a_1,\ldots,a_r)\in\mathbb N^r,
\qquad
\mathbf b=(b_1,\ldots,b_m)\in\mathbb N^m,
\]
we write
\[
|\mathbf a|=a_1+\cdots+a_r,
\qquad
|\mathbf b|=b_1+\cdots+b_m.
\]

\subsection{Symbolic regions of joins}

We first describe the symbolic exponent region of a join.

\begin{lemma}\label{lem:min-covers-join}
Let $G=G_1*G_2$. Then the minimal vertex covers of \(G\) are precisely the sets $C_1\cup V(G_2)$, $C_1\in\mathcal C(G_1)$, and $V(G_1)\cup C_2$, $C_2\in\mathcal C(G_2)$, where \(\mathcal C(G_i)\) denotes the set of minimal vertex covers of \(G_i\).
\end{lemma}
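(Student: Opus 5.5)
We prove the two inclusions separately, using only the edge structure of the join: every vertex of \(G_1\) is adjacent to every vertex of \(G_2\). The first step is the key structural observation. If \(W\) is any vertex cover of \(G=G_1*G_2\), then \(W\) contains all of \(V(G_1)\) or all of \(V(G_2)\). Otherwise we could pick \(x_i\in V(G_1)\setminus W\) and \(y_j\in V(G_2)\setminus W\), and the join edge \(\{x_i,y_j\}\) would be uncovered. This is the same mechanism as in the proof of Lemma~\ref{lem:min-covers-blowup}.

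Next, let \(W\) be a minimal vertex cover and suppose \(V(G_2)\subseteq W\); the other case is symmetric. Set \(C_1=W\cap V(G_1)\). The edges of \(G_1\) must be covered by vertices of \(V(G_1)\), so \(C_1\) is a vertex cover of \(G_1\). If \(C_1\) were not minimal, say \(C_1'\subsetneq C_1\) is still a cover of \(G_1\), then \(C_1'\cup V(G_2)\) would cover \(E(G_1)\), \(E(G_2)\) and all join edges. That would contradict the minimality of \(W\). Hence \(W=C_1\cup V(G_2)\) with \(C_1\in\mathcal C(G_1)\).

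There is one subtle point to check here. When \(V(G_1)\subseteq W\) and \(V(G_2)\subseteq W\) both hold, the conclusion must still be one of the two listed forms. Minimality handles this: if, for instance, \(V(G_2)\subseteq W\), then minimality forces \(W\cap V(G_1)\) to be a minimal cover of \(G_1\). That intersection could equal \(V(G_1)\) only if \(V(G_1)\) is itself a minimal cover of \(G_1\), which happens when \(G_1\) has no edges and \(r\) is small. These degenerate edgeless situations are exactly where the converse direction needs care, and I expect them to be the main obstacle.

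For the converse, take \(C_1\in\mathcal C(G_1)\) and check that \(C_1\cup V(G_2)\) is a minimal cover of \(G\). It covers every edge: edges of \(G_1\) by \(C_1\), and edges of \(G_2\) together with join edges by \(V(G_2)\). To show minimality, consider removing some vertex.
\begin{itemize}
\item Removing \(x\in C_1\) leaves some edge of \(G_1\) uncovered, by minimality of \(C_1\).
\item Removing \(y\in V(G_2)\) leaves uncovered the join edge \(\{x,y\}\) for any \(x\in V(G_1)\setminus C_1\). This requires \(C_1\neq V(G_1)\), which holds whenever \(G_1\) has a minimal cover missing a vertex, for instance whenever \(G_1\) has at least one vertex and we use the convention that an edgeless graph has \(\emptyset\) as its unique minimal cover.
\end{itemize}
With that convention (\(\mathcal C(G_i)=\{\emptyset\}\) when \(E(G_i)=\emptyset\)), every minimal cover of a graph with a nonempty vertex set misses at least one vertex. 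The argument then goes through uniformly, and the case \(V(G_1)\cup C_2\) is symmetric. I would state this convention explicitly at the start of the proof, since it is the only place the argument could fail.
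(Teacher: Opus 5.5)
Your proof is correct and is essentially the paper's argument: a cover must contain one whole side, minimality of $W$ forces the restriction to the other side to be a minimal cover of that factor, and the converse is checked by single-vertex deletions, where you usefully make explicit the requirement $C_1\neq V(G_1)$ that the paper passes over as ``immediate.'' That requirement needs no convention: $\emptyset$ is by definition the unique minimal cover of an edgeless graph, and for any loopless $G_1$ with $r\geq 1$ the set $V(G_1)\setminus\{x\}$ still covers every edge, so $V(G_1)$ is never a minimal cover (not merely ``when $G_1$ has no edges and $r$ is small''); the only genuinely degenerate case is an empty vertex set, where the lemma actually fails and which the setup excludes.
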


\begin{proof}
Let \(W\) be a vertex cover of \(G_1*G_2\). Since every vertex of \(G_1\) is adjacent to every vertex of \(G_2\), the complement of \(W\) cannot contain vertices from both \(V(G_1)\) and \(V(G_2)\). Hence, either $V(G_1)\subseteq W$ or $V(G_2)\subseteq W$.
If \(V(G_2)\subseteq W\), then \(W\cap V(G_1)\) must be a vertex cover of \(G_1\), and minimality of \(W\) forces \(W\cap V(G_1)\) to be a minimal vertex cover of \(G_1\). Thus $W=C_1\cup V(G_2)$ for some \(C_1\in\mathcal C(G_1)\). The case \(V(G_1)\subseteq W\) is analogous. The converse is immediate from the definition of the join and the minimality of \(C_1\) or \(C_2\).
\end{proof}

For convenience, if \(q\leq 0\), we set $\mathcal P_q(G_i)=\mathbb N^{|V(G_i)|}$.

\begin{proposition}\label{prop:symbolic-region-join}
Let $G=G_1*G_2$. Then, for every \(s\geq 1\),
\[
\mathcal P_s(G)
=
\left\{
(\mathbf a,\mathbf b)\in\mathbb N^r\times\mathbb N^m:
\mathbf a\in\mathcal P_{s-|\mathbf b|}(G_1)
\text{ and }
\mathbf b\in\mathcal P_{s-|\mathbf a|}(G_2)
\right\}.
\]
Equivalently, \((\mathbf a,\mathbf b)\in\mathcal P_s(G)\) if and only if
\[
\sum_{x_i\in C_1}a_i+|\mathbf b|\geq s
\]
for every \(C_1\in\mathcal C(G_1)\), and
\[
|\mathbf a|+\sum_{y_j\in C_2}b_j\geq s
\]
for every \(C_2\in\mathcal C(G_2)\).
\end{proposition}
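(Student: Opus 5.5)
The plan is to combine the definition of $\mathcal P_s(G)$ with the description of minimal vertex covers of the join in Lemma~\ref{lem:min-covers-join}. By definition, $(\mathbf a,\mathbf b)\in\mathcal P_s(G)$ if and only if $\sum_{v\in W}c_v\geq s$ for every minimal vertex cover $W$ of $G=G_1*G_2$, where $c$ is the concatenated vector $(\mathbf a,\mathbf b)$. Lemma~\ref{lem:min-covers-join} says the minimal covers are exactly $C_1\cup V(G_2)$ with $C_1\in\mathcal C(G_1)$ and $V(G_1)\cup C_2$ with $C_2\in\mathcal C(G_2)$. Moreover, these unions are disjoint because the vertex sets are disjoint.

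First I will evaluate the cover sums on each type of minimal cover. For a cover of the first type, the sum is $\sum_{x_i\in C_1}a_i+|\mathbf b|$. For a cover of the second type, it is $|\mathbf a|+\sum_{y_j\in C_2}b_j$. This immediately gives the second, "equivalently", formulation of the proposition.

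Next I will rewrite these inequalities in the form of the first formulation. The inequality $\sum_{x_i\in C_1}a_i\geq s-|\mathbf b|$ holding for all $C_1\in\mathcal C(G_1)$ is, by definition, $\mathbf a\in\mathcal P_{s-|\mathbf b|}(G_1)$, provided $s-|\mathbf b|\geq 1$. When $s-|\mathbf b|\leq 0$, the inequalities hold trivially because the $a_i$ are nonnegative. This matches the stated convention $\mathcal P_q(G_1)=\mathbb N^r$ for $q\leq 0$. The same argument applies symmetrically to $\mathbf b$ and $G_2$.

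The only delicate point is degenerate cases, which I will dispatch separately. If $G_1$ has no edges, then $\mathcal C(G_1)=\{\emptyset\}$, and the condition becomes $|\mathbf b|\geq s$. This is still consistent with $\mathcal P_q(G_1)$, read as requiring $0\geq q$ from the empty cover. It also depends on Lemma~\ref{lem:min-covers-join} remaining valid in that case, which it does: when $E(G_1)=\emptyset$, the set $V(G_2)$ is a minimal cover of the join. I expect no genuine obstacle beyond making this convention explicit.
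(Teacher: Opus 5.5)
Your proposal is correct and follows the paper's proof: both read off the cover inequalities directly from the description of the minimal vertex covers of the join in Lemma~\ref{lem:min-covers-join}, then rewrite them using the convention $\mathcal P_q(G_i)=\mathbb N^{|V(G_i)|}$ for $q\leq 0$. Your extra check of the edgeless case is a harmless refinement that the paper leaves implicit; there $\mathcal C(G_1)=\{\emptyset\}$ forces $|\mathbf b|\geq s$.
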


\begin{proof}
By Lemma~\ref{lem:min-covers-join}, the minimal vertex covers of \(G\) are
exactly $C_1\cup V(G_2)$ and $V(G_1)\cup C_2$.
Therefore, the symbolic-cover inequalities for \((\mathbf a,\mathbf b)\) are
precisely $\sum_{x_i\in C_1}a_i+|\mathbf b|\geq s$ for all \(C_1\in\mathcal C(G_1)\), and $|\mathbf a|+\sum_{y_j\in C_2}b_j\geq s$
for all \(C_2\in\mathcal C(G_2)\). These conditions are equivalent to $\mathbf a\in\mathcal P_{s-|\mathbf b|}(G_1)$
and $\mathbf b\in\mathcal P_{s-|\mathbf a|}(G_2)$, with the convention that \(\mathcal P_q(G_i)=\mathbb N^{|V(G_i)|}\) for \(q\leq 0\).
\end{proof}

\subsection{Ordinary regions of joins}

We now describe the ordinary-power exponent region. For \(p\geq 0\), define
\[
\Gamma_p(G_i)
=
\left\{
A_{G_i}\boldsymbol\lambda:
\boldsymbol\lambda\in\mathbb N^{E(G_i)}
\text{ and }
\sum_{e\in E(G_i)}\lambda_e=p
\right\},
\]
where \(A_{G_i}\) is the vertex-edge incidence matrix of \(G_i\). Thus \(\Gamma_p(G_i)\) records the exact exponent vectors of products of \(p\) edge monomials of \(G_i\). We use the convention $\Gamma_0(G_i)=\{\mathbf 0\}$.

\begin{proposition}\label{prop:ordinary-region-join}
Let $G=G_1*G_2$. For \(s\geq 1\), a vector $(\mathbf a,\mathbf b)\in\mathbb N^r\times\mathbb N^m$ belongs to \(\mathcal O_s(G)\) if and only if there exist nonnegative integers \(p,q,h\) with $p+q+h=s$, and vectors
\[
\mathbf c\in\Gamma_p(G_1),
\qquad
\mathbf d\in\Gamma_q(G_2),
\]
such that
\[
\mathbf c\leq\mathbf a,
\qquad
\mathbf d\leq\mathbf b,
\]
and
\[
|\mathbf a-\mathbf c|\geq h,
\qquad
|\mathbf b-\mathbf d|\geq h.
\]
\end{proposition}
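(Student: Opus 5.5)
We prove the two implications directly from the definition of $\mathcal O_s(G)$ in Lemma~\ref{lem:ordinary-membership-part-degree}. The key point is to sort the edge multiset $\boldsymbol\lambda\in\mathbb N^{E(G)}$ into three classes according to the decomposition $E(G)=E(G_1)\sqcup E(G_2)\sqcup\{\{x_i,y_j\}\}$: edges inside $G_1$, edges inside $G_2$, and cross edges. Let $p,q,h$ denote the respective multiplicities, so that $p+q+h=s$.

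For the forward direction, suppose $(\mathbf a,\mathbf b)\geq A_G\boldsymbol\lambda$ with $\sum_e\lambda_e=s$. Let $\mathbf c\in\Gamma_p(G_1)$ be the incidence vector of the $G_1$-edges, and $\mathbf d\in\Gamma_q(G_2)$ that of the $G_2$-edges. Let $\boldsymbol\alpha\in\mathbb N^r$ and $\boldsymbol\beta\in\mathbb N^m$ be the $G_1$-side and $G_2$-side incidence vectors of the cross edges. Each cross edge contributes exactly one to $|\boldsymbol\alpha|$ and one to $|\boldsymbol\beta|$, so $|\boldsymbol\alpha|=|\boldsymbol\beta|=h$. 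The domination gives $\mathbf a\geq\mathbf c+\boldsymbol\alpha$ and $\mathbf b\geq\mathbf d+\boldsymbol\beta$. Hence $\mathbf c\leq\mathbf a$ and $|\mathbf a-\mathbf c|\geq|\boldsymbol\alpha|=h$, and symmetrically $\mathbf d\leq\mathbf b$ and $|\mathbf b-\mathbf d|\geq h$.

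For the converse, take $p,q,h,\mathbf c,\mathbf d$ as in the statement. Write $\mathbf c=A_{G_1}\boldsymbol\lambda_1$ and $\mathbf d=A_{G_2}\boldsymbol\lambda_2$. Since $\mathbf a-\mathbf c\in\mathbb N^r$ has total at least $h$, we can choose $\boldsymbol\alpha\leq\mathbf a-\mathbf c$ with $|\boldsymbol\alpha|=h$, for instance by greedily decreasing coordinates. Likewise choose $\boldsymbol\beta\leq\mathbf b-\mathbf d$ with $|\boldsymbol\beta|=h$. List the multiset of $G_1$-vertices given by $\boldsymbol\alpha$ as $u_1,\ldots,u_h$ and that given by $\boldsymbol\beta$ as $w_1,\ldots,w_h$, and pair $u_k$ with $w_k$. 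This is always possible because every pair $\{x_i,y_j\}$ is an edge of the join, so there is no matching obstruction. The resulting $h$ cross edges, together with $\boldsymbol\lambda_1$ and $\boldsymbol\lambda_2$, give $\boldsymbol\lambda\in\mathbb N^{E(G)}$ with $\sum_e\lambda_e=p+q+h=s$ and $A_G\boldsymbol\lambda=(\mathbf c+\boldsymbol\alpha,\mathbf d+\boldsymbol\beta)\leq(\mathbf a,\mathbf b)$. Therefore $(\mathbf a,\mathbf b)\in\mathcal O_s(G)$.

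There is no real obstacle. The only point needing care is the converse, where the completeness of the bipartite cross part makes any pairing of the two size-$h$ multisets valid. The edge cases $p=0$ or $q=0$ are handled by the convention $\Gamma_0(G_i)=\{\mathbf 0\}$.
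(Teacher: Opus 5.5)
Your proof is correct and follows the paper's argument. You split the $s$ edges into those inside $G_1$, those inside $G_2$, and cross edges, which yields $\mathbf c$, $\mathbf d$ and the residual bounds $|\mathbf a-\mathbf c|\geq h$, $|\mathbf b-\mathbf d|\geq h$; for the converse you pair $h$ residual occurrences on each side into cross edges, using that every $\{x_i,y_j\}$ is an edge. Naming the cross-edge incidence vectors $\boldsymbol\alpha,\boldsymbol\beta$ only makes the paper's argument more explicit.
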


\begin{proof}
Suppose first that $(\mathbf a,\mathbf b)\in\mathcal O_s(G)$. Then \((\mathbf a,\mathbf b)\) dominates the exponent vector of a product of \(s\) edge monomials of \(G\). Among these \(s\) edges, let \(p\) be the number
of edges inside \(G_1\), let \(q\) be the number of edges inside \(G_2\), and
let \(h\) be the number of cross edges between \(G_1\) and \(G_2\). Then $p+q+h=s$. The internal edges determine vectors $\mathbf c\in\Gamma_p(G_1)$, $\mathbf d\in\Gamma_q(G_2)$, with $\mathbf c\leq\mathbf a$, $\mathbf d\leq\mathbf b$. The \(h\) cross edges use exactly \(h\) vertex occurrences from the \(G_1\)-side
and \(h\) vertex occurrences from the \(G_2\)-side, so $|\mathbf a-\mathbf c|\geq h$ and $|\mathbf b-\mathbf d|\geq h$.
Conversely, suppose that such \(p,q,h,\mathbf c,\mathbf d\) exist. Choose a
product of \(p\) edge monomials of \(G_1\) with exponent vector \(\mathbf c\),
and a product of \(q\) edge monomials of \(G_2\) with exponent vector
\(\mathbf d\). Since $|\mathbf a-\mathbf c|\geq h$ and $|\mathbf b-\mathbf d|\geq h$,
we can choose \(h\) additional vertex occurrences from the residual degree on each side. Pairing these occurrences gives \(h\) cross edges, because every vertex of \(G_1\) is adjacent to every vertex of \(G_2\). Thus
\((\mathbf a,\mathbf b)\) dominates a product of \(s\) edge monomials of \(G\), and hence $(\mathbf a,\mathbf b)\in\mathcal O_s(G)$.
\end{proof}

\subsection{The defect antichain of a join}

Combining the preceding two descriptions gives a finite criterion for the
defect antichain of a join.

\begin{theorem}[Defect antichain criterion for joins]
\label{thm:defect-antichain-join}
Let $G=G_1*G_2$. For every \(s\geq 1\),
$\mathcal D_s(G)
=
\min_{\leq}
\bigl(\mathcal P_s(G)\setminus\mathcal O_s(G)\bigr)$,
where \(\mathcal P_s(G)\) is described in Proposition~\ref{prop:symbolic-region-join} and \(\mathcal O_s(G)\) is
described in Proposition~\ref{prop:ordinary-region-join}. Equivalently,
$(\mathbf a,\mathbf b)\in\mathcal D_s(G)$ if and only if the following three conditions hold:

\begin{enumerate}
\item
\[
\mathbf a\in\mathcal P_{s-|\mathbf b|}(G_1)
\quad\text{and}\quad
\mathbf b\in\mathcal P_{s-|\mathbf a|}(G_2);
\]

\item there do not exist \(p,q,h\in\mathbb N\) with \(p+q+h=s\), and vectors
\[
\mathbf c\in\Gamma_p(G_1),
\qquad
\mathbf d\in\Gamma_q(G_2),
\]
such that
\[
\mathbf c\leq\mathbf a,\qquad
\mathbf d\leq\mathbf b,
\]
and
\[
|\mathbf a-\mathbf c|\geq h,\qquad
|\mathbf b-\mathbf d|\geq h;
\]

\item no proper componentwise smaller vector
\[(\mathbf a',\mathbf b')<(\mathbf a,\mathbf b)
\]
satisfies conditions \((1)\) and \((2)\).
\end{enumerate}
\end{theorem}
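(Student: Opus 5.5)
The theorem is essentially a restatement of Definition~\ref{def:defect-antichain} applied to $G=G_1*G_2$, so the plan is to unpack that definition using the two descriptions already obtained. The first displayed identity $\mathcal D_s(G)=\min_{\leq}(\mathcal P_s(G)\setminus\mathcal O_s(G))$ holds by definition. The content is the equivalence with conditions (1)--(3). I would prove it by showing that (1) characterizes membership in $\mathcal P_s(G)$, that (2) characterizes non-membership in $\mathcal O_s(G)$, and that (3) is exactly componentwise minimality.

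First, Proposition~\ref{prop:symbolic-region-join} says that $(\mathbf a,\mathbf b)\in\mathcal P_s(G)$ if and only if $\mathbf a\in\mathcal P_{s-|\mathbf b|}(G_1)$ and $\mathbf b\in\mathcal P_{s-|\mathbf a|}(G_2)$. Here we use the convention $\mathcal P_q(G_i)=\mathbb N^{|V(G_i)|}$ for $q\le 0$. This is precisely condition (1). Second, Proposition~\ref{prop:ordinary-region-join} characterizes $\mathcal O_s(G)$ by the existence of $p,q,h,\mathbf c,\mathbf d$ with the stated dominance and residual-degree inequalities. Negating this existential statement gives condition (2) verbatim. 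Hence $(\mathbf a,\mathbf b)$ satisfies (1) and (2) if and only if $(\mathbf a,\mathbf b)\in\mathcal E_s(G)=\mathcal P_s(G)\setminus\mathcal O_s(G)$.

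Third, an element of a subset of $\mathbb N^{r+m}$ is minimal for the componentwise order exactly when it lies in the subset and no strictly smaller vector lies in it. Applying the previous paragraph to each candidate $(\mathbf a',\mathbf b')<(\mathbf a,\mathbf b)$, the statement ``$(\mathbf a',\mathbf b')\in\mathcal E_s(G)$'' is the same as ``$(\mathbf a',\mathbf b')$ satisfies (1) and (2)''. So condition (3) is exactly minimality, which completes the equivalence.

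The only point requiring care, and the main one I would double-check, is the boundary convention. When $|\mathbf b|\ge s$ (or $|\mathbf a|\ge s$), the corresponding constraint in (1) must become vacuous. This requires that the inequalities in Proposition~\ref{prop:symbolic-region-join} really reduce to $\mathcal P_q(G_i)$ with $q\le 0$ meaning no constraint. I would also treat the degenerate case where $G_1$ or $G_2$ has no edges, so that $\mathcal C(G_i)=\{\emptyset\}$ and $\Gamma_p(G_i)=\emptyset$ for $p>0$. There the formulas should still be read consistently. Finally, I would remark that finiteness of the criterion follows from the earlier box lemma, which gives $\mathcal D_s(G)\subseteq\{0,\dots,s\}^{r+m}$. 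Consequently only finitely many $p,q,h,\mathbf c,\mathbf d$ and only finitely many smaller vectors need to be checked.
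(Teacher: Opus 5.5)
Your proposal is correct and is essentially the paper's own argument: condition (1) is Proposition~\ref{prop:symbolic-region-join}, condition (2) is the negation of Proposition~\ref{prop:ordinary-region-join}, and condition (3) is componentwise minimality as in Definition~\ref{def:defect-antichain}. Your extra checks are consistent with the paper's setup and are details it leaves implicit. These are the convention $\mathcal P_q(G_i)=\mathbb N^{|V(G_i)|}$ for $q\leq 0$, the edgeless-factor case, and finiteness via the box lemma.
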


\begin{proof}
By Definition~\ref{def:defect-antichain}, the defect antichain is the set of minimal elements of $\mathcal P_s(G)\setminus\mathcal O_s(G)$.
Condition \((1)\) is exactly Proposition~\ref{prop:symbolic-region-join}.
Condition \((2)\) is the negation of Proposition~\ref{prop:ordinary-region-join}.
Condition \((3)\) is minimality with respect to the componentwise order. Hence
the stated criterion follows.
\end{proof}

\begin{remark}
Theorem~\ref{thm:defect-antichain-join} is deliberately stated as a region-level
criterion rather than as a closed formula. For arbitrary joins, edge choices
inside \(G_1\), inside \(G_2\), and across the join interact nontrivially. The
criterion is nevertheless useful because it reduces the computation of
\(\mathcal D_s(G_1*G_2)\) to the symbolic regions and edge-incidence semigroups
of the two factors.
\end{remark}

\subsection{Pure-side defect patterns}

The following consequence shows that some defect patterns of the factors persist
inside the join.

\begin{proposition}\label{prop:pure-side-defects-join}
Let $G=G_1*G_2$. Fix \(s\geq 1\). If $\mathbf a\in\mathcal D_s(G_1)$ and $|\mathbf a|\geq s$, then
$(\mathbf a,\mathbf 0)\in\mathcal D_s(G)$. Similarly, if $\mathbf b\in\mathcal D_s(G_2)$ and $|\mathbf b|\geq s$,
then $(\mathbf 0,\mathbf b)\in\mathcal D_s(G)$.
\end{proposition}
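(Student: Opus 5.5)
The plan is to verify the three conditions of Theorem~\ref{thm:defect-antichain-join} directly for the vector $(\mathbf a,\mathbf 0)$, using Propositions~\ref{prop:symbolic-region-join} and \ref{prop:ordinary-region-join}. The key observation throughout is that when the $G_2$-coordinates vanish, both the symbolic and ordinary membership conditions for the join collapse to the corresponding conditions for $G_1$, except for one additional cover inequality. The hypothesis $|\mathbf a|\geq s$ is exactly what takes care of that extra inequality. The statement for $(\mathbf 0,\mathbf b)$ follows by symmetry, interchanging $G_1$ and $G_2$.

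\emph{Step 1 (symbolic membership).} By Proposition~\ref{prop:symbolic-region-join}, $(\mathbf a,\mathbf 0)\in\mathcal P_s(G)$ holds if and only if two families of inequalities hold.
\begin{itemize}
\item $\sum_{x_i\in C_1}a_i\geq s$ for every $C_1\in\mathcal C(G_1)$. This is precisely $\mathbf a\in\mathcal P_s(G_1)$, which holds because $\mathbf a\in\mathcal D_s(G_1)$.
\item $|\mathbf a|+0\geq s$ for every $C_2\in\mathcal C(G_2)$. This is the hypothesis $|\mathbf a|\geq s$.
\end{itemize}

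\emph{Step 2 (non-membership in the ordinary region).} Suppose that $(\mathbf a,\mathbf 0)\in\mathcal O_s(G)$, and take $p,q,h,\mathbf c,\mathbf d$ as in Proposition~\ref{prop:ordinary-region-join}.
\begin{itemize}
\item From $\mathbf d\leq\mathbf 0$ we get $\mathbf d=\mathbf 0$.
\item Every element of $\Gamma_q(G_2)$ with $q\geq1$ has total degree $2q>0$, so $q=0$.
\item From $|\mathbf b-\mathbf d|=0\geq h$ we get $h=0$.
\end{itemize}
Hence $p=s$ and $\mathbf c\in\Gamma_s(G_1)$ with $\mathbf c\leq\mathbf a$. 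By definition this means $\mathbf a\in\mathcal O_s(G_1)$, which contradicts $\mathbf a\in\mathcal E_s(G_1)$.

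\emph{Step 3 (minimality).} This is the only step needing a little care. Let $(\mathbf a',\mathbf b')<(\mathbf a,\mathbf 0)$ lie in $\mathcal P_s(G)\setminus\mathcal O_s(G)$.
\begin{itemize}
\item Since $\mathbf b'\leq\mathbf 0$, we have $\mathbf b'=\mathbf 0$, and therefore $\mathbf a'<\mathbf a$.
\item Applying the first family of inequalities from Step~1 to $(\mathbf a',\mathbf 0)$ gives $\mathbf a'\in\mathcal P_s(G_1)$.
\item If $\mathbf a'$ were in $\mathcal O_s(G_1)$, then taking $p=s$, $q=h=0$, $\mathbf d=\mathbf 0$ in Proposition~\ref{prop:ordinary-region-join} would place $(\mathbf a',\mathbf 0)$ in $\mathcal O_s(G)$. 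So $\mathbf a'\notin\mathcal O_s(G_1)$.
\end{itemize}
Thus $\mathbf a'\in\mathcal E_s(G_1)$ with $\mathbf a'<\mathbf a$, contradicting the minimality of $\mathbf a$ in $\mathcal E_s(G_1)$. Notably, this step never needs the extra inequality $|\mathbf a'|\geq s$, because we only use the implication from the join regions to the $G_1$ regions.

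Combining Steps 1--3 with Theorem~\ref{thm:defect-antichain-join} shows $(\mathbf a,\mathbf 0)\in\mathcal D_s(G)$, and the symmetric argument gives $(\mathbf 0,\mathbf b)\in\mathcal D_s(G)$.
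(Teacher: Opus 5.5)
Your proof is correct and follows the same route as the paper. You obtain symbolic membership from the two families of cover inequalities of the join, exclude $\mathcal O_s(G)$ because $\mathbf b=\mathbf 0$ forces $\mathbf d=\mathbf 0$, $q=h=0$ and $p=s$, and prove minimality by reducing to minimality of $\mathbf a$ in $\mathcal E_s(G_1)$. You are in fact more explicit than the paper, which justifies $q=0$ only by ``no available degree on the $G_2$-side'' and covers the reverse implication $(\mathbf a',\mathbf 0)\in\mathcal P_s(G)\setminus\mathcal O_s(G)\Rightarrow\mathbf a'\in\mathcal P_s(G_1)\setminus\mathcal O_s(G_1)$ with a brief ``by the previous argument.''
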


\begin{proof}
We prove the first assertion; the second is analogous. Since $\mathbf a\in\mathcal D_s(G_1)$, we have
$\mathbf a\in\mathcal P_s(G_1)\setminus\mathcal O_s(G_1)$. Moreover, the assumption \(|\mathbf a|\geq s\) implies $\mathbf 0\in\mathcal P_{s-|\mathbf a|}(G_2)$ by the convention for nonpositive symbolic indices. Hence Proposition~\ref{prop:symbolic-region-join} gives $(\mathbf a,\mathbf 0)\in\mathcal P_s(G)$. If $(\mathbf a,\mathbf 0)\in\mathcal O_s(G)$,
then Proposition~\ref{prop:ordinary-region-join} forces \(q=h=0\) and \(p=s\),
because there is no available degree on the \(G_2\)-side. Thus $\mathbf a\in\mathcal O_s(G_1)$, contradicting
$\mathbf a\in\mathcal D_s(G_1)$. Therefore $(\mathbf a,\mathbf 0)\in\mathcal P_s(G)\setminus\mathcal O_s(G)$. Finally, suppose $(\mathbf a',\mathbf b')\leq(\mathbf a,\mathbf 0)$ and $(\mathbf a',\mathbf b')\in\mathcal P_s(G)\setminus\mathcal O_s(G)$.
Then \(\mathbf b'=\mathbf 0\). By the previous argument, $\mathbf a'\in\mathcal P_s(G_1)\setminus\mathcal O_s(G_1)$. Since \(\mathbf a\) is minimal in this region, \(\mathbf a'=\mathbf a\). Hence $(\mathbf a,\mathbf 0)$ is minimal, and so $(\mathbf a,\mathbf 0)\in\mathcal D_s(G)$.
\end{proof}

\begin{remark}
This proposition is included only to illustrate that the antichain language is
compatible with graph joins. The main results of the paper remain the blow-up
transfer theorem, the multigraded defect series, and the partition
classification for complete graphs.
\end{remark}

\section{Asymptotic and quasi-polynomial consequences}
\label{sec:asymptotic}

In this section we record several asymptotic consequences of the blow-up
transfer formula. The main point is that, for fixed symbolic degree \(s\), the
symbolic defect of a graph blow-up is polynomial in the blow-up parameters. We
also explain how the defect-antichain framework is compatible with the known
study of eventual quasi-polynomial behavior of symbolic defect functions
\cite{DRA, MAN, OLT}.

\subsection{Polynomiality in the blow-up parameters}

Let \(G\) be a finite simple graph with vertex set $V(G)=\{v_1,\ldots,v_r\}$. For $\mathbf n=(n_1,\ldots,n_r)\in\mathbb N^r$, let \(G^{\mathbf n}\) be the \(\mathbf n\)-blow-up of \(G\).

\begin{theorem}[Polynomiality in the blow-up parameters]
\label{thm:multivariable-polynomiality}
Fix \(s\geq 1\). Then
\[
(n_1,\ldots,n_r)
\longmapsto
\operatorname{sdefect}(I(G^{\mathbf n}),s)
\]
is a polynomial function in \(n_1,\ldots,n_r\). More precisely,
\[
\operatorname{sdefect}(I(G^{\mathbf n}),s)
=
\sum_{\mathbf a\in\mathcal D_s(G)}
\prod_{i=1}^{r}
\binom{a_i+n_i-1}{a_i}.
\]
Its total degree is at most $M_s(G)=\max\{|\mathbf a|:\mathbf a\in\mathcal D_s(G)\}$, where
$|\mathbf a|=a_1+\cdots+a_r$.
\end{theorem}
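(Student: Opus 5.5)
The plan is to deduce the statement directly from Theorem~\ref{thm:blowup-transfer-symbolic-defect}, together with an elementary identity for binomial coefficients. The key point is that $\mathcal D_s(G)$ depends only on $G$ and $s$, and not on $\mathbf n$. By Dickson's lemma, or by the box lemma $\mathcal D_s(G)\subseteq\{0,1,\ldots,s\}^r$, it is a fixed finite set. So the right-hand side of the transfer formula is a finite sum whose index set does not vary with $\mathbf n$. It therefore suffices to show that each summand $\prod_i\binom{a_i+n_i-1}{n_i-1}$ agrees, for every admissible $\mathbf n$ (all $n_i\geq 1$), with a polynomial in $n_1,\ldots,n_r$ of total degree $|\mathbf a|$.

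First I rewrite each factor using the symmetry $\binom{a_i+n_i-1}{n_i-1}=\binom{a_i+n_i-1}{a_i}$, which holds since $a_i,n_i-1\geq 0$. This gives the second displayed expression in the statement. Next I observe that
\[
\binom{a_i+n_i-1}{a_i}=\frac{n_i(n_i+1)\cdots(n_i+a_i-1)}{a_i!}.
\]
This is a polynomial in $n_i$ with rational coefficients, of degree exactly $a_i$ and leading coefficient $1/a_i!$. When $a_i=0$ it is the constant $1$, consistent with $h_0=1$.

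Taking the product over $i$ gives a polynomial in $n_1,\ldots,n_r$ of total degree $\sum_i a_i=|\mathbf a|$. Summing over the finite, $\mathbf n$-independent set $\mathcal D_s(G)$ gives a polynomial whose total degree is at most $\max_{\mathbf a\in\mathcal D_s(G)}|\mathbf a|=M_s(G)$. The convention is that the statement is vacuous, with the zero polynomial, when $\mathcal D_s(G)=\emptyset$. The inequality may be strict only in the sense that top-degree terms could in principle cancel. They do not cancel: every summand has positive leading coefficients, since $\prod_i n_i^{a_i}/a_i!$ occurs with coefficient $>0$. So the degree actually equals $M_s(G)$, but the statement only needs the upper bound.

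There is no real obstacle here. The only point needing care is to stress that the index set $\mathcal D_s(G)$ is independent of $\mathbf n$, which is exactly the content of Lemma~\ref{lem:defect-antichain-minimal-generators} and Theorem~\ref{thm:blowup-transfer-symbolic-defect}. A second point is to note that the identification of the binomial coefficient with the rising-factorial polynomial is valid for all integers $n_i\geq 1$. Hence the function of $\mathbf n$ coincides with this polynomial on the whole domain $\mathbb Z_{\geq1}^r$.
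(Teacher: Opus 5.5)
Your proposal is correct and matches the paper's proof: apply the blow-up transfer formula, use the symmetry $\binom{a_i+n_i-1}{n_i-1}=\binom{a_i+n_i-1}{a_i}$ to see that each factor is a polynomial of degree $a_i$ in $n_i$, and sum over the finite, $\mathbf n$-independent set $\mathcal D_s(G)$. Your further observation that the top-degree terms cannot cancel is also valid, and it is a sharpening the paper does not state: distinct $\mathbf a$ give distinct monomials, each with positive coefficient $1/\prod_i a_i!$, so the degree equals $M_s(G)$ whenever $\mathcal D_s(G)\neq\emptyset$.
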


\begin{proof}
By Theorem~\ref{thm:blowup-transfer-symbolic-defect},
\[
\operatorname{sdefect}(I(G^{\mathbf n}),s)
=
\sum_{\mathbf a\in\mathcal D_s(G)}
\prod_{i=1}^{r}
\binom{a_i+n_i-1}{n_i-1}.
\]
Since
\[
\binom{a_i+n_i-1}{n_i-1}
=
\binom{a_i+n_i-1}{a_i},
\]
and \(a_i\) is fixed for each \(\mathbf a\in\mathcal D_s(G)\), each factor is a
polynomial in \(n_i\) of degree \(a_i\). Therefore, each product has total degree $|\mathbf a|=a_1+\cdots+a_r$.
Since \(\mathcal D_s(G)\) is finite, the sum is a polynomial of total degree at
most \(M_s(G)\).
\end{proof}

\begin{corollary}[Uniform blow-ups]
\label{cor:uniform-blowup-polynomiality}
Let $G^{(q)}=G^{(q,\ldots,q)}$ be the uniform \(q\)-fold blow-up of \(G\). For fixed \(s\geq 1\), $q\longmapsto \operatorname{sdefect}(I(G^{(q)}),s)$ is a polynomial function of \(q\), given by
\[
\operatorname{sdefect}(I(G^{(q)}),s)
=
\sum_{\mathbf a\in\mathcal D_s(G)}
\prod_{i=1}^{r}
\binom{a_i+q-1}{a_i}.
\]
Its degree is at most \(M_s(G)\).
\end{corollary}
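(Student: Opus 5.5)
This is the diagonal specialization of Theorem~\ref{thm:multivariable-polynomiality}. I would set $n_1=\cdots=n_r=q$, so that $G^{(q)}=G^{(q,\ldots,q)}$ is an instance of the $\mathbf n$-blow-up. Theorem~\ref{thm:blowup-transfer-symbolic-defect}, in the form rewritten in Theorem~\ref{thm:multivariable-polynomiality}, then gives
\[
\operatorname{sdefect}(I(G^{(q)}),s)=\sum_{\mathbf a\in\mathcal D_s(G)}\prod_{i=1}^{r}\binom{a_i+q-1}{a_i}.
\]
The rewriting uses the identity $\binom{a_i+q-1}{q-1}=\binom{a_i+q-1}{a_i}$, which holds for $q\geq 1$.

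For polynomiality, note that $\mathcal D_s(G)$ is finite: this follows from Dickson's lemma, or from the box bound $\mathcal D_s(G)\subseteq\{0,\ldots,s\}^r$. Moreover, $\mathcal D_s(G)$ depends only on $G$ and $s$, not on $q$. Each factor
\[
\binom{a_i+q-1}{a_i}=\frac{(q)(q+1)\cdots(q+a_i-1)}{a_i!}
\]
is a polynomial in $q$ of degree exactly $a_i$, with positive leading coefficient $1/a_i!$; when $a_i=0$ it is the constant $1$. Hence each summand is a polynomial in $q$ of degree $|\mathbf a|$, and a finite sum of such polynomials is a polynomial.

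For the degree bound, every summand has degree $|\mathbf a|\leq M_s(G)$, so the sum has degree at most $M_s(G)$. In fact equality holds whenever $\mathcal D_s(G)\neq\emptyset$, since all leading coefficients are positive and cannot cancel, but only the upper bound is claimed. Alternatively, the bound follows from the total degree statement of Theorem~\ref{thm:multivariable-polynomiality}, because substituting $n_i=q$ does not increase total degree.

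The only point needing care is that the polynomial identity is asserted for the values where the blow-up makes sense, namely $q\geq 1$. The binomial rewriting is valid exactly there, so no genuine obstacle arises.
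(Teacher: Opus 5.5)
Your proposal is correct and follows the paper's route: the paper proves this corollary simply as Theorem~\ref{thm:multivariable-polynomiality} specialized at $n_1=\cdots=n_r=q$. Your additional remarks, that $\mathcal D_s(G)$ is finite and independent of $q$ and that the degree equals $M_s(G)$ when $\mathcal D_s(G)\neq\emptyset$ because the leading coefficients are positive, are valid but not needed for the stated claim.
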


\begin{proof}
This is Theorem~\ref{thm:multivariable-polynomiality} with $n_1=\cdots=n_r=q$.
\end{proof}

\subsection{Generating functions in the blow-up parameters}

The polynomiality above can be equivalently expressed through rational
generating functions. For fixed \(G\) and \(s\), define
\[
\mathcal B_{G,s}(z_1,\ldots,z_r)
=
\sum_{n_1,\ldots,n_r\geq 1}
\operatorname{sdefect}(I(G^{\mathbf n}),s)
z_1^{n_1}\cdots z_r^{n_r}.
\]

\begin{theorem}[Blow-up defect generating function]
\label{thm:blowup-defect-generating-function}
For every \(s\geq 1\),
\[
\mathcal B_{G,s}(z_1,\ldots,z_r)
=
\sum_{\mathbf a\in\mathcal D_s(G)}
\prod_{i=1}^{r}
\frac{z_i}{(1-z_i)^{a_i+1}}.
\]
In particular, \(\mathcal B_{G,s}(z_1,\ldots,z_r)\) is a rational function.
\end{theorem}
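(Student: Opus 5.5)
The plan is to substitute the transfer formula of Theorem~\ref{thm:multivariable-polynomiality} into the definition of $\mathcal B_{G,s}$ and interchange the order of summation. Since $\mathcal D_s(G)$ is finite (Dickson's lemma, or the box bound $\mathcal D_s(G)\subseteq\{0,\ldots,s\}^r$), we have
\[
\mathcal B_{G,s}(z_1,\ldots,z_r)
=
\sum_{\mathbf a\in\mathcal D_s(G)}
\sum_{n_1,\ldots,n_r\geq 1}
\prod_{i=1}^{r}\binom{a_i+n_i-1}{a_i}z_i^{n_i}.
\]
Exchanging the finite outer sum with the formal power series sum is legitimate in $\mathbb Z[[z_1,\ldots,z_r]]$. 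For fixed $\mathbf a$, the inner summand is a product of factors each depending only on $n_i$. The multiple sum therefore factors as
\[
\prod_{i=1}^r \Bigl(\sum_{n_i\geq 1}\binom{a_i+n_i-1}{a_i}z_i^{n_i}\Bigr).
\]

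The key one-variable identity is
\[
\sum_{n\geq 1}\binom{a+n-1}{a}z^n=\frac{z}{(1-z)^{a+1}}
\quad\text{for every } a\geq 0.
\]
To prove it, shift the index by $m=n-1$, which gives $z\sum_{m\geq 0}\binom{a+m}{a}z^m$. Then apply the negative binomial series $\sum_{m\geq0}\binom{a+m}{m}z^m=(1-z)^{-(a+1)}$. This series follows either from the stars-and-bars interpretation, since $\binom{a+m}{m}$ counts monomials of degree $m$ in $a+1$ variables, which is the coefficient in $(1-z)^{-(a+1)}=\bigl(\sum_k z^k\bigr)^{a+1}$, or by induction on $a$ via differentiation. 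The case $a_i=0$ deserves a check: the factor $\binom{n_i-1}{0}=1$ gives $\sum_{n\ge1}z^n=z/(1-z)$, which is consistent with the formula.

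Multiplying the factors and summing over $\mathbf a\in\mathcal D_s(G)$ gives the stated formula. Rationality is then immediate, because we have a finite sum of products of rational functions. I do not expect any genuine obstacle. The only points requiring care are the convention $n_i\geq 1$, which makes the binomial $\binom{a_i+n_i-1}{n_i-1}$ well defined and equal to $\binom{a_i+n_i-1}{a_i}$, and the justification for interchanging summations. The finiteness of $\mathcal D_s(G)$ handles the latter.
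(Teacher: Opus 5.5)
Your proposal is correct and follows the paper's proof essentially step for step: substitute the weights $\binom{a_i+n_i-1}{a_i}$ from Theorem~\ref{thm:multivariable-polynomiality}, factor the sum over $n_1,\ldots,n_r\geq 1$ into one-variable series, and apply $\sum_{n\geq 1}\binom{a+n-1}{a}z^n=z/(1-z)^{a+1}$. Your extra justifications (finiteness of $\mathcal D_s(G)$ for the interchange of sums, the derivation of the negative binomial series, and the $a_i=0$ check) fill in details that the paper treats as standard.
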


\begin{proof}
Using Theorem~\ref{thm:multivariable-polynomiality}, we have
\[
\operatorname{sdefect}(I(G^{\mathbf n}),s)
=
\sum_{\mathbf a\in\mathcal D_s(G)}
\prod_{i=1}^{r}
\binom{a_i+n_i-1}{a_i}.
\]
Hence
\[
\mathcal B_{G,s}(z_1,\ldots,z_r)
=
\sum_{\mathbf a\in\mathcal D_s(G)}
\prod_{i=1}^{r}
\left(
\sum_{n_i\geq 1}
\binom{a_i+n_i-1}{a_i}z_i^{n_i}
\right).
\]
The standard identity
\[
\sum_{n_i\geq 1}
\binom{a_i+n_i-1}{a_i}z_i^{n_i}
=
\frac{z_i}{(1-z_i)^{a_i+1}}
\]
gives the stated formula.
\end{proof}

For uniform blow-ups, define $\mathcal U_{G,s}(z)=\sum_{q\geq 1}\operatorname{sdefect}(I(G^{(q)}),s)z^q$.

\begin{corollary}[Uniform blow-up generating function]
\label{cor:uniform-blowup-generating-function}
For fixed \(G\) and \(s\), the series \(\mathcal U_{G,s}(z)\) is rational. Moreover, if \(\mathcal D_s(G)\neq\emptyset\), then its denominator divides $(1-z)^{M_s(G)+1}$.
\end{corollary}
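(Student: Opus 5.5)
The plan is to specialize Theorem~\ref{thm:blowup-defect-generating-function} to the diagonal. Setting $n_1=\cdots=n_r=q$ in $\mathcal B_{G,s}$ amounts to extracting the diagonal of a multivariate series, which is awkward. So instead I will work directly from Corollary~\ref{cor:uniform-blowup-polynomiality}:
\[
\operatorname{sdefect}(I(G^{(q)}),s)=P(q),\qquad
P(q)=\sum_{\mathbf a\in\mathcal D_s(G)}\prod_{i=1}^{r}\binom{a_i+q-1}{a_i}.
\]
Here $P$ is a polynomial in $q$ of degree at most $M_s(G)$. If $\mathcal D_s(G)=\emptyset$, then $P\equiv 0$ and $\mathcal U_{G,s}(z)=0$ is trivially rational. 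I will therefore assume $\mathcal D_s(G)\neq\emptyset$ and set $M=M_s(G)$.

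The key step is the standard fact that a polynomial of degree at most $M$ has a rational generating function with denominator $(1-z)^{M+1}$. I will express $P$ in the binomial basis
\[
P(q)=\sum_{k=0}^{M}c_k\binom{q+k-1}{k},
\]
which is possible because the polynomials $\binom{q+k-1}{k}$ have degree exactly $k$ in $q$ for $k=0,\ldots,M$, so they form a basis of the polynomials of degree at most $M$. By the identity already used in Theorem~\ref{thm:blowup-defect-generating-function},
\[
\sum_{q\ge1}\binom{q+k-1}{k}z^q=\frac{z}{(1-z)^{k+1}}.
\]
Summing over $k$ then gives
\[
\mathcal U_{G,s}(z)=\sum_{k=0}^{M}\frac{c_k\,z}{(1-z)^{k+1}}=\frac{z\sum_{k}c_k(1-z)^{M-k}}{(1-z)^{M+1}}.
\]
This shows that $\mathcal U_{G,s}$ is rational and that, in lowest terms, its denominator divides $(1-z)^{M+1}$.

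A more hands-on alternative avoids the change of basis altogether. Each product $\prod_i\binom{a_i+q-1}{a_i}$ is a polynomial in $q$ of degree $|\mathbf a|\le M$. Applying $(1-z)$ to a power series with polynomial coefficient sequence of degree $d$ yields one whose coefficients form a polynomial of degree $d-1$, apart from finitely many boundary terms. Iterating $M+1$ times kills the sequence entirely except for a polynomial numerator. I will present the basis argument, since it is cleaner.

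The only point needing care is the meaning of ``denominator divides'' when the numerator shares a factor $(1-z)$, for example if $P$ has degree smaller than $M$ due to cancellation. Since the representation above has denominator exactly $(1-z)^{M+1}$, the reduced denominator is a divisor of it in any case. I expect no real obstacle; the main work is just checking the basis-expansion claim.
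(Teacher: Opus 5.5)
Your proposal is correct and follows the paper's proof. Both arguments reduce to the uniform polynomiality corollary and then use the fact that a polynomial sequence of degree at most $M_s(G)$ has generating function with denominator dividing $(1-z)^{M_s(G)+1}$; the paper only cites this fact, while you verify it via the basis $\binom{q+k-1}{k}$ and the identity $\sum_{q\geq 1}\binom{q+k-1}{k}z^q=z/(1-z)^{k+1}$ (incidentally, the cancellation you guard against cannot occur, since every summand has positive leading coefficient $1/\prod_i a_i!$, so the reduced denominator is exactly $(1-z)^{M_s(G)+1}$).
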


\begin{proof}
By Corollary~\ref{cor:uniform-blowup-polynomiality}, $q\longmapsto \operatorname{sdefect}(I(G^{(q)}),s)$ is a polynomial function of degree at most \(M_s(G)\). The generating function of a polynomial function of degree at most \(M_s(G)\) has a denominator dividing $(1-z)^{M_s(G)+1}$.
\end{proof}

\subsection{Eventual quasi-polynomial behavior}

We now discuss the dependence on the symbolic power \(s\). Eventual quasi-polynomial behavior of symbolic defect functions has been studied in several settings, including ideals with Noetherian symbolic Rees algebra, symbolic defects of unicyclic edge ideals, and symbolic-polyhedral approaches to monomial ideals \cite{DRA, MAN, OLT}. The result below is a transfer principle: it does not assert eventual quasi-polynomiality for all graphs, but shows that any such behavior at the level of base defect
antichains is inherited by fixed blow-ups.

For a polynomial $w(\mathbf x)\in\mathbb Q[x_1,\ldots,x_r]$, define the weighted defect-antichain sum
\[
\Phi_{G,w}(s)
=
\sum_{\mathbf a\in\mathcal D_s(G)}w(\mathbf a).
\]

\begin{theorem}[Quasi-polynomial transfer]
\label{thm:quasi-polynomial-transfer}
Let \(G\) be a finite simple graph and fix $\mathbf n=(n_1,\ldots,n_r)\in\mathbb N^r$. Assume that for every polynomial $w(\mathbf x)\in\mathbb Q[x_1,\ldots,x_r]$, the function $s\longmapsto \Phi_{G,w}(s)$ is eventually quasi-polynomial. Then $s\longmapsto
\operatorname{sdefect}(I(G^{\mathbf n}),s)$
is eventually quasi-polynomial.
\end{theorem}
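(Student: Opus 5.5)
The plan is to show that, for the fixed vector $\mathbf n$, the blow-up weight is itself a polynomial in $\mathbf a$. Then the hypothesis applied to that single polynomial gives the conclusion at once.

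First, by Theorem~\ref{thm:blowup-transfer-symbolic-defect} (or equivalently Theorem~\ref{thm:multivariable-polynomiality}), for every $s\geq 1$
\[
\operatorname{sdefect}(I(G^{\mathbf n}),s)=\sum_{\mathbf a\in\mathcal D_s(G)}\prod_{i=1}^{r}\binom{a_i+n_i-1}{n_i-1}.
\]
Now fix $n_i\geq 1$ and view $\binom{a_i+n_i-1}{n_i-1}$ as a function of $a_i$. It equals
\[
w_i(x_i)=\frac{(x_i+1)(x_i+2)\cdots(x_i+n_i-1)}{(n_i-1)!},
\]
evaluated at $x_i=a_i$. This is a polynomial in $x_i$ with rational coefficients and degree $n_i-1$; when $n_i=1$ it is the constant $1$. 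The identity $\binom{a+n-1}{n-1}=\prod_{k=1}^{n-1}(a+k)/(n-1)!$ holds for every integer $a\geq 0$, which covers all coordinates of vectors in $\mathcal D_s(G)\subseteq\mathbb N^r$.

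Next, set $w(\mathbf x)=\prod_{i=1}^r w_i(x_i)\in\mathbb Q[x_1,\ldots,x_r]$. Since $\mathbf n$ is fixed, $w$ does not depend on $s$, and the display above becomes
\[
\operatorname{sdefect}(I(G^{\mathbf n}),s)=\Phi_{G,w}(s)\qquad\text{for all } s\geq 1.
\]
The hypothesis, applied to this one polynomial $w$, says that $s\mapsto\Phi_{G,w}(s)$ is eventually quasi-polynomial. Hence so is $s\mapsto\operatorname{sdefect}(I(G^{\mathbf n}),s)$. In fact only this single $w$ is needed, rather than all polynomials.

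No step is a real obstacle. The only point to check is that the binomial weight agrees with a polynomial at every point of $\mathcal D_s(G)$, including coordinates $a_i=0$ and parts with $n_i=1$. The factorial expression above handles both cases. The convention $\mathbf n\in\mathbb N^r$ should be read with $n_i\geq1$, since parts must be nonempty.
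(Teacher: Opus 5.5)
Your proposal is correct and matches the paper's proof. Both define the fixed polynomial weight $w_{\mathbf n}(\mathbf x)=\prod_i\binom{x_i+n_i-1}{n_i-1}$, identify $\operatorname{sdefect}(I(G^{\mathbf n}),s)$ with $\Phi_{G,w_{\mathbf n}}(s)$ via the blow-up transfer formula, and apply the hypothesis to that single polynomial; your explicit check that the binomial agrees with $(x_i+1)\cdots(x_i+n_i-1)/(n_i-1)!$ for all $a_i\geq 0$, and your remark that $n_i\geq 1$ is needed, are small clarifications the paper leaves implicit.
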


\begin{proof}
For the fixed vector \(\mathbf n\), define
\[
w_{\mathbf n}(\mathbf x)
=
\prod_{i=1}^{r}
\binom{x_i+n_i-1}{n_i-1}.
\]
Since each \(n_i\) is fixed, \(w_{\mathbf n}(\mathbf x)\) is a polynomial in
\(\mathbf x\). By the blow-up transfer formula,
\[
\operatorname{sdefect}(I(G^{\mathbf n}),s)
=
\sum_{\mathbf a\in\mathcal D_s(G)}
w_{\mathbf n}(\mathbf a)
=
\Phi_{G,w_{\mathbf n}}(s).
\]
By hypothesis, this function is eventually quasi-polynomial in \(s\).
\end{proof}

\begin{remark}
Theorem~\ref{thm:quasi-polynomial-transfer} is intentionally conditional. Its role is to separate the behavior of the base antichains $\mathcal D_s(G)$ from the effect of the blow-up parameters. Once weighted counts of the base
defect antichains are eventually quasi-polynomial, the symbolic defect
functions of all fixed blow-ups inherit the same type of behavior.
\end{remark}
\subsection{Eventual quasi-polynomiality for complete multipartite graphs}

We now prove eventual quasi-polynomiality for the symbolic defect functions of complete multipartite graphs. This proves eventual quasi-polynomiality for an important class of graph blow-ups.

We use the following standard fact from the theory of rational generating functions. If \(P\subseteq \mathbb R^d\) is a rational polyhedron and \(w\) is a polynomial function on \(\mathbb Z^d\), then the weighted generating function
\[
\sum_{\mathbf u\in P\cap\mathbb Z^d} w(\mathbf u)z^{\ell(\mathbf u)}
\]
is rational, whenever \(\ell\) is an integral linear form bounded below on \(P\cap\mathbb Z^d\). Consequently, its coefficient function is eventually quasi-polynomial \cite{STA, BEC}.

\begin{theorem}\label{thm7.7}
Let $J=I(K_{n_1,\ldots,n_t})$ be the edge ideal of a complete multipartite graph. Then the ordinary generating function
\[
F_J(z)=\sum_{s\geq 1}\operatorname{sdefect}(J,s)z^s
\]
is a rational function. Consequently, the function $s\longmapsto \operatorname{sdefect}(J,s)$ is eventually quasi-polynomial.
\end{theorem}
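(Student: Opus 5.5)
Combine the blow-up transfer formula with the partition classification of $\mathcal D_s(K_t)$, so that $\operatorname{sdefect}(J,s)$ becomes a weighted count of lattice points in a rational polyhedron, graded by a linear form in $s$. Then apply the standard rationality fact quoted just before the statement.

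By Corollary~\ref{cor:complete-multipartite-general} and Theorem~\ref{thm:partition-classification-complete-graph},
\[
\operatorname{sdefect}(J,s)=\sum_{\mathbf a\in\mathcal D_s(K_t)} w_{\mathbf n}(\mathbf a),\qquad w_{\mathbf n}(\mathbf x)=\prod_{i=1}^t\binom{x_i+n_i-1}{n_i-1}.
\]
Here $w_{\mathbf n}$ is a polynomial in $\mathbf x$ because $\mathbf n$ is fixed. The antichain $\mathcal D_s(K_t)$ consists of the $\mathbf a\in\mathbb N^t$ with $\max(\mathbf a)=p$ attained at least twice, $|\mathbf a|=s+p$ and $1\le p\le s-1$. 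I will encode these as lattice points $(\mathbf a,p,s)$. The condition "$\max$ attained at least twice" is not convex, so I will decompose it disjointly. For each ordered pair of indices $i<j$, let $i$ be the first index where the maximum is attained and $j$ the second. This gives the linear conditions
\[
a_i=a_j=p,\qquad a_k\le p-1 \ (k<j,\ k\ne i),\qquad a_k\le p \ (k>j).
\]
Together with $|\mathbf a|=s+p$, $p\ge1$, $p\le s-1$ and $\mathbf a\ge 0$, each pair $(i,j)$ cuts out a rational polyhedron $P_{ij}\subseteq\mathbb R^{t+2}$. Every element of $\mathcal D_s(K_t)$ lies in exactly one $P_{ij}$, namely the one indexed by its first two maximal positions.

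Summing over all $s$ gives
\[
F_J(z)=\sum_{i<j}\ \sum_{(\mathbf a,p,s)\in P_{ij}\cap\mathbb Z^{t+2}} w_{\mathbf n}(\mathbf a)\,z^{s}.
\]
The linear form $\ell(\mathbf a,p,s)=s$ is integral and bounded below (by $2$) on each $P_{ij}$. The fibers are finite: $s$ is fixed on a fiber, $p\le s-1$, and $\mathbf a$ lies in $\{0,\dots,s\}^t$. So each inner series is rational by the cited fact, and hence $F_J$ is rational.

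For the quasi-polynomial conclusion I rely on the same fact, whose cited form already gives eventual quasi-polynomiality of the coefficient function. If I need to justify that step myself, I will note that each summand has denominator a product of factors $1-z^{k}$. This holds because the recession cone of $P_{ij}$ projects to the ray $s\ge0$, so only such cyclotomic-type poles appear. Partial fractions then give an eventually quasi-polynomial coefficient function.

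The main obstacle is to apply the rationality theorem correctly. That means making the disjoint decomposition rigorous so that no point is counted twice, and checking that each fiber $\ell=s$ is finite, which is what makes the weighted series a well-defined formal power series. I will also remark that the case $t=2$ gives $F_J=0$ consistently, since then $\Lambda_{s,p}^{(2)}$ requires $2p=s+p$, i.e. $p=s$, which is excluded.
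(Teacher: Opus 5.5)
Your proposal is correct and follows essentially the same route as the paper. Both arguments combine the blow-up transfer formula with the partition classification of $\mathcal D_s(K_t)$, split the non-convex condition ``maximum attained at least twice'' disjointly into finitely many rational polyhedra carrying a polynomial weight, and then invoke the weighted lattice-point rationality theorem.

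The differences are cosmetic. The paper indexes the pieces by the exact set $M$ of maximizing coordinates and eliminates $s$ via $s=(|M|-1)p+\sum_{j\notin M}a_j$, whereas you index them by the first two maximal positions $(i,j)$. Your check that the fibers of $\ell=s$ are finite, and your remark that only factors $1-z^k$ appear in the denominators, address points that the paper's citation leaves implicit.
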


\begin{proof}
Since $K_{n_1,\ldots,n_t}=K_t^{(n_1,\ldots,n_t)}$, the blow-up transfer formula gives
\[
\operatorname{sdefect}(J,s)
=
\sum_{\mathbf a\in\mathcal D_s(K_t)}
\prod_{i=1}^t
\binom{a_i+n_i-1}{n_i-1}.
\]
By the partition classification of \(\mathcal D_s(K_t)\), a vector \(\mathbf a=(a_1,\ldots,a_t)\) belongs to \(\mathcal D_s(K_t)\) if and only if there exists an integer \(p\) such that $|\mathbf a|=s+p$, $\max(\mathbf a)=p$, the maximum value \(p\) occurs at least twice and \(1\leq p\leq s-1\).
For a subset \(M\subseteq \{1,\ldots,t\}\) with \(|M|\geq 2\), consider those
vectors for which the set of coordinates attaining the maximum value is exactly
\(M\). Then $a_i=p$ for $i\in M$, and $0\leq a_j\leq p-1$ for $j\notin M$. For such a vector,
\[
s=|\mathbf a|-p=(|M|-1)p+\sum_{j\notin M}a_j.
\]
The condition \(p\leq s-1\) is equivalent to $(|M|-2)p+\sum_{j\notin M}a_j\geq 1$. Therefore,
\[
F_J(z)=
\sum_{\substack{M\subseteq \{1,\ldots,t\}\\ |M|\geq 2}}
\sum_{\substack{p\geq 1,\;0\leq a_j\leq p-1\\
j\notin M,\; (|M|-2)p+\sum_{j\notin M}a_j\geq 1}}
\left(
\prod_{i\in M}\binom{p+n_i-1}{n_i-1}
\prod_{j\notin M}\binom{a_j+n_j-1}{n_j-1}
\right)
z^{(|M|-1)p+\sum_{j\notin M}a_j}.
\]
This expression partitions the defect vectors according to the exact set \(M\)
of coordinates attaining the maximum, so no vector is counted more than once.

For each fixed \(M\), the summation is over the integer points of a rational
polyhedral set defined by linear inequalities. The weight
\[
\prod_{i\in M}\binom{p+n_i-1}{n_i-1}
\prod_{j\notin M}\binom{a_j+n_j-1}{n_j-1}
\]
is a polynomial in the variables \(p\) and \(a_j\). Hence, by the standard
rationality theorem for weighted integer-point generating functions over
rational polyhedral sets, each summand indexed by \(M\) is a rational function
of \(z\). Since there are only finitely many subsets \(M\), \(F_J(z)\) is rational.

A rational generating function of this type eventually has quasi-polynomial
coefficients. Hence $s\longmapsto \operatorname{sdefect}(J,s)$ is eventually quasi-polynomial.
\end{proof}

\begin{corollary}
For every \(t\geq 2\), the function $s\longmapsto \operatorname{sdefect}(I(K_t),s)$ is eventually quasi-polynomial.
\end{corollary}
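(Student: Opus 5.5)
This corollary is the special case $\mathbf n=(1,\ldots,1)$ of Theorem~\ref{thm7.7}. The plan is to observe that $K_t=K_t^{(1,\ldots,1)}=K_{1,\ldots,1}$ is itself a complete multipartite graph with all parts of size one. Applying Theorem~\ref{thm7.7} with $n_1=\cdots=n_t=1$ then gives directly that $F_{I(K_t)}(z)=\sum_{s\geq1}\operatorname{sdefect}(I(K_t),s)z^s$ is rational. Consequently, its coefficient function is eventually quasi-polynomial.

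To make the argument self-contained at the level of the base graph, I would also check what the weights become. With $n_i=1$, every factor $\binom{a_i+n_i-1}{n_i-1}=\binom{a_i}{0}$ equals $1$. The decomposition used in the proof of Theorem~\ref{thm7.7} therefore reduces to counting lattice points. For each subset $M\subseteq\{1,\ldots,t\}$ with $|M|\geq2$, count the points $(p,(a_j)_{j\notin M})$ satisfying
\[
p\geq1,\qquad 0\leq a_j\leq p-1,\qquad (|M|-2)p+\sum_{j\notin M}a_j\geq1,
\]
graded by the linear form $(|M|-1)p+\sum_{j\notin M}a_j$. This agrees with Proposition~\ref{prop:base-defect-antichain-obstruction}, which says $\operatorname{sdefect}(I(K_t),s)=|\mathcal D_s(K_t)|$, together with the partition classification of Theorem~\ref{thm:partition-classification-complete-graph}.

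Each of these is the generating function of the integer points in a rational polyhedron, graded by a linear form. The only step needing care is that this linear form has finite fibers, so that the generating function is a genuine power series.
\begin{itemize}
\item If $|M|\geq2$, the form $(|M|-1)p+\sum_j a_j$ is at least $p\geq1$.
\item Since all variables are nonnegative, the form is also at least each $a_j$.
\end{itemize}
Hence the form bounds $p$ and every $a_j$, each fiber is finite, and the standard rationality theorem cited before Theorem~\ref{thm7.7} applies. Summing the finitely many rational functions over $M$ gives a rational function, so $s\mapsto\operatorname{sdefect}(I(K_t),s)$ is eventually quasi-polynomial.

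There is no real obstacle once Theorem~\ref{thm7.7} is available. The only thing to state explicitly is the identification $K_t=K_{1,\ldots,1}$, i.e.\ that the blow-up with all $n_i=1$ returns the base graph.
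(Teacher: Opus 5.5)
Your proposal is correct and is exactly the paper's argument: the corollary is obtained by specializing Theorem~\ref{thm7.7} to $n_1=\cdots=n_t=1$, using $K_t=K_t^{(1,\ldots,1)}=K_{1,\ldots,1}$. Your extra check that the grading form $(|M|-1)p+\sum_{j\notin M}a_j$ has finite fibers is a sound addition. It is in fact the condition the cited rationality theorem needs, which is slightly sharper than the ``bounded below'' phrasing used in the paper.
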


\begin{proof}
This is the special case \(n_1=\cdots=n_t=1\) of the preceding theorem.
\end{proof}

\begin{corollary}
Let \(S_{c,d}\) be the complete split graph with a clique of size \(c\) and an independent set of size \(d\). Then $s\longmapsto \operatorname{sdefect}(I(S_{c,d}),s)$ is eventually quasi-polynomial.
\end{corollary}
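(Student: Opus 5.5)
The split graph $S_{c,d}$ is a particular complete multipartite graph, so the statement should follow directly from Theorem~\ref{thm7.7}. As recorded in Section~\ref{sec:applications-blowup}, we have
\[
S_{c,d}=K_{\underbrace{1,\ldots,1}_{c\text{ times}},d}=K_{c+1}^{(1,\ldots,1,d)} .
\]
That is, $S_{c,d}$ is the blow-up of $K_t$ with $t=c+1$ and $\mathbf n=(1,\ldots,1,d)$.

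First I will check the hypotheses of Theorem~\ref{thm7.7}.
\begin{enumerate}
\item Theorem~\ref{thm7.7} requires $t\geq 2$, i.e.\ $c\geq 1$.
\item It requires each $n_i\geq 1$, i.e.\ $d\geq 1$.
\end{enumerate}
Under these conditions, Theorem~\ref{thm7.7} applied to $J=I(K_{1,\ldots,1,d})=I(S_{c,d})$ says that $F_J(z)$ is rational. Hence $s\mapsto\operatorname{sdefect}(I(S_{c,d}),s)$ is eventually quasi-polynomial.

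The only real care is the degenerate parameters, so I will treat them separately.
\begin{enumerate}
\item If $d=0$, then $S_{c,0}=K_c$. This is handled by the preceding corollary when $c\geq 2$. When $c\leq 1$ the edge ideal is zero, and the defect is identically zero, which is trivially quasi-polynomial.
\item If $c=0$, then $S_{0,d}$ has no edges, and the defect is again trivially quasi-polynomial.
\end{enumerate}

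No step is a genuine obstacle. The one point to watch is that the identification $S_{c,d}=K_{c+1}^{\mathbf n}$ is made with $\mathbf n$ having all entries positive, so that it lies within the scope of Theorem~\ref{thm7.7}; the degenerate cases above account for everything else.
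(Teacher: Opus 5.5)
Your proposal is correct and is exactly the paper's argument: identify $S_{c,d}=K_{1,\ldots,1,d}=K_{c+1}^{(1,\ldots,1,d)}$ as a complete multipartite graph and invoke Theorem~\ref{thm7.7}. Your separate treatment of the degenerate cases $d=0$ or $c=0$ is a harmless extra check that the paper leaves implicit.
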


\begin{proof}
Since \(S_{c,d}=K_{1,\ldots,1,d}\), it is a complete multipartite graph. The result follows from the theorem.
\end{proof}

\subsection{Relation with the multigraded defect series}

The preceding results can be viewed as numerical specializations of the multigraded defect series. Indeed, Theorem~\ref{thm:multigraded-defect-series}
shows that
\[
\mathcal M_{I(G^{\mathbf n}),s}(\mathbf z)
=
\sum_{\mathbf a\in\mathcal D_s(G)}
\prod_{i=1}^{r}
h_{a_i}(z_{i1},\ldots,z_{in_i}).
\]
Setting all auxiliary variables equal to \(1\) gives the numerical symbolic
defect: $\operatorname{sdefect}(I(G^{\mathbf n}),s)=\mathcal M_{I(G^{\mathbf n}),s}(1,\ldots,1)$.
Thus, the polynomiality and rationality results above are consequences of the same underlying antichain data. This reinforces the central point of the paper: the symbolic defect antichain is a finer invariant than the numerical symbolic defect function.

\section{Examples and computations}
\label{sec:examples}

In this section we illustrate the preceding results with explicit computations.
The purpose of these examples is not to introduce additional graph families, but
to show how the symbolic defect antichain of a base graph produces concrete
formulas for its blow-ups.
All computations were performed using Macaulay2 \cite{MAC}.

\subsection{Example: A blow-up of \(K_3\)}

Let $G=K_{2,3,4}$. Since $K_{2,3,4}=K_3^{(2,3,4)}$, the symbolic defects of \(G\) are computed from the defect antichains of the
base graph \(K_3\).

For \(s=2\), Corollary~\ref{cor:complete-multipartite-s2-s3} gives $\operatorname{sdefect}(I(K_{2,3,4}),2)
=2\cdot 3\cdot 4=24$.

For \(s=3\), the same corollary gives
\[
\operatorname{sdefect}(I(K_{2,3,4}),3)
=
\sum_{\substack{1\leq i<j\leq 3\\ k\neq i,j}}
\binom{n_i+1}{2}\binom{n_j+1}{2}n_k,
\]
where
\[
(n_1,n_2,n_3)=(2,3,4).
\]
Thus
\[
\operatorname{sdefect}(I(K_{2,3,4}),3)
=
\binom{3}{2}\binom{4}{2}\cdot 4
+
\binom{3}{2}\binom{5}{2}\cdot 3
+
\binom{4}{2}\binom{5}{2}\cdot 2.
\]
Hence $\operatorname{sdefect}(I(K_{2,3,4}),2)=24$ and $\operatorname{sdefect}(I(K_{2,3,4}),3)=282$.

\subsection{Example: A complete-graph blow-up giving complete split graphs}
\label{subsec:example-complete-split}

Let \(S_{c,d}\) denote the complete split graph with a clique of size \(c\) and
an independent set of size \(d\), where every clique vertex is adjacent to
every independent vertex. Then
\[S_{c,d}=K_{\underbrace{1,\ldots,1}_{c\text{ times}},d}.
\]
Thus \(S_{c,d}\) is a special blow-up of a complete graph.

For \(s=2\), Corollary~\ref{cor:complete-split-s2-s3} gives
\[\operatorname{sdefect}(I(S_{c,d}),2)
=
\binom{c}{3}+\binom{c}{2}d.
\]
For \(s=3\), the same corollary gives
\[
\operatorname{sdefect}(I(S_{c,d}),3)
=
\binom{c}{4}
+
\binom{c}{3}d
+
\binom{c}{2}(c-2+d)
+
c(c-1)\binom{d+1}{2}.
\]

For example, take \(c=3\) and \(d=4\). Then
\[
\operatorname{sdefect}(I(S_{3,4}),2)
=
\binom{3}{3}+\binom{3}{2}\cdot 4
=
1+12
=
13.
\]
Moreover,
\[
\operatorname{sdefect}(I(S_{3,4}),3)
=
\binom{3}{4}
+
\binom{3}{3}\cdot 4
+
\binom{3}{2}(3-2+4)
+
3\cdot 2\binom{5}{2}.
\]
Hence $\operatorname{sdefect}(I(S_{3,4}),3)=0+4+15+60=79$.

This example illustrates that complete split graphs are not treated as a
separate family. They arise naturally as special blow-ups of complete graphs,
and their symbolic defect formulas are obtained by specializing the general
blow-up transfer formula.

\subsection{Example: blow-ups of the odd cycle \(C_5\)}

Let $G=C_5$ with vertex set $V(C_5)=\{v_1,v_2,v_3,v_4,v_5\}$. For the odd cycle \(C_5\), the first symbolic defect occurs at \(s=3\), and $\mathcal D_3(C_5)=\{(1,1,1,1,1)\}$. Therefore, by Corollary~\ref{cor:odd-cycle-blowup-first-defect},
\[
\operatorname{sdefect}
\left(
I\left(C_5^{(n_1,n_2,n_3,n_4,n_5)}\right),3
\right)
=
n_1n_2n_3n_4n_5.
\]

For instance, if $(n_1,n_2,n_3,n_4,n_5)=(2,1,3,1,2)$, then
\[
\operatorname{sdefect}
\left(
I\left(C_5^{(2,1,3,1,2)}\right),3
\right)
=
2\cdot 1\cdot 3\cdot 1\cdot 2
=
12.
\]

This example shows that the blow-up transfer formula is not restricted to
complete base graphs. It also applies to non-complete non-bipartite base
graphs.

\subsection{Example: Uniform blow-ups}

We now illustrate the polynomiality result for uniform blow-ups. Let $G^{(q)}=G^{(q,\ldots,q)}$. First take $G=K_3$. Then $K_3^{(q)}=K_{q,q,q}$. For \(s=2\), Corollary~\ref{cor:complete-multipartite-s2-s3} gives
$\operatorname{sdefect}(I(K_{q,q,q}),2)
=
q^3$.
For \(s=3\), the same corollary gives
\[
\operatorname{sdefect}(I(K_{q,q,q}),3)
=
3q\binom{q+1}{2}^2.
\]
Thus, the first two symbolic defects are polynomial functions of the uniform
blow-up parameter \(q\), in agreement with
Corollary~\ref{cor:uniform-blowup-polynomiality}.
For example, when \(q=2\), $\operatorname{sdefect}(I(K_{2,2,2}),2)=8$, and $\operatorname{sdefect}(I(K_{2,2,2}),3)=3\cdot 2\binom{3}{2}^2=54$.

\subsection{Summary of computations}

The preceding computations are summarized in
Table~\ref{tab:example-computations}. They illustrate how different base graphs
lead to different symbolic defect patterns, while all computations are obtained
from the same transfer principle.

\begin{table}[ht]
\centering
\caption{Sample symbolic defect computations from the transfer formula.}
\label{tab:example-computations}
\begin{tabular}{llll}
\toprule
Graph \(G\) & Base graph & \(s\) & \(\operatorname{sdefect}(I(G),s)\) \\
\midrule
\(K_{2,3,4}\) & \(K_3\) & \(2\) & \(24\) \\
\(K_{2,3,4}\) & \(K_3\) & \(3\) & \(282\) \\
\(S_{3,4}\) & \(K_4\) blow-up & \(2\) & \(13\) \\
\(S_{3,4}\) & \(K_4\) blow-up & \(3\) & \(79\) \\
\(C_5^{(2,1,3,1,2)}\) & \(C_5\) & \(3\) & \(12\) \\
\(K_{2,2,2}\) & \(K_3\) & \(2\) & \(8\) \\
\(K_{2,2,2}\) & \(K_3\) & \(3\) & \(54\) \\
\bottomrule
\end{tabular}
\end{table}

These examples demonstrate three features of the method. First, complete
multipartite and complete split graphs arise as applications of the blow-up
framework rather than as isolated families. Second, odd-cycle blow-ups show that
the method applies to non-complete base graphs. Third, uniform blow-ups exhibit
the polynomial behavior predicted by the asymptotic results.
\section{Conclusion}
\label{sec:conclusion}

We introduced symbolic defect antichains as a structural refinement of symbolic defect functions for edge ideals. For a graph \(G\), the antichain $\mathcal D_s(G)=\min_{\leq}\bigl(\mathcal P_s(G)\setminus\mathcal O_s(G)\bigr)$
records the minimal exponent vectors responsible for the quotient \(I(G)^{(s)}/I(G)^s\). Thus \(\mathcal D_s(G)\) retains more information than the numerical invariant \(\operatorname{sdefect}(I(G),s)\), while remaining a
finite obstruction set.
The main result shows that this obstruction data transfers through arbitrary graph blow-ups. For \(G^{\mathbf n}\), the symbolic defect of \(I(G^{\mathbf n})\) is a weighted sum over \(\mathcal D_s(G)\), where the
weights depend only on the blow-up sizes. We also refined this numerical formula to a multigraded defect-generator series, showing that the transfer theorem describes the full multigraded distribution of minimal generators of
\(I(G^{\mathbf n})^{(s)}/I(G^{\mathbf n})^s\).
We classified the antichains \(\mathcal D_s(K_t)\) for complete graphs in all symbolic degrees using partition types. Since complete multipartite graphs are blow-ups of complete graphs, this yields all-degree formulas for their symbolic defects and multigraded defect series. Complete split graphs and blow-ups of odd
cycles then arise as further applications of the same transfer principle.
The antichain viewpoint also gives polynomiality and rational generating-function consequences in the blow-up parameters. It suggests several directions for future work, including effective computation of \(\mathcal D_s(G)\), explicit classifications for additional graph classes, and connections with symbolic polyhedra and asymptotic symbolic defect functions.

Theorem~\ref{thm7.7} gives a positive answer to the quasi-polynomiality problem for complete multipartite graphs. For arbitrary finite simple graphs, the following question remains open.

\begin{question}
Let \(G\) be a finite simple graph. Is the function
\[
s\longmapsto \operatorname{sdefect}(I(G),s)
\]
eventually quasi-polynomial? More generally, can the symbolic defect antichains
\(\mathcal D_s(G)\) be described, for all sufficiently large \(s\), by finitely many affine-linear families?
\end{question}

\section*{Statements and Declarations}

\subsection*{Funding} 
The authors received no specific funding for this work.
\subsection*{Competing interests}
The authors declare that they have no competing interests.
\subsection*{Data availability}
No datasets were generated or analyzed during the current study.

\end{document}